\documentclass[12pt,a4paper]{article}

\usepackage{amsthm,xcolor,amssymb,nicefrac,enumitem,hyperref,bm,MnSymbol}
\usepackage[margin=0.7in]{geometry}
\usepackage[sort,nocompress]{cite}

\usepackage[sort,capitalize,nameinlink,noabbrev]{cleveref}

\crefname{enumi}{item}{items}
\crefname{equation}{}{}
\crefname{figure}{Figure}{Figures}
\crefname{listing}{Source code}{Source codes}
\crefname{lstlisting}{Source code}{Source codes}
\crefname{cor}{Corollary}{Corollaries}

\hypersetup{
    colorlinks,
    linkcolor={blue!80!black},
    citecolor={green},
    urlcolor={blue!80!black}
}

\newcommand{\les}{\prec}
\newcommand{\eps}{\varepsilon}

\newcommand{\cF}{\mathcal F}

\newcommand{\cD}{\mathcal D}
\newcommand{\cE}{\mathcal E}
\newcommand{\cG}{\mathcal G}
\newcommand{\cH}{\mathcal H}

\newcommand{\cR}{\mathcal R}

\newcommand{\cU}{\mathcal U}

\newcommand{\dd}{\mathrm{d}}
\newcommand{\sfrac}[2]{\mbox{$\frac{#1}{#2}$}}

\newcommand{\IB}{\mathbb B}

\newcommand{\bX}{\mathbf X}
\newcommand{\bY}{\mathbf Y}

\newcommand{\bas}[1]{\begin{align}\begin{split} #1 \end{split}\end{align}}

\newcommand{\II}{\mathbb I}

\newcommand{\cov}{\mathrm{cov}}

\newcommand{\1}{1\hspace{-0.098cm}\mathrm{l}}

\renewcommand{\P}{{\mathbb P}}

\newcommand{\N}{{\mathbb N}}
\newcommand{\Z}{{\mathbb Z}}

\newcommand{\E}{{\mathbb E}}

\newcommand{\R}{{\mathbb R}}

\newcommand{\cK}{{\mathcal K}}
\newcommand{\cN}{{\mathcal N}}

\newcommand{\thetai}{\theta^{(i)}}
\newcommand{\sigmai}{\sigma^{(i)}}

\newcommand{\bx}{\mathbf x}
\newcommand{\by}{\mathbf y}
\newcommand{\bz}{\mathbf z}

\theoremstyle{plain}
\newtheorem{theorem}{Theorem}[section]
\newtheorem{prop}[theorem]{Proposition}
\newtheorem{lemma}[theorem]{Lemma}

\newtheorem{defi}[theorem]{Definition}

\theoremstyle{definition}

\makeatletter
\@namedef{subjclassname@2020}{%
	\textup{2020} Mathematics Subject Classification}
\makeatother

\ExplSyntaxOn

\bool_new:N \g_forexample

\NewDocumentCommand{\eg}{ o }{
\IfValueT{#1}{
\str_if_eq:noTF {fe} {#1} {
\bool_gset_true:N \g_forexample
} {\bool_gset_false:N \g_forexample}
}
\bool_if:nTF { \g_forexample } {
\bool_gset_false:N \g_forexample
for~example
}{
\bool_gset_true:N \g_forexample
for~instance
}
}

\ExplSyntaxOff

\makeatletter
\ExplSyntaxOn
\seq_new:N \g_abbrs
\prop_new:N \g_abbr_counts
\tl_new:N \l_abbr_count_tl

\NewDocumentCommand{\abbr}{m m O{#1} m m O{#4}}{
	\expandafter\newcommand\csname#3\endcsname[1][]{
		\seq_if_in:NnTF \g_abbrs {#1} {
			\prop_get:NnN \g_abbr_counts {#1} \l_abbr_count_tl
			\prop_gput:Nnx \g_abbr_counts {#1} {\int_eval:n {\l_abbr_count_tl + 1}}
			\hyperref[#1]{#1}
		} {
			\seq_gput_left:Nn \g_abbrs {#1}
			\prop_gput:Nnn \g_abbr_counts {#1} {1}
			\expandafter\gdef\csname#1@def\endcsname{#2}
			\phantomsection\label{#1}
			\str_if_eq:nnTF{##1}{}{\emph{#2}}{##1}~(\hyperref[#1]{#1})
		}
	}
	\expandafter\newcommand\csname#6\endcsname[1][]{
		\seq_if_in:NnTF \g_abbrs {#1} {
			\prop_get:NnN \g_abbr_counts {#1} \l_abbr_count_tl
			\prop_gput:Nnx \g_abbr_counts {#1} {\int_eval:n {\l_abbr_count_tl + 1}}
			\hyperref[#1]{#4}
		} {
			\expandafter\gdef\csname#1@def\endcsname{#5}
			\seq_gput_left:Nn \g_abbrs {#1}
			\prop_gput:Nnn \g_abbr_counts {#1} {1}
			\phantomsection\label{#1}
			\str_if_eq:nnTF{##1}{}{\emph{#5}}{##1}~(\hyperref[#1]{#4})
		}
	}
}

\ExplSyntaxOff
\makeatother

\abbr{ANN}{artificial neural network}{ANNs}{artificial neural networks}
\abbr{EMA}{exponential moving average}{EMAs}{exponential moving averages}
\abbr{DNN}{deep neural network}{DNNs}{deep neural networks}
\abbr{SGD}{stochastic gradient descent}{SGDs}{stochastic gradient descent}
\abbr{GD}{gradient descent}{GDs}{gradient descent}
\abbr{SOP}{stochastic optimization problem}{SOPs}{stochastic optimization problems}
\abbr{ODE}{ordinary differential equation}{ODEs}{ordinary differential equations}
\abbr{CLT}{central limit theorem}{CLTs}{central limit theorems}
\abbr{Adam}{adaptive moment estimation}{Adams}{adaptive moment estimations}
\abbr{AdamW}{\Adam\ with decoupled weight decay}{AdamWs}{\Adam\ with decoupled weight decays}
\abbr{RMSprop}{root mean square propagation}{RMSprops}{root mean square propagations}
\abbr{AI}{artificial intelligence}{AIs}{artificial intelligences}

\title{Central limit theorem for\\ the averaged Adam optimizer}

\author{Steffen Dereich$^{1}$ and Arnulf Jentzen$^{2,3}$\bigskip\\
\small{$^1$ Institute for Mathematical Stochastics, University of M\"unster,}\vspace{-0.1cm}\\
\small{Germany; e-mail: \texttt{steffen.dereich}\textcircled{\texttt{a}}\texttt{uni-muenster.de}}\smallskip\\
\small{$^2$ School of Data Science and School of Artificial Intelligence,}\vspace{-0.1cm}\\
\small{The Chinese University of Hong Kong, Shenzhen (CUHK-Shenzhen),}\vspace{-0.1cm}\\
\small{China; e-mail: \texttt{ajentzen}\textcircled{\texttt{a}}\texttt{cuhk.edu.cn}}\smallskip\\
\small{$^3$ Applied Mathematics: Institute for Analysis and Numerics,}\vspace{-0.1cm}\\
\small{University of M\"unster, Germany; e-mail: \texttt{ajentzen}\textcircled{\texttt{a}}\texttt{uni-muenster.de}}}

\date{\today}

\begin{document}

\maketitle

\begin{abstract}
In this article, we analyse convergence of the averaged Adam optimizer to an attracting zero of the Adam vector field.
We provide a central limit theorem that, in particular, quantifies exactly the speed of convergence. The  order of convergence is $n^{-1/2}$ in the number of steps of the algorithm which coincides with the order observed for classical stochastic approximation algorithms.
The covariance in the central limit theorem is given in terms of properties of the Adam algorithm in the state of the attractor.
\end{abstract}

\tableofcontents

\pagebreak

\section{Introduction}

\SGD[Stochastic gradient descent]\ optimization methods are these days the methods of choice
in the training of \AI\ systems (cf., \eg, \cite{Ruder2016arXiv,JentzenBookDeepLearning2023}).
The most popular of such \SGD\ optimization methods
is -- beyond the plain vanilla standard \SGD\ method~\cite{MR42668} -- presumably the \Adam\ optimizer proposed in 2014 by
Kingma \& Ba~\cite{KingmaBa2014_Adam}.

Despite the popularity and success of \Adam, it remains an open research problem of fundamental relevance
to establish a \emph{complete convergence analysis for \Adam}\ and it is a very active research topic
to prove rigorous error bounds for \Adam\ when applied to \SOPs.

In particular,
the recent works \cite{DereichAdamconvergence2024,DereichDoJentzen2026arXiv}
demonstrate under suitable assumptions that
the \Adam\ optimization process
$ ( \theta_n )_{ n \in \N_0 } $
satisfies that
for every number of \Adam\ steps $ n \in \N = \{ 1, 2, 3, \dots \} $ we have that the standard root mean square distance
between \Adam\ $ \theta_n $ after $ n $ steps and
the limit point $ \theta^{*} $ of \Adam\ (a zero of
the \Adam\ vector field \cite[Corollary~1.10]{DereichJentzenKassing2025arXiv}) is bounded from above by
the term $ c \sqrt{ \gamma_n } $, that is, by
a constant $ c \in (0,\infty) $ multiplied by
the square root of the learning rate (step size) $ \gamma_n $
after $ n $ steps in the sense that
\begin{equation}
\label{eq:error_Adam}
  ( \E[ \| \theta_n - \theta^{*} \|^2 ] )^{ 1 / 2 }
  \leq
  c \sqrt{ \gamma_n }
\end{equation}
(see \cite[Theorem~1.1]{DereichAdamconvergence2024}).
If the learning rates $ ( \gamma_n )_{ n \in \N } $
satisfy for all $ n \in \N $ that
$ \gamma_n = n^{ - \nu } $
for some $ \nu \in ( \nicefrac{ 1 }{ 2 }, 1 ) $,
then the root mean square error of \Adam\ on the left hand side
of \cref{eq:error_Adam} is bounded by $ c n^{ - \nicefrac{ \nu }{ 2 } } $.

For the standard \SGD\ method
the analogous statement to \cref{eq:error_Adam} (under even weaker assumptions)
is known for a long time, going back to the theory of stochastic approximations
(cf., \eg, \cite{MR42668}, \cite[Part II]{MR1082341},
\cite[Theorem~2.5]{MR3941932}, and \cite{MR4205063} and the references therein)
and in the situation where the learning rates satisfy $ \gamma_n = n^{ - \nu } $
it has also been established that the speed of convergence $ n^{ - \nicefrac{ \nu }{ 2 } } $
can essentially not be improved (cf., \eg, \cite[Theorem~1.1]{MR4055054} and the references therein).
However, for the standard \SGD\ method it is also known that
the maximal convergence speed (the highest possible asymptotic convergence speed)
$ \frac{ 1 }{ \sqrt{n} } $
can be achieved if one approximates
the limit point of the standard \SGD\ method --
a zero of the gradient (criticial point) of the objective function (the function that one intends to minimize in the considered \SOP) --
through averages of \SGD\ (usually referred to as \emph{Ruppert--Polyak averaging} \cite{Ruppert1988,Polyak1990_Automation})
and that the $ \sqrt{n} $--scaled error between
the average of \SGD\ and the critical point converges weakly
to a centered normal distribution.
We refer, \eg, to \cite{MR1167814,DerKas23_CLT,Lakshmanan2026arXiv_Survery,MoulinesBach2011_NIPS2011_40008b9a,MR4518751,MR1275360}
and the references therein for further works on this kind of error estimates
and \CLTs\ for averages of \SGD\ (cf.\ also \cref{ssec:literature} below for a short literature review).

It is the key contribution of this work to establish
this \emph{Ruppert--Polyak averaging \CLT}\ for the famous \Adam\ optimizer.
Specifically, the main result of this work -- see \cref{thm-1} below --
reveals under suitable assumptions in the notation of \cref{eq:error_Adam} that
the $ \sqrt{n} $--scaled error
\begin{equation}
\textstyle
  \sqrt{n}
  \,
  \bigl(
      \frac{
        1
      }{ n }
      [
        \sum_{ m = 1 }^n \theta_m
	  ]
    -
    \theta^{*}
  \bigr)
\end{equation}
between the average of \Adam\ $ ( \theta_m )_{ m \in \N_0 } $ and
the limit point $ \theta^{*} $ of \Adam\ converges
weakly to a centered normal distribution
as the number $ n $ of \Adam\ steps goes to infinity.
\cref{thm-1} also explicitly specifies
the covariance matrix for the limiting normal distribution
(see \cref{eq:conclusion_main_theorem} in \cref{thm-1} below).
To the best of our knowledge, this is the first work that establishes
a \CLT\ for the averaged \Adam\ optimizer.
We also refer to \cref{ssec:literature} below for a short literature review.

The remainder of this work is organized in the following way.
In \cref{sec:main_result} we recall the definition of the \Adam\ optimizer,
the \Adam\ vector field, and the \Adam\ sequence space,
we introduce a tool which we refer to as \emph{marginale correction} of \Adam,
and, thereafter, we formulate in \cref{thm-1} below the main result of this work,
which establishes under suitable assumptions the optimal convergence rate for averages of the \Adam\ optimizer.
Moreover, in \cref{sec2_1} we also provide a sketch of the proof of \cref{thm-1}.
\Cref{sec4,sec:martingale_correction,sec5,sec:proof_main_result}
are devoted to the proof of \cref{thm-1}. More specifically, a crucial ingredient
in our proof of \cref{thm-1} is the martingale correction that we introduce in \cref{def:mart_cor} below
and in \cref{sec:martingale_correction} we establish several essential properties for this martinage correction term,
particularly the local Lipschitz continuity estimate for the martingale correction in \cref{le:3451} below.
In \cref{sec4} we record several technical auxiliary results that we employ in our proof of \cref{thm-1}.
In particular,
a central idea in our proof of \cref{thm-1} is to approximate the \Adam\ vector field
(which describes the dynamics of the \Adam\ optimizer)
by a first order Taylor expansion of this \Adam\ vector field
around the limit point $ \theta^{*} $ of \Adam\ (by a linearization of this \Adam\ vector field around $ \theta^{*} $)
and in \cref{sec4} we establish convergence of the linear dynamics associated to this first order Taylor approximation
(cf. \cref{le:H-1,prop:23553})
and we also recall the convergence properties
of \Adam\ \cite[Theorem~2.5]{DereichAdamconvergence2024} that
we employ in our proof of \cref{thm-1} (cf.\ \cref{prop2352}).
In \cref{prop:98734456} in \cref{sec5} we establish a \CLT\ for a martingale
that appears due the inhomogenity in the linear dynamics associated to the first order Taylor approximation
and in \cref{sec:proof_main_result} we combine the findings from
\cref{sec:martingale_correction,sec4,sec5} to prove \cref{thm-1}.

\section{Main result: Central limit theorem (CLT) for the averaged Adam optimizer}
\label{sec:main_result}

In this section we present in \cref{thm-1} below the main convergence theorem of this work.
To do so, we first need to recall some notation. In particular,
in the following we denote by $ ( \Omega, \cF, \P ) $ a probability space,
we denote by $ d \in \N $ the dimension of the underlying \SOP,
and we recall the description of an innovation,
the \Adam\ optimizer,
the \Adam\ vector field,
the \Adam\ sequence space,
and the effective action function of \Adam\ from
\cite[Sections~2 and 3]{DereichAdamconvergence2024}.

\begin{defi}[Innovation]
\label{def:V}
A pair $(X,U)$ consisting of
\begin{enumerate}[label=(\roman*)]
\item a random variable $ U $ taking values in a measurable space $ \cU $ and
\item a product measurable mapping $ X \colon \R^d  \times \cU\to \R^d $
\end{enumerate}
is called \emph{innovation}.
\end{defi}

\begin{defi}[\Adam\ optimizer]
Consider
\begin{enumerate}[label=(\roman*)]
\item an innovation $ ( X, U ) $,
\item $ \alpha, \beta \in [0,1) $, $ \epsilon \in (0,\infty) $ with $ \alpha < \sqrt \beta $ (the \emph{damping factors}),
\item $ n_0 \in \N_0 $, $ \theta_{ n_0 }, m_{ n_0 } \in \R^d $, $ v_{ n_0 } \in [0,\infty)^d $ (the initialisation), and
\item a decreasing $(0,\infty)$-valued sequence $ ( \gamma_n )_{ n \in \N } $ (\emph{sequence of step-sizes}).
\end{enumerate}
%
%
%
%
An $\R^{d}$-valued stochastic process  $(\theta_{n})_{n\in\N_0\cap[n_{0},\infty)}$ is called \emph{\Adam\ algorithm with damping parameter $(\alpha,\beta,\epsilon)$ and step-sizes $(\gamma_{n})$ started at time $n_{0}$ in $(\theta_{0},m_{n_{0}},v_{n_{0}})$} if and only if
for every $ n \in \N \cap ( n_0, \infty) $, $ i \in \{ 1, \dots, d \} $ it holds that
\begin{equation}
  \thetai_n = \thetai_{ n - 1 } + \gamma_n \, \sigmai_n \, m^{(i)}_n,
\end{equation}
where for all $ n \in \N \cap ( n_0, \infty) $, $ i \in \{ 1, \dots, d \} $ we have
\begin{enumerate}[label=(\alph*)]
\item  $m_n=\alpha \,m_{n-1}+(1-\alpha) \,X(\theta_{n-1}, U_n)$,
\item  $v^{(i)}_n= \beta \,v^{(i)}_{n-1} +(1-\beta) \,(X^{(i)}(\theta_{n-1},U_n))^{2}$, and
\item  $\displaystyle \sigma^{(i)}_n= \bigl[ \epsilon + ( v_n^{(i)}/(1-\beta^{n}) )^{ 1 / 2 } \bigr]^{ - 1 } $
\end{enumerate}
and where $ ( U_n )_{ n \in \N \cap( n_0, \infty ) } $ is a family of independent copies of $ U $.
We call the sequence $(t_{n})_{n\in\N_{0}}$ given by
$
  t_n = \sum_{ k = 1 }^n \gamma_k
$
the \emph{training times} of the \Adam\ algorithm.
\end{defi}

The \Adam\ algorithm can be seen as a fast-slow system whose macroscopic action
is described by an \ODE\ driven by the \Adam\ vector field introduced
in \cite[Definition~2.4]{DereichAdamconvergence2024}.

\begin{defi}[\Adam\ vector field]
For a triple $(\alpha,\beta,\epsilon)$ of damping factors with $ \alpha<\sqrt\beta$ and an innovation $(X,U)$ we call the function
$
  f = ( f^{ (1) }, \dots, f^{ (d) } ) \colon \R^d \to \R^d
$
satisfying for all $ \theta \in \R^d $ that
\bas{
\textstyle
  f^{ (i) }( \theta )
  =
  ( 1 - \alpha ) \,
  \E\Bigl[
    \Bigl(
      \epsilon +
      \sqrt{
        ( 1 - \beta )
        \sum_{ k = - \infty }^0
        \beta^{ - k }
        | X^{(i)}(  \theta, U_k ) |^2
      }
    \Bigr)^{ - 1 }
    \sum_{k=-\infty}^{0} \alpha^{-k} X^{(i)}(\theta,U_k)
  \Bigr]
}
with $ ( U_k )_{ k \in - \N_0 } $
being a family of independent copies of $ U $,
the \emph{\Adam\ vector field of the innovation $ ( X, U ) $
for the damping factors $ ( \alpha, \beta, \epsilon ) $}.
\end{defi}

A useful perspective is to interpret the \Adam\ optimizer
as a recursion with delay which incorporates not only
the current innovation but the whole history of innovations \cite{DereichAdamconvergence2024}.
To employ this perspective, we recall
in the next notion
the \Adam\ sequence space and
the \Adam\ effective action function from \cite[Definition~2.3 and (19)]{DereichAdamconvergence2024}.

\begin{defi}[\Adam\ sequence space and \Adam\ effective action function]
Let $ \alpha, \beta \in [0,1) $, $ \epsilon \in (0,\infty) $
with $ \alpha < \sqrt{\beta} $ be damping factors.
\begin{enumerate}[label=(\roman*)]
\item
We denote by $ \varrho = ( \varrho_k )_{ k \in - \N_0 } \in \R^{ - \N_0 } $ the sequence
which satisfies for all $ k \in - \N_0 $ that
\begin{align}\label{def:varrho}
  \varrho_k =
  \epsilon^{ - 1 }
  ( 1 - \alpha )
  \bigl(
    \alpha^{ - k }
    +
    [ 1 - \alpha^2 / \beta ]^{ - 1 / 2 }
    \beta^{-k/2}
  \bigr)
\end{align}
and we denote by $ \ell^d_{ \varrho } $ the space of all sequences $\mathbf x=(x_{k})_{k\in-\N_{0}}\in(\R^{d})^{-\N_{0}}$ with
\bas{
\label{def:varrho2}
  \|\mathbf x\|_{\ell_{\varrho}^{d}}:=\sum_{k\in-\N_{0}} \varrho_{k}\, |x_{k}|<\infty.
}
We equip the space with the norm
$
  \left\| \cdot \right\|_{ \ell_{ \varrho }^d }
$
and call $ ( \ell^d_{ \varrho }, \left\| \cdot \right\|_{ \ell_{ \varrho }^d } ) $
the \emph{sequence space associated with the damping factors $ ( \alpha, \beta, \epsilon) $}.
\item
We denote by $ g = ( g^{ (1) }, \dots, g^{ (d) } ) \colon \ell_{ \varrho }^d \to \R^d $ function
which satisfies for all $ \mathbf x = ( x_k )_{ k \in - \N_0 } \in \ell^d_{ \varrho } $, $ i \in \{ 1, \dots, d \} $ that
\bas{
  g^{(i)}(\mathbf x)
  =
  \frac{ 1 - \alpha }{
    \epsilon
    +
    [
      (1 - \beta)
      \sum_{ k = - \infty }^0
      \beta^{ - k }
      ( x^{ (i) }_k )^2
    ]^{ 1 / 2 }
  }
  \biggl[
    \sum_{ k = - \infty }^0
    \alpha^{ - k } x^{ (i) }_k
  \biggr]
}
and we call g the \emph{effective action of the \Adam\ algorithm for the damping factors $(\alpha,\beta,\epsilon)$}.
\end{enumerate}
\end{defi}
We recall from \cite[Lemma~3.1]{DereichAdamconvergence2024} that the function $ g $
is Lipschitz continuous with Lipschitz constant $ 1 $ as a mapping on
the \Adam\ sequence space $ \ell^d_\varrho $.
We also note that the \Adam\ vector field can be expressed in terms of
the \Adam\ effective action function $ g $ by averaging
over histories with the innovation being evaluated in a frozen parameter $ \theta $.

To state our main theorem for averages of the \Adam\ optimizer,
we still need to introduce the quantities that govern the covariance
in the central limit theorem. For this we need to introduce a \emph{martingale correction term}
of the corresponding fast system. It will allow us to apply a martingale-coboundary decomposition
in the spirit of \cite{Gor69} to prove a central limit theorem.
\begin{defi}[Martingale correction]
\label{def:mart_cor}
Let $ ( \alpha, \beta, \epsilon ) $ be damping factors, let $ ( X, U ) $ be an innovation,
and let $ \varrho $ be as in \cref{def:varrho}.
For every $ \theta \in \R^d $ for which $ X(  \theta,U ) $ is integrable we define the function
\bas{
  \Psi_{\theta} \colon \ell_{ \varrho }^d \to \R^d
}
that takes $\bx=(x_k)_{k\in-\N_0}\in \ell_\varrho^d$ to
\bas{
\Psi_{\theta}(\bx):=\sum_{m=1}^{\infty} \E\bigl[g(\bX^{\bx,\theta}(m)) -f(\theta)\bigr],\label{eq:34663}}
where $f$ is the \Adam\ vector field,
where $\bX^{\bx,\theta}(m)=(\bX^{\bx,\theta}_{k}(m))_{k\in-\N_0}$ is the $\ell_{\varrho}^d$-valued random variable given by
\bas{
\bX^{\bx,\theta}_{k}(m)=\begin{cases}x_{m+k},&\text{ if }m+k\le0 , \\
X(\theta, U_{m+k}),&\text{ if } m+k>0 ,
\end{cases}
}
and where $(U_k)_{k\in\N}$ are independent copies of $U$.
$\Psi_{\theta}$ is called \emph{martingale correction in the state $\theta$}.
\end{defi}
The martingale correction is well-defined as
we will show in \cref{le:89734556} below.
Now we are in the position to state the main theorem.
\begin{theorem}[Convergence rates for the averaged \Adam\ optimizer]
\label{thm-1}
Consider
\begin{enumerate}[label=(\roman*)]
\item $ \alpha, \beta \in [0,1) $, $ \epsilon \in (0,\infty) $ with $ \alpha < \sqrt \beta $ (the \emph{damping factors}),
\item a decreasing $(0,\infty)$-valued sequence $(\gamma_{n})_{n\in\N}$ (\emph{sequence of step-sizes}), and
\item an $(X,U)$ be an innovation
\end{enumerate}
Let $ ( \theta_n )_{ n \in \N_0 } $ be an \Adam\ algorithm with innovation $ (X, U) $ and damping factors $ \alpha, \beta, \epsilon $,
let $ n_0 \in \N $ and for every $ n \in \{ n_0 + 1, \dots \} $ let
\bas{
  \bar\theta_n=\frac1{n-n_0}\sum_{k=n_0+1}^n \theta_k .
}
Moreover, denote by $f$ its \Adam\ vector field and by $\theta^{*}\in\R^{d}$ a zero of $f$. We assume the following assumptions:
\smallskip
\begin{enumerate}[label=(\alph*)]
\item \emph{Regularity of innovations.} There exist $p\in(2,\infty)$, $\cK,\delta\in(0,\infty)$,  $q\in(1-\beta,1]$
and a neighbourhood $ V $ of $ \theta^{*} $ such that
for every $ \theta, \theta' \in V $, $ i \in \{ 1, \dots, d \} $ one has
\bas{
  \E[|X(\theta,U)|^{p}]^{1/p}\le \cK,
  \qquad
  \E[|X(\theta,U)-X(\theta',U)|^{p}]^{1/p} \le \cK  \,|\theta-\theta'| ,
}
\bas{
  \text{and}
  \qquad
  \P( | X^{(i)}(\theta,U) |^2 \ge \delta )\ge q .
}
\item \emph{Stable attractor.} There exists a matrix $H\in\R^{d\times d}$  such that
\bas{
\sup\{\mathrm{Re}(\varrho):\varrho\text{ complex eigenvalue of }H\}<0
}
and there exist 
$ C \in ( 0, \infty ) $, $ \lambda \in (0,1] $ such that for all $ \theta \in V $ one has
\bas{\label{eq:237462}
|f(\theta)-H(\theta-\theta^{*})|\le C\, |\theta-\theta^*|^{1+\lambda}.
}
\item \emph{Step-sizes.} One has
\bas{\label{eq:237856-1}
\lim_{n\to\infty}\frac { \gamma_{n}- \gamma_{n+1}}{ \gamma_{n}^{2}}=0
}
and
\bas{\label{eq:237856-3}
\lim_{n\to\infty} \frac1{\sqrt n}\sum_{k=1}^n \gamma_{k}^{\frac{1+\lambda}2}=0.
}
\end{enumerate}
Let $\Gamma=(\Gamma_{i,j})_{i,j=1,\dots,d}\in\R^{d\times d}$ be the matrix with
\bas{
\Gamma_{i,j} =\E[\cD^{(i)} \cD^{(j)}],
}
where
\bas{\cD =  g(\bX^{\theta^{*}}(1))+\Psi_{\theta^{*}}(\bX^{\theta^{*}}(1))-\Psi_{\theta^{*}}(\bX^{\theta^{*}}(0)) 
}
and $\bX^{\theta^{*}}(n)=(X(\theta^{*},U_{n+k}))_{k\in-\N_{0}}$ for all $n\in\N$.
Then one has, on the event $\{\lim_{n\to\infty}\theta_{n}=\theta^{*}\}$, weak convergence
\bas{
\label{eq:conclusion_main_theorem}
\sqrt n\,(\bar \theta_{n}-\theta^{*})\Rightarrow \cN\bigl(0,H^{-1} \Gamma (H^{-1})^{\dagger}\bigr).
}
\end{theorem}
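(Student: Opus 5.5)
Our plan follows the classical Ruppert--Polyak strategy, adapted to the fast--slow structure of \Adam\ through the martingale correction $\Psi_\theta$ of \cref{def:mart_cor}.

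\textbf{Step 1: rewrite the recursion.} Let $\bX_n=(X(\theta_{n+k-1},U_{n+k}))_{k\in-\N_0}$ be the history entering step $n$. Up to the bias-correction factor $(1-\beta^n)$ and the initialisation, which contribute errors decaying geometrically in $n$, the recursion has the form $\theta_n=\theta_{n-1}+\gamma_n g(\bX_n)$. Here $\bX_n$ is built from innovations evaluated at earlier parameters, but since $g$ is $1$-Lipschitz on $\ell^d_\varrho$ and $X$ is $L^p$-Lipschitz on $V$, replacing them by $X(\theta_{n-1},\cdot)$ costs $O(\sum_{j\ge0}\varrho_{-j}|\theta_{n-1}-\theta_{n-1-j}|)=O(\gamma_n)$ times moment factors.

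\textbf{Step 2: Poisson equation.} By construction (and \cref{le:89734556}), $\Psi_\theta$ solves
\[
g(\bx)-f(\theta)=\Psi_\theta(\bx)-\E[\Psi_\theta(\bX^{\bx,\theta}(1))].
\]
Hence
\[
g(\bX_n)=f(\theta_{n-1})+\bigl[g(\bX_n)+\Psi(\bX_n)-\E_{n-1}\Psi(\bX_n)-f\bigr]+\bigl(\Psi(\bX_{n-1})-\Psi(\bX_n)\bigr)+\text{errors},
\]
where the middle bracket $\Delta M_n$ is a martingale increment. The last term telescopes. Changes of the base point $\theta$ in $\Psi_\theta$ are controlled by the local Lipschitz estimate \cref{le:3451}.

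\textbf{Step 3: linearise and average.} Write $f(\theta)=H(\theta-\theta^*)+R(\theta)$ with $|R|\le C|\theta-\theta^*|^{1+\lambda}$. Dividing the recursion by $\gamma_n$ gives
\[
H(\theta_{n-1}-\theta^*)=\gamma_n^{-1}(\theta_n-\theta_{n-1})-\Delta M_n-\bigl(\Psi(\bX_{n-1})-\Psi(\bX_n)\bigr)-R(\theta_{n-1})-\text{errors}.
\]
Summing from $n_0+1$ to $n$ and dividing by $\sqrt n$ yields the following contributions.
\begin{itemize}
\item \textbf{Step-size term.} By Abel summation, $\sum\gamma_k^{-1}(\theta_k-\theta_{k-1})$ equals boundary terms plus $\sum(\gamma_{k}^{-1}-\gamma_{k+1}^{-1})(\theta_k-\theta^*)$. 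Using \eqref{eq:237856-1}, $|\gamma_k^{-1}-\gamma_{k+1}^{-1}|=o(1)$. Combined with $|\theta_k-\theta^*|=O_P(\sqrt{\gamma_k})$, on the convergence event localised via stopping times and \cref{prop2352}, this term is $o(\sqrt n)$.
\item \textbf{Remainder term.} $\sum|R(\theta_k)|\lesssim\sum\gamma_k^{(1+\lambda)/2}$, which is $o(\sqrt n)$ by \eqref{eq:237856-3}.
\item \textbf{Telescoping term.} It is $O_P(1)$.
\item \textbf{Error terms.} These are handled in the same way as the step-size and remainder terms.
\end{itemize}
What survives is $-n^{-1/2}\sum\Delta M_k$. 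Since $H$ is invertible (its spectrum lies in the open left half-plane), we then apply $H^{-1}$.

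\textbf{Step 4: martingale CLT.} Apply \cref{prop:98734456} to $n^{-1/2}\sum\Delta M_k$. Conditional covariances converge to $\Gamma$ because $\theta_{k-1}\to\theta^*$, and $\Psi$ and $g$ are continuous in $(\theta,\bx)$ with uniform $L^p$ bounds (\cref{le:3451}). The law of $\bX_k$ approaches that of $\bX^{\theta^*}(0)$, so $\Delta M_k$ approximates $\cD$ in distribution. The Lindeberg condition follows from the $p>2$ moments. On the event $\{\theta_n\to\theta^*\}$, we use the standard localisation: restrict to $\{\theta_k\in V \text{ for } k\ge N\}$ and let $N\to\infty$, giving stable convergence.

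\textbf{Main obstacle.} We expect the hardest part to be the error control in Steps 1--2: showing that the $\theta$-dependence of $\Psi_\theta$ and the mismatch between the actual history $\bX_n$ and a frozen-parameter history produce errors of total size $o(\sqrt n)$. This requires $L^{p'}$ moment bounds on $\Psi_\theta(\bX_n)$ uniformly near $\theta^*$, together with the Lipschitz estimate \cref{le:3451}. Each step then contributes $O(\gamma_k^{1/2}\cdot\text{moment})$, and summability relies again on \eqref{eq:237856-3}.
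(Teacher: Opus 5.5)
Your Step 4 is where there is a genuine gap. \cref{prop:98734456} is a CLT for the \emph{stationary} increments $\Delta\tilde M_k$ built from $X(\theta^*,U_\cdot)$. It does not apply to your $\Delta M_k$, which are driven by the actual history $\bX(k)$. For those, the martingale CLT needs $\frac1n\sum_{k\le n}\cov(\Delta M_k\mid\cF_{k-1})\to\Gamma$ in probability, and your reasons do not give this. First, $\cov(\Delta M_k\mid\cF_{k-1})$ is a non-degenerate random function of $(\theta_{k-1},\bX(k-1))$, so it does not converge to $\Gamma$ even when $\theta_{k-1}\to\theta^*$; already in the stationary case only Ces\`aro averages converge, by the ergodic theorem. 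Second, convergence in law of $\bX(k)$ to the stationary history only gives convergence of expectations, not a law of large numbers for this non-stationary, state-dependent chain.

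The missing idea is the paper's pathwise coupling. Since $\Delta M_k$ and $\Delta\tilde M_k$ are driven by the same $U_k$, the difference $R'_k=\Delta M_k-\Delta\tilde M_k$, multiplied by predictable localising indicators, is a martingale difference. One then shows $\sum_{k\le n}\E|R'_k|^2=o(n)$ on these events, and the stationary CLT plus Birkhoff's theorem finish the argument. This $L^2$ bound is not a routine continuity statement. \cref{le:3451} controls $\Psi_{\theta_{k-1}}(\tilde\bX(k))-\Psi_{\theta^*}(\tilde\bX(k))$ only with the unbounded weight $\bigl(1+v^*(\tilde\bX^{(i)}(k))^{-1/2}\bigr)\bigl(\|\tilde\bX(k)\|_{\ell^d_\varrho}+1\bigr)$, and this weight is dependent on $|\theta_{k-1}-\theta^*|$. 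The paper decouples them by passing to $\theta_{r(k)-1}$ with $r(k)=\lfloor k-(\log k)^2\rfloor$, conditioning on $\cF_{r(k)-1}$, restricting to $\tilde\cE'_k\cap\tilde\cE''_k$, and using the FKG bound of \cref{le:345143-2}. Your phrase ``continuous in $(\theta,\bx)$ with uniform $L^p$ bounds'' skips exactly this step. More generally, you never handle the factor $v^*(\cdot)^{-1/2}$ in \cref{le:3451}. Its integrability is what the hypothesis $q>1-\beta$ is for (\cref{le:345143}), and it also appears in the base-point changes of your coboundary, which the paper handles with part (i) of \cref{prop:7356}.

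Your Step 3 (divide by $\gamma_n$ and Abel-sum) is a legitimate alternative to the paper's representation via $\bar\Pi[k,n]\to-H^{-1}$. It yields unweighted martingale and coboundary sums, so \cref{le:H-1}, the bound on $\bar\Pi[k+1,n]-\bar\Pi[k,n]$, and the weighted CLT become unnecessary. However, your justification of the step-size term is insufficient. The facts $|\gamma_k^{-1}-\gamma_{k+1}^{-1}|=o(1)$ and $\E|\theta_k-\theta^*|\lesssim\sqrt{\gamma_k}$ only give $o\bigl(\sum_{k\le n}\sqrt{\gamma_k}\bigr)$, which is not $o(\sqrt n)$, e.g.\ for $\gamma_k=k^{-0.9}$ with $\lambda=1$. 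Use monotonicity instead: $\sum_{k<n}(\gamma_{k+1}^{-1}-\gamma_k^{-1})\sqrt{\gamma_{k+1}}\le 2\gamma_n^{-1/2}$. Combine this with $n\gamma_n\to\infty$ (part (i) of \cref{le:28732456}, which needs proof) for both this sum and the boundary term $\gamma_n^{-1}(\theta_n-\theta^*)$.

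Two further corrections are needed.

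\emph{Poisson identity and martingale increment.} By \cref{le:89734556} the identity reads $\Psi_\theta(\bx)-\E[\Psi_\theta(\bX^{\bx,\theta}(1))]=\E[g(\bX^{\bx,\theta}(1))]-f(\theta)$, not with $g(\bx)$ on the right. Your bracket therefore has conditional mean $\E[g(\bX(n))\mid\cF_{n-1}]-f(\theta_{n-1})\neq0$. The correct increment is $\Delta M_n=g(\bX(n))-f(\theta_{n-1})+\Psi_{\theta_{n-1}}(\bX(n))-\Psi_{\theta_{n-1}}(\bX(n-1))$, which makes the decomposition exact. Conditionally on $\cF_{n-1}$, the actual $\bX(n)$ is distributed as $\bX^{\bx,\theta}(1)$ with $(\bx,\theta)=(\bX(n-1),\theta_{n-1})$, so the frozen-parameter replacement in Step 1 is unnecessary.

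\emph{Coboundary sum.} It is not $O_P(1)$: regrouping leaves $\sum_k\bigl[\Psi_{\theta_k}(\bX(k))-\Psi_{\theta_{k-1}}(\bX(k))\bigr]$, whose terms are $O(\gamma_k)$ times $v^*$-weighted moments. Per-step errors of order $\gamma_k^{1/2}$ outside the martingale part, as your last paragraph suggests, would not be $o(\sqrt n)$ under \cref{eq:237856-3}.
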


\subsection{Context discussion and literature review}
\label{ssec:literature}

In \cref{eq:conclusion_main_theorem} in \cref{thm-1} we establish a \CLT\ for the averaged \Adam\ optimizer.
In particular, we get from \cref{eq:conclusion_main_theorem} that averages of \Adam\ converge with
the optimal order of convergence $ \nicefrac{ 1 }{ 2 } $ to the limit point of \Adam\ (a zero of the \Adam\ vector field).
Our convergence analysis of the averages of \Adam\ iterates in \cref{thm-1} builds up on the error and convergence analysis
of \Adam\ iterates in our preliminary work \cite{DereichAdamconvergence2024}.
Beyond \cite{DereichAdamconvergence2024}, there are also a number of further works in the literature that
provide error estimates for \Adam\ and closely related methods;
cf., \eg, \cite[Theorem~1.2]{Dereichetal2025_Adam_symmetry_theorem_arXiv},
\cite[Theorem~4]{ReddiKale2019},
\cite[Theorem~3.1]{ZhangChen2022},
\cite[Theorem~4]{Defossez2022},
and the references therein.

A major difference of the above cited research findings to \cref{thm-1} and \cite{DereichAdamconvergence2024}, respectively, is that
the above named references provide upper bounds for certain error quantities associated to \Adam\ and closely related methods
that become smaller and converge to zero, respectively, if the number of steps of the optimizer converges to
infinity \emph{together with other optimizer parameters converging appropriately to infinity or zero},
such as the size of the mini-batch going to infinity and the distance of the second moment parameter $ \beta $
to one converging to zero, while \cref{thm-1} and \cite{DereichAdamconvergence2024} provide convergence of the error of \Adam\ to zero
if only the number of \Adam\ steps converge to zero. However,
\cref{thm-1} and \cite{DereichAdamconvergence2024} do not study convergence
to a zero of the gradient of the objective function
(a \emph{critical point} of the objective function)
but convergence to
a zero $ \theta^{*} $ of the \Adam\ vector field \cite[Corollary~1.10]{DereichJentzenKassing2025arXiv} (in this context
we also refer to the \emph{\Adam\ symmetry theorem}
in \cite[Theorem~1.1]{Dereichetal2025_Adam_symmetry_theorem_arXiv}).

Another key difference of the above named references
as well as \cite{DereichAdamconvergence2024} to \cref{thm-1} is, of course, that this work provide a convergence
analysis for the \emph{averaged} \Adam\ optimizer (instead of \Adam), which is the main purpose and difficulty of this work.
A further key difference is, of course, that \cref{eq:conclusion_main_theorem} in \cref{thm-1}
does not only offer an upper bound for the error but actually delivers an \CLT\ for the averaged \Adam\ optimizer.

Not for \Adam\ but for the standard \SGD\ optimizer and closely related methods there are also a number of works in the literature that establish
error analyses and \CLTs\ for averages of the considered optimizer; cf., \eg,
\cite{DerKas23_CLT}, \cite{MR4518751}, \cite{MR1275360}, \cite{MR1167814},
\cite[Theorem~2]{ZhuDong2020arXiv},
\cite[Sections~3.3 and 4.2]{MoulinesBach2011_NIPS2011_40008b9a},
and
\cite[Chapter~11]{MR1993642}.
We also refer to \cite[Theorem~18]{AhnCutkosky2024_arXiv} for upper bounds
for the gradient of the objective function evaluated at suitable \EMA\ averages of a clipped \Adam\ algorithm.
To the best of our knowledge, this article is the first work in the literature
that establishes a \CLT\ for the average of the \Adam\ optimizer.

We also note that the assumption in \cref{eq:237856-3}
ensures the sequence of step-sizes is a null sequence
in the sense that $ \lim_{ n \to \infty } \gamma_n = 0 $.
It is well known (see, \eg, \cite{DereichGraeberJentzen2024arXiv_non_convergence})
that \Adam\ as well as many other \SGD\ optimization methods
fail to be convergent at all if this assumption is not fulfilled
in the sense that $ \limsup_{ n \to \infty } \gamma_n \neq 0 $.

For numerical simulations for different types of averages of the \Adam\ algorithm
we refer to the reader to our
preliminary works \cite{DereichJentzenRiekert2024_arXiv_Averaged_Adam,JentzenKranzRiekert2024_arXiv_PADAM}.
Interestingly, in the performed numerical simulations it turns out that
equally weighted averages of the entire history of \Adam\ iterates
(as analyzed in \cref{eq:conclusion_main_theorem} in \cref{thm-1})
only reduce the optimization error compared to the unaveraged \Adam\ and the standard \SGD\ method
in the situation of strongly convex \SOPs\ (cf., \eg, \cite[Figures~1 and 5]{JentzenKranzRiekert2024_arXiv_PADAM})
but not in the situation of the training of \DNNs.
This empirical observation is not in contradiction to the statement of \cref{thm-1}
as \cref{thm-1} only asserts the convergence in \cref{eq:conclusion_main_theorem}
on the event where \Adam\ converges to $ \theta^{*} $ and, in particular,
where the entire \Adam\ optimization trajectory is bounded.
In the training of \DNNs\ it seems, however, that the optimization landscape
has usually the property that gradient flow, gradient desent, \SGD, and \Adam\ all
diverge to infinity (cf., \eg, \cite[Sections~1.2 and 3.3]{MR4243432}, \cite{GallonJentzenLindner2022_arXiv},
and \cite{NEURIPS2025_05fb0f4e} for details)
and in such a scenario equally weighted averages of the entire history of the iterates
of the considered optimizer seem to just slow down the divergence to infinity and, thereby, the decrease
of the objective function (cf., \eg, \cite[Figures~2--4 and 6--13]{JentzenKranzRiekert2024_arXiv_PADAM}).

However, if one changes the type of averages and replaces
equally weighted averages of the entire history of the \Adam\ iterates
by averages that give more weight on the recent \Adam\ iterates than the iterates
at the beginning of the \Adam\ process -- such as
averages of the last 1000 \Adam\ iterates or \EMAs\ of \Adam\ iterates,
then such averages often outperform the unaveraged \Adam\ and the standard \SGD\ methods
also in the training of \DNNs;
cf.\ \cite{DereichJentzenRiekert2024_arXiv_Averaged_Adam,JentzenKranzRiekert2024_arXiv_PADAM}.
In this context we also refer to \cite{Defazioetal2024_arXiv,AhnMagakyanCutkosky2024_arXiv}
for the design, numerical simulations, and error estimates for suitable schedule-free variants
of \SGD\ and \AdamW\ that
are related to \EMAs\ of \SGD\ and \AdamW\ iterates.

\subsection{Sketch of the proof of the main result}\label{sec2_1}

Let us sketch the proof of the main theorem (assuming that $\theta^*=0$). For illustration we first recall an approach that can be made rigorous when proving the classical \CLT\ for \SGD.
Let $(\theta_n^\mathrm{SGD})_{n\in\N_0}$ be a $\R^d$-valued process satisfying for each $n\in\N$
\bas{
\theta^\mathrm{SGD}_n= \theta^\mathrm{SGD}_{n-1}+\gamma_nZ_n^\mathrm{SGD},
}
where $(Z^\mathrm{SGD}_n)_{n\in\N}$ is a process  satisfying $\E[Z_n|\cF_{n-1}]=f^\mathrm{SGD}(\theta_{n-1})$ and $f^\mathrm{SGD}$ is equal to minus the gradient of the objective function. 
Suppose that in analogy to assumption \cref{eq:237462}
(with $\theta^*=0$) \bas{ f^\mathrm{SGD}(\theta)=H\theta +\mathcal O(|\theta|^{1+\lambda})}
as $\theta\to0$.
Then we can write
\bas{
\theta^\mathrm{SGD}_n=\theta^\mathrm{SGD}_{n-1}+\gamma_n \bigl(H\theta^\mathrm{SGD}_{n-1} + \underbrace{f^\mathrm{SGD}(\theta_{n-1}) -H \theta^\mathrm{SGD}_{n-1}}_{\text{error term}}+ \underbrace{Z_n^\mathrm{SGD}-f^\mathrm{SGD}(\theta_{n-1})}_{=: \Delta M^\mathrm{SGD}_n \text{ (mart. diff.)}}\bigr)
}
The error term will be negligible on $\{\theta_n\to 0\}$ and in the following we will informally write $\mathrm{err}$ for terms that depend on $n$ (and may change from line to line) that are asymptotically  negligible on $\{\theta_n\to 0\}$.
Using the variation by constant formula we use 
\bas{
\Pi[m,n]=\prod_{k=m+1}^{n}(\1+\gamma_{k}H)\text{ \ with \  $m,n\in\N_{0}$ satisfying $m\le n$}
}
to represent $\theta_n^\mathrm{SGD}$ as 
\bas{
\theta^\mathrm{SGD}_n=\Pi[0,n] \theta^\mathrm{SGD}_0 + \sum_{k=1}^n \gamma_k \Pi[k,n]\Delta M^\mathrm{SGD}_k+\mathrm{err}.
}
In terms of
\bas{\label{def:bar_Pi}
\bar\Pi[m,n]= \gamma_{m}\sum_{\ell=m}^{n}\Pi[m,\ell], \text{ \ for \  $m,n\in\N$ with $m\le n$}
}
(and $\gamma_0=1$)
we get with Fubini that
\bas{
\frac 1n \sum_{\ell=1}^n \theta_\ell^\mathrm{SGD}&=\frac 1n \sum_{\ell=1}^n\Bigl(\Pi[0,\ell] \theta^\mathrm{SGD}_0 + \sum_{m=1}^\ell \gamma_m \Pi[m,\ell]\Delta M^\mathrm{SGD}_m\Bigr)+\mathrm{err}\\
&=\underbrace{\frac 1n \bar  \Pi[0,n] \theta^\mathrm{SGD}_0}_{\text{error term}}  +\frac 1n \sum_{m=1}^n \bar \Pi[m,n]\Delta M^\mathrm{SGD}_m +\mathrm{err}
}
For most summands the matrix  $ \bar \Pi[m,n]$ is close to $-H^{-1}$ (we will use a result of \cite{DerKas23_CLT}
given in \cref{le:H-1} below) and one can represent
\bas{
\frac 1n \sum_{\ell=1}^n \theta_\ell^\mathrm{SGD}&=-\frac 1n \sum_{m=1}^nH^{-1} \Delta M^\mathrm{SGD}_m +\mathrm{err}
}
with an on $\{\theta_n^\mathrm{SGD}\to0\}$ negligible error term. Once one showed the negligibility of the error term the \CLT\ follows as consequence of a \CLT\ for martingale differences.

For the \Adam\ algorithm the situations is significantly more complex due to the \emph{memory inherent in the dynamical system}.
In that case we let $(Z_n)_{n\in\N}$ be the  $\R^d$-valued process  given by $Z_n^{(i)}=\sigma_n^{(i)} m_n^{(i)}$. Then
\bas{
\theta_n=\theta_{n-1}+\gamma_n Z_n= \theta_{n-1}+\gamma_n \bigl( H \theta_{n-1} + \underbrace{f(\theta_{n-1})-H\theta_{n-1}}_{\text{error term}} +\underbrace{Z_n-g(\bX(n))}_{\text{error term}}+g(\bX(n))\bigr).
} 
The first error term is controlled by using results of \cite{DereichAdamconvergence2024} in analogy to the analysis of the \SGD\ analysis. The second error term decays exponentially fast to zero and its contribution is easily controlled.
Using the constant of variation formula we get that
\bas{
\theta_n=\Pi[0,n] \theta_0 + \sum_{k=1}^n \gamma_k \Pi[k,n]g(\bX(k))+\mathrm{err} 
}
and
\bas{\label{eq:873456}
\frac 1n \sum_{\ell=1}^n \theta_\ell=\underbrace{\frac 1n \bar  \Pi[0,n] \theta_0}_{\text{error term}}  +\frac 1n \sum_{m=1}^n \bar \Pi[m,n] g(\bX(m)) +\mathrm{err}
}
The definition of the martingale correction is motivated by the fact that 
\bas{
\Delta M_n = g(\bX(n))+\Psi_{\theta_{n-1}} (\bX(n))-\Psi_{\theta_{n-1}}(\bX(n-1))
}
defines a sequence of martingale differences $(\Delta M_n)_{n\in\N}$. In view of~\cref{eq:873456} we get
\bas{
\frac 1n \sum_{\ell=1}^n \theta_\ell&=\frac 1n \sum_{m=1}^n \bar \Pi[m,n] g(\bX(m)) +\mathrm{err}\\
&= \frac 1n \sum_{m=1}^n \bar \Pi[m,n] \bigl(\Delta M_m -\Psi_{\theta_{m-1}} (\bX(m))+\Psi_{\theta_{m-1}}(\bX(m-1))\bigr) +\mathrm{err}.
}
Again the aim is to show that the terms following the martingale differences are asymptotically negligible. For this note that regrouping the terms yields that
\bas{
\sum_{m=1}^n& \bar \Pi[m,n] \bigl(-\Psi_{\theta_{m-1}} (\bX(m))+\Psi_{\theta_{m-1}}(\bX(m-1))\bigr)\\
&= \sum_{m=1}^{n-1}\bigl(\bar \Pi[m,n] (-\Psi_{\theta_{m-1}} (\bX(m))) +\bar \Pi[m+1,n] \Psi_{\theta_{m}}(\bX(m))\bigr)\\
&\qquad + \bar \Pi[1,n] \Psi_{\theta_0}(\bX(0)) - \gamma_n \Psi_{\theta_{n-1}}(\bX(n))
}
The latter two terms are easily seen to be negligible. However, to prove that the sum over $m$ is negligible is significantly harder. For this we carry out a perturbation analysis of the martingale correction in the state parameter (see \cref{le:3451}).
Once we have shown that indeed the contribution of these terms are asymptotically negligible we get that
\bas{
\frac 1n \sum_{\ell=1}^n \theta_\ell&= \frac 1n \sum_{m=1}^n \bar \Pi[m,n] \Delta M_m   +\mathrm{err}.
}
In the last (again technical step) we show that we can replace the process  $(\Delta M_n)$ by the process $(\Delta \tilde M_n)$ given by
\bas{
\Delta \tilde M_n=\Psi_{\theta^*}(\tilde\bX(n))- \Psi_{\theta^*}(\tilde\bX(n-1))+g(\tilde\bX(n)),
} 
where $(\tilde\bX(n))_{n\in\Z}$ is the stationary process with $\tilde\bX(n)=(X(U_{n+k},\theta^*))_{k\in-\N_0}$. Then the \CLT\ follows by applying a \CLT\ for stationary martingale differences onto the representation
\bas{
\frac 1n \sum_{\ell=1}^n \theta_\ell&= \frac 1n \sum_{m=1}^n \bar \Pi[m,n] \Delta \tilde M_m   +\mathrm{err}\\
&= -\frac 1n \sum_{m=1}^n H^{-1} \Delta \tilde M_m   +\mathrm{err}.
} 
The most parts of the article are devoted to the perturbation analysis of the martingale correction and the control of the (new) error terms.

\section{The martingale correction}
\label{sec:martingale_correction}

In this section, we analyse the martingale correction introduced in \cref{def:mart_cor}.
For this we denote by $ \alpha, \beta \in [0,1) $, $ \epsilon \in (0,\infty) $
fixed damping factors with $ \alpha < \sqrt\beta $.
Moreover, let $ (X,U) $ be a fixed innovation.

Consider for $\bx\in\ell_{\varrho}^{d}$ and $\theta\in\R^{d}$,
the $\ell_{\varrho}^{d}$-valued process $(\bX^{\mathbf x,\theta}(n))_{n\in\N_{0}}$ given by
\bas{
  \bX^{\bx,\theta}_{k}(n)
  =
  \begin{cases}
    x_{ n + k }, & \text{ if } n + k \le 0 ,
    \\
    X( \theta, U_{n+k} ), & \text{ if } n + k > 0
  \end{cases}
}
and denote for every $\theta\in\R^d$ for which $X(U,\theta)$
is integrable by $\Psi_\theta$ the martingale correction as introduced in \cref{def:mart_cor}.

\begin{lemma}
\label{le:89734556}
Let $\theta\in\R^d$ so that $X(U,\theta)$ is integrable.
The martingale correction $\Psi_\theta$ is well-defined by equation \cref{eq:34663}
and the following statements hold for all $ \bx\in\ell^d_\varrho $:
\bas{
\label{eq:8761245}|\Psi_{\theta}(\bx)|\le (1-\sqrt{\beta})^{-1}\bigl(\|\bx\|_{\ell_{\varrho}^{d}}+\|\varrho\|_{\ell_{1}} \,\E[|X(\theta,U)|]\bigr)
}
and
\bas{
\E\bigl[ g(\bX^{\bx,\theta}(1))+\Psi_{\theta}(\bX^{\bx,\theta}(1))-\Psi_{\theta}(\bx) \bigr] =f(\theta) \label{eq8923547},
}
where we use the notation introduced in \cref{def:mart_cor}.
\end{lemma}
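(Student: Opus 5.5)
The plan is a coupling argument: compare the process started from the history $\bx$ with a stationary copy, use the Lipschitz property of $g$ on $\ell^d_\varrho$, and sum the resulting geometric bounds.

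\emph{Stationary comparison process.} Let $(U_k)_{k\in\Z}$ be i.i.d.\ copies of $U$, where $(U_k)_{k\in\N}$ are the ones from \cref{def:mart_cor}. Define $\bY(m)=(X(\theta,U_{m+k}))_{k\in-\N_0}$. Then $\bY(m)$ has the law of the full history in the definition of $f$, so $\E[g(\bY(m))]=f(\theta)$ for every $m$.

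\emph{Pathwise comparison.} The sequences $\bX^{\bx,\theta}(m)$ and $\bY(m)$ agree in all coordinates $k>-m$. By the $1$-Lipschitz property of $g$ on $\ell^d_\varrho$ (\cite[Lemma~3.1]{DereichAdamconvergence2024}),
\[
|g(\bX^{\bx,\theta}(m))-g(\bY(m))|\le \sum_{k\le -m}\varrho_k\bigl(|x_{m+k}|+|X(\theta,U_{m+k})|\bigr).
\]
Substituting $j=m+k\le 0$ gives $\varrho_{j-m}$. From \cref{def:varrho} and $\alpha<\sqrt\beta$ we have $\varrho_{j-m}\le \beta^{m/2}\varrho_j$, since $\alpha^{m}\le\beta^{m/2}$. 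Taking expectations yields
\[
\bigl|\E[g(\bX^{\bx,\theta}(m))]-f(\theta)\bigr|\le \beta^{m/2}\bigl(\|\bx\|_{\ell^d_\varrho}+\|\varrho\|_{\ell_1}\E|X(\theta,U)|\bigr).
\]
The same estimate gives integrability of $g(\bX^{\bx,\theta}(m))$, because $g(\mathbf 0)=0$. Summing over $m\ge1$, the series in \cref{eq:34663} converges absolutely, and since $\sum_{m\ge1}\beta^{m/2}\le(1-\sqrt\beta)^{-1}$ this gives \cref{eq:8761245}. The one point to check carefully is the inequality $\varrho_{j-m}\le\beta^{m/2}\varrho_j$ term by term in \cref{def:varrho}.

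\emph{Identity \cref{eq8923547}.} Condition on $U_1$ and write $\by=\bX^{\bx,\theta}(1)$. By construction $\bX^{\by,\theta}(m)$, built from fresh shifted innovations, has the same law as $\bX^{\bx,\theta}(m+1)$ given $U_1$. Hence
\[
\Psi_\theta(\by)=\sum_{m\ge1}\E\bigl[g(\bX^{\bx,\theta}(m+1))-f(\theta)\,\big|\,U_1\bigr],
\]
which is a Markov/tower-property statement using independence of $(U_k)_{k\ge2}$ from $U_1$. Taking expectations and using Fubini (justified by the absolute bound above, together with $\E\|\by\|_{\ell^d_\varrho}<\infty$) gives
\[
\E[\Psi_\theta(\by)]=\sum_{m\ge2}\E[g(\bX^{\bx,\theta}(m))-f(\theta)].
\]
Subtracting this from $\Psi_\theta(\bx)$ leaves only the $m=1$ term, namely $\E[g(\by)]-f(\theta)$, which rearranges to the claim. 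The main technical care is making the conditional-law identification and the interchange of sum and expectation rigorous, and both follow from the geometric bound.
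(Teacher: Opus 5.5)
Your proposal is correct and follows essentially the same route as the paper. You couple $\bX^{\bx,\theta}(m)$ with the stationary history, use the $1$-Lipschitz property of $g$ to get the geometric bound $\beta^{m/2}$, and sum it. You then condition on $U_1$, identify the conditional law, and justify Fubini by the same bound to obtain the identity. Your explicit check that $\varrho_{j-m}\le\beta^{m/2}\varrho_j$ via $\alpha^m\le\beta^{m/2}$ spells out a step the paper uses without comment.
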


\begin{proof} Proof of well-definedness and \cref{eq:8761245}:
Let $(U_k)_{k\in\Z}$ be a sequence of independent copies of $U$ and let for $m\in\Z$
\bas{\bX^\theta(m)=(X(\theta,U_{k+m}))_{k\in-\N_0}.
}
Note that for all $m\in\N$, $f(\theta)=\E[g(\bX^{\theta}(m))]$ and using that  $g$ is Lipschitz continuous with constant $1$ as mapping on $\ell^d_\varrho$ by \cite{DereichAdamconvergence2024} we get that
\bas{
 \E[|g(\bX^{\bx,\theta}(m))&-g(\bX^{\theta}(m))|]\le \E[\|\bX^{\bx,\theta}(m)-\bX^{\theta}(m)\|_{\ell_\varrho^d}]\\
&\le \beta^{m/2}\, \E[\|\bx- \bX^{\theta}(0)\|_{\ell_\varrho^d}] \le  \beta^{m/2} \bigl(\|\bx\| _{\ell_\varrho^d}+ \|\varrho\|_{\ell_{1}}\, \E[|X(\theta,U)|] \bigr) .
}
Consequently,
\bas{\label{eq87235456}
\sum_{m=1}^\infty |\E[g(\bX^{\bx,\theta}(m))-f(\theta)]|\le (1-\sqrt\beta)^{-1}\bigl(\|\bx\| _{\ell^d_\varrho}+ \|\varrho\|_{\ell_{1}}\, \E[|X(\theta,U)|] \bigr).
}
This proves well-definedness of the martingale correction as well as estimate~\cref{eq:8761245}. 

Proof of~\cref{eq8923547}:
One has
\bas{
\Psi_\theta(\bx)&= \sum_{m=1}^{\infty} \E\bigl[g(\bX^{\bx,\theta}(m)) -f(\theta)\bigr]\\
&= \E\bigl[g(\bX^{\bx,\theta}(1)) -f(\theta)\bigr]+\sum_{m=2}^{\infty}  \E\bigl[\E\bigl[
g(\bX^{\bx,\theta}(m)) -f(\theta)\big|U_1\bigr]\bigr]
}
Conditionally on $U_1$ the history $\bX^{\bx,\theta}(m)$  is identically distributed as $\bX^{\bx',\theta}(m-1)$ for the $U_1$-measurable choice $\bx'=\bX^{\bx,\theta}(1)$.
Consequently,
\bas{\label{eq:4764356733}
\Psi_\theta(\bx)&= \E\bigl[g(\bX^{\bx,\theta}(1)) -f(\theta)\bigr]+\sum_{m=1}^{\infty}  \E\Bigl[ \E\bigl[
g(\bX^{\bx',\theta}(m)) -f(\theta)\bigr]\big|_{\bx'=\bX^{\bx,\theta}(1)} \Bigr].
}
Note that by \cref{eq87235456}
\bas{
\sum_{m=1}^{\infty} & \E\Bigl[ \Bigl|\E\bigl[
g(\bX^{\bx',\theta}(m)) -f(\theta)\bigr]\big|_{\bx'=\bX^{\bx,\theta}(1)} \Bigr|\Bigr]\\
&=  \E\Bigl[\sum_{m=1}^{\infty} \Bigl|\E\bigl[
g(\bX^{\bx',\theta}(m)) -f(\theta)\bigr]\big|_{\bx'=\bX^{\bx,\theta}(1)} \Bigr|\Bigr]\\
&\le \E\Bigl[(1-\sqrt\beta)^{-1}\bigl(\|\bX^{\bx,\theta}(1)\| _{\ell^d_\varrho}+ \|\varrho\|_{\ell_{1}}\, \E[|X(\theta,U)|] \bigr)\Bigr]<\infty.
}
Thus we may interchange summation and expectation on the right-hand side of~\cref{eq:4764356733} and get that
\bas{
\Psi_\theta(\bx)&= \E\bigl[g(\bX^{\bx,\theta}(1)) -f(\theta)\bigr]+\E\Bigl[ \sum_{m=1}^{\infty}  \E\bigl[
g(\bX^{\bx',\theta}(m)) -f(\theta)\bigr]\big|_{\bx'=\bX^{\bx,\theta}(1)} \Bigr]\\
&= \E\bigl[g(\bX^{\bx,\theta}(1)) -f(\theta)+\Psi_\theta (\bX^{\bx,\theta}(1))\bigr].
}
This proves \cref{eq8923547}.
%
\end{proof}
%
%

\begin{prop}\label{prop:3241}Assume the setting of \cref{thm-1} and let for every $n\in\Z$,
\bas{
\bX(n)=(\1_{\{n+k>0\}} X(\theta_{n+k-1},U_{n+k}))_{k\in-\N_0}\text{ \ and \ } 
\tilde\bX(n)=(X(\theta^*,U_{n+k}))_{k\in-\N_0}.}
Moreover, let  $(A_n)_{n\in\N}$ be a predictable sequence of events such that on every $A_n$, the random variable $X(\theta,U)$ is integrable for the choice $\theta=\theta_{n-1}$, and let for all $n\in\N$
\bas{
\Delta M_n&=\1_{A_n} \bigl(g(\bX(n))-f(\theta_{n-1})+\Psi_{\theta_{n-1}}(\bX(n))-\Psi_{\theta_{n-1}}(\bX(n-1))\bigr)\text{ \ and}\\
\Delta\tilde M_n&=g(\tilde \bX(n))+\Psi_{\theta^*}(\tilde\bX(n))-\Psi_{\theta^*}(\tilde\bX(n-1))
}
Additionally, we suppose that  $\Delta M_n$ is integrable.

Then $(\Delta M_n)_{n\in\N}$ and  $(\Delta \tilde M_n)_{n\in\Z}$ are martingale differences with respect to the filtration $(\cF_n)_{n\in\Z}$ generated by the process  $(U_n)_{n\in\Z}$. Moreover,  $(\Delta \tilde M_n)_{n\in\Z}$ is a stationary $L^2$-integrable process. 
\end{prop}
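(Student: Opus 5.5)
The plan is to reduce both martingale-difference properties to identity \cref{eq8923547} of \cref{le:89734556}, applied conditionally on $\cF_{n-1}$. The key structural observation is as follows. Conditionally on $\cF_{n-1}$, the history $\bX(n)$ has the same law as $\bX^{\bx,\theta}(1)$ with the frozen choices $\bx=\bX(n-1)$ and $\theta=\theta_{n-1}$. Indeed, the entries of $\bX(n)$ with index $k\le -1$ are exactly the entries of $\bX(n-1)$ shifted by one, and these are $\cF_{n-1}$-measurable. The only new entry is $X(\theta_{n-1},U_n)$, where $\theta_{n-1}$ is $\cF_{n-1}$-measurable and $U_n$ is independent of $\cF_{n-1}$.

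\textbf{The process $(\Delta M_n)$.} Let $n\in\N$. Since $A_n$ is predictable, $\1_{A_n}$ can be pulled out of $\E[\,\cdot\,|\cF_{n-1}]$. On $A_n$ the variable $X(\theta_{n-1},U)$ is integrable, so $\Psi_{\theta_{n-1}}$ is well defined by \cref{le:89734556}. By the freezing lemma (independence of $U_n$ from $\cF_{n-1}$), on $A_n$ we obtain
\[
\E\bigl[g(\bX(n))+\Psi_{\theta_{n-1}}(\bX(n))\,\big|\,\cF_{n-1}\bigr]
=\E\bigl[g(\bX^{\bx,\theta}(1))+\Psi_\theta(\bX^{\bx,\theta}(1))\bigr]\Big|_{\bx=\bX(n-1),\,\theta=\theta_{n-1}}.
\]
Together with \cref{eq8923547}, the right-hand side equals $f(\theta_{n-1})+\Psi_{\theta_{n-1}}(\bX(n-1))$. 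Both terms are $\cF_{n-1}$-measurable, so $\E[\Delta M_n|\cF_{n-1}]=0$.

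To apply the freezing lemma legitimately, I need the map $(\bx,\theta,u)\mapsto \Psi_\theta(\cdot)$ to be jointly measurable. I expect to get this from the product measurability of $X$ and Fubini, since each summand $\E[g(\bX^{\bx,\theta}(m))]$ is a measurable function of $(\bx,\theta)$ and the series converges absolutely by \cref{eq87235456}. The integrability needed for conditioning is exactly the assumed integrability of $\Delta M_n$, which justifies working with conditional expectations of the individual pieces. Checking these measurability and integrability points is the only delicate part of this step. I also need $\bX(n-1)\in\ell^d_\varrho$, which holds because only finitely many of its entries are nonzero.

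\textbf{The process $(\Delta\tilde M_n)$.} Here the same argument applies with $\theta=\theta^*$ fixed and $\bx=\tilde\bX(n-1)$. Now $\tilde\bX(n)$ is exactly $\bX^{\tilde\bX(n-1),\theta^*}(1)$ with the fresh variable $U_n$. Since $f(\theta^*)=0$, the martingale-difference property follows. Assumption (a) with $p>2$ gives $\E|X(\theta^*,U)|^p<\infty$, so $\|\tilde\bX(n)\|_{\ell^d_\varrho}\le\sum_k\varrho_k|X(\theta^*,U_{n+k})|$ has a finite $L^2$ norm by Minkowski's inequality, since $\varrho\in\ell_1$. Combining this with the bound \cref{eq:8761245} on $\Psi$ and the Lipschitz bound $|g(\bx)|\le\|\bx\|_{\ell^d_\varrho}$, which uses $g(0)=0$, gives $L^2$-integrability.

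\textbf{Stationarity.} $\Delta\tilde M_n$ is obtained by applying one fixed measurable functional to the shifted i.i.d. sequence $(U_{n+k})_{k\le 0}$. Because an i.i.d. sequence is invariant in law under shifts, $(\Delta\tilde M_n)_{n\in\Z}$ is stationary.
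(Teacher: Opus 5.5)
Your proposal is correct and follows the paper's proof. Both obtain $\E[\Delta M_n|\cF_{n-1}]=0$, and likewise for $\Delta\tilde M_n$ using $f(\theta^*)=0$, by freezing $(\bx,\theta)=(\bX(n-1),\theta_{n-1})$ (resp.\ $(\tilde\bX(n-1),\theta^*)$) via independence of $U_n$ from $\cF_{n-1}$ and then invoking \cref{eq8923547}. Both get $L^2$-integrability from \cref{eq:8761245} plus a bound on $g$, and stationarity from shift invariance of $(U_n)_{n\in\Z}$.

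Your extra remarks, on joint measurability of $(\bx,\theta)\mapsto\Psi_\theta(\bx)$ and on $\bX(n-1)\in\ell^d_\varrho$ by finite support, only make explicit what the paper leaves implicit. One phrase to tighten: integrability of $\Delta M_n$ does not by itself make each summand integrable. What the conditioning step actually needs is that on $A_n$ each summand is conditionally integrable given $\cF_{n-1}$, which follows from \cref{eq:8761245} and the boundedness of $g$.
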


\begin{proof} Let $n\in\N$. Using that $U_n$ is independent of $\cF_{n-1}$ and that $\1_{A_n}$ and $\bX(n-1)$ are $\cF_{n-1}$-measurable we conclude with \cref{le:89734556} that
\bas{
\E[\Delta M_n|\cF_{n-1}]&=\E\bigl[\1_{A_n}\bigl(g(\bX(n))-f(\theta_{n-1})+\Psi_{\theta_{n-1}}(\bX(n))-\Psi_{\theta_{n-1}}(\bX(n-1))\bigr)\big|\cF_{n-1}\bigr]\\
&=\1_{A_n}\,\bigl(\E\bigl[g(\bX^{\bx,\theta}(1))+\Psi_{\theta}(\bX^{\bx,\theta}(1))-\Psi_{\theta}(\bx)\bigr]\big|_{(\bx,\theta)=(\bX(n-1),\theta_{n-1})}\\
&\hspace{5cm} - f(\theta_{n-1})\bigr)=0.
}
In complete analogy it follows that for every $n\in\Z$, $\E[\Delta\tilde M_n|\cF_{n-1}]=0$.

For every $n\in\Z$, $\|\tilde \bX(n)\|_{\ell_\varrho^d}$ is $L^2$-integrable  and as consequence of \cref{eq:8761245} and the uniform boundedness of $g$ also $\Delta\tilde M_n$ is $L^2$-integrable for every $n\in\Z$. The stationarity of the martingale differences follows directly from the stationarity of $(U_n)_{n\in\Z}$.
\end{proof}
%
%
%

\begin{lemma}\label{le:523}
Let $\bx,\bx'\in\ell_{\varrho}^{d}$ and $\theta\in\R^{d}$ such that $X(U,\theta)$ is integrable.
 One has that
\bas{\label{eq:8761245-2}
|\Psi_{\theta}(\bx)-\Psi_{\theta}(\bx')|\le \sqrt\beta(1-\sqrt\beta)^{-1} \,\|\bx-\bx'\|_{\ell^d_\varrho}.
}
\end{lemma}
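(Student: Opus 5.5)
The plan is to work directly from the series definition \cref{eq:34663} and estimate the difference term by term. For fixed $m\in\N$ the two random histories $\bX^{\bx,\theta}(m)$ and $\bX^{\bx',\theta}(m)$ are built from the same innovations $(U_k)_{k\in\N}$. Their coordinates with $m+k>0$ coincide, and only the coordinates with $m+k\le 0$ differ: these are $x_{m+k}$ versus $x'_{m+k}$. Hence
\[
\Psi_\theta(\bx)-\Psi_\theta(\bx')=\sum_{m=1}^\infty \E\bigl[g(\bX^{\bx,\theta}(m))-g(\bX^{\bx',\theta}(m))\bigr].
\]
Splitting the series is legitimate because each of the two series converges absolutely, by \cref{eq87235456} in the proof of \cref{le:89734556}. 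The $f(\theta)$ terms cancel.

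Next we use that $g$ is Lipschitz with constant $1$ on $\ell^d_\varrho$, as recalled from \cite{DereichAdamconvergence2024}. This bounds the $m$-th summand by $\|\bX^{\bx,\theta}(m)-\bX^{\bx',\theta}(m)\|_{\ell^d_\varrho}$, which is deterministic and equals
\[
\sum_{k\le -m}\varrho_k\,|x_{m+k}-x'_{m+k}| = \sum_{j\in-\N_0}\varrho_{j-m}\,|x_j-x'_j|.
\]
The key step is the shift estimate $\varrho_{j-m}\le \beta^{m/2}\varrho_j$. It follows from \cref{def:varrho}: we have $\alpha^{-(j-m)}=\alpha^m\alpha^{-j}\le\beta^{m/2}\alpha^{-j}$ since $\alpha<\sqrt\beta$, and $\beta^{-(j-m)/2}=\beta^{m/2}\beta^{-j/2}$. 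This is the same inequality already used implicitly in the proof of \cref{le:89734556}. The $m$-th summand is therefore at most $\beta^{m/2}\|\bx-\bx'\|_{\ell^d_\varrho}$.

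Summing over $m\ge1$ gives $\sum_{m\ge1}\beta^{m/2}=\sqrt\beta(1-\sqrt\beta)^{-1}$, which is exactly the constant in \cref{eq:8761245-2}. The proof is essentially routine. The only points needing care are the justification of splitting the two series and the coordinatewise shift inequality for $\varrho$, which I expect to be the (minor) main obstacle.
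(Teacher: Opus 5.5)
Your proposal is correct and is essentially the paper's proof: the $f(\theta)$ terms cancel, and the $1$-Lipschitz property of $g$ bounds each summand by $\|\bX^{\bx,\theta}(m)-\bX^{\bx',\theta}(m)\|_{\ell^d_\varrho}\le\beta^{m/2}\|\bx-\bx'\|_{\ell^d_\varrho}$, so the geometric series gives the constant $\sqrt\beta(1-\sqrt\beta)^{-1}$. You additionally spell out the weight-shift inequality $\varrho_{j-m}\le\beta^{m/2}\varrho_j$ (which follows from $\alpha<\sqrt\beta$) and why the two series may be split; the paper uses both without comment.
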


\begin{proof}
One has that
\bas{
|\Psi_{\theta}(\bx)-\Psi_{\theta}(\bx')|&=\Bigl|  \sum_{k=1}^{\infty} \E\bigl[ g(\bX^{\bx,\theta}(k))- f(\theta)\bigr]  - \sum_{m=1}^\infty  \E\bigl[g(\bX^{\bx',\theta}(k))-f(\theta)\bigr]\Bigr|\\
&=\Bigl| \sum_{k=1}^{\infty} \E\bigl[g(\bX^{\bx,\theta}(k))-g(\bX^{\bx',\theta}(k))\bigr]\Bigr|\\
&\le \sum_{k=1}^{\infty} \E\bigl[\bigl|g(\bX^{\bx,\theta}(k))-g(\bX^{\bx',\theta}(k))\bigr|\bigr]\\
&\le  \sum_{k=1}^{\infty} \beta^{k/2}\,\|\bx-\bx'\|_{\ell^d_\varrho}\le \sqrt\beta(1-\sqrt\beta)^{-1} \,\|\bx-\bx'\|_{\ell^d_\varrho}.
}
\end{proof}

In the remainder of this section, we develop an estimate for $ | \Psi_\theta(\bx) - \Psi_{\theta'}(\bx) | $ which we will need to control perturbations in the state parameter. It is the most technical part of the article. Before we provide the crucial estimate we develop an estimate that only applies to the one-dimensional ($d=1$) setting. In the multivariate setting the inequality will be applied  component-wise. For this we denote by $\ell_\varrho$ and $\|\cdot\|_{\ell_\varrho}$ the one-dimensional versions of $\ell_\varrho^d$ and $\|\cdot\|_{\ell_\varrho^d}$.

\begin{lemma}\label{le:2564}
Let, for $\bz\in\ell_{\varrho}$,
\bas{\label{def:v_star}
v^{*}(\bz)=\inf\Bigl\{(1-\beta)\sum_{k\in-\N_{0}\backslash\{r\}} \beta^{-k} z_{k}^{2}:r\in-\N_{0}\Bigr\}
}
and let $\by,\by',\bx,\bx'\in\ell_{\varrho}$ such that
\bas{ \{k\in-\N_{0}: y_{k}\not=0\text{ or } y'_{k}\not=0\}\cap \{k\in-\N_{0}: x_{k}\not=0\text{ or } x'_{k}\not=0\}=\emptyset.
}
Then one has
\bas{
|g(\mathbf y&+\bx)-g(\mathbf y+\bx')-(g(\mathbf y'+\bx)-g(\mathbf y'+\bx'))|\\
  &\le \frac{1-\alpha^2/\beta}{(1-\alpha)^2} \Bigl(2+\frac{3}{ \sqrt{\nu^{*}}}\Bigr) 
\|\by-\by'\|_{\ell_{\varrho}}
  \| \bx-\bx'\|_{\ell_{\varrho}},
}
where 
\bas{
\nu^{*}:=(v^{*}(\by)\wedge v^{*}(\by'))+ (v^{*}(\bx)\wedge v^{*}(\bx')).
}
\end{lemma}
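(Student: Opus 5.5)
The plan is to reduce everything to a smooth two-parameter function and use the disjointness of supports to make $g(\by+\bx)$ split into a $\by$-part and an $\bx$-part. Write
\[
a(\bz)=\sum_{k\in-\N_0}\alpha^{-k}z_k,\qquad V(\bz)=(1-\beta)\sum_{k\in-\N_0}\beta^{-k}z_k^2,\qquad \varphi(v)=(\epsilon+\sqrt v)^{-1}.
\]
Then $g(\bz)=(1-\alpha)a(\bz)\varphi(V(\bz))$. Because the supports of $\by,\by'$ and of $\bx,\bx'$ are disjoint, one has $a(\by+\bx)=a(\by)+a(\bx)$ and $V(\by+\bx)=V(\by)+V(\bx)$. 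So each of the four terms has the form $G(\by,\bx)=(1-\alpha)(a(\by)+a(\bx))\,\varphi(V(\by)+V(\bx))$, and the quantity to bound is a mixed second difference of $G$.

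\textbf{Telescoping one coordinate at a time.} The lower bound on the variance is encoded by $v^*$, which is an infimum over deleting \emph{one} coordinate. So I would not interpolate linearly between $\by$ and $\by'$. Instead I would enumerate $-\N_0$ and pass from $\by'$ to $\by$ one coordinate $r$ at a time, and likewise from $\bx'$ to $\bx$ one coordinate $j$ at a time. Every intermediate sequence then agrees with $\by$ or $\by'$ off its current coordinate $r$. Hence its $V$-value is at least $v^*(\by)\wedge v^*(\by')$, and similarly on the $\bx$ side. The mixed difference becomes a double sum over $(r,j)$ of elementary mixed differences in which only $y_r$ and $x_j$ vary. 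Convergence of these telescoping sums follows from the Lipschitz continuity of $g$ on $\ell_\varrho$ \cite{DereichAdamconvergence2024}.

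\textbf{The elementary mixed difference.} Fix $(r,j)$ and write $y_r=s$, $x_j=t$, with all other coordinates frozen. The elementary difference is then $\int\!\!\int \partial_s\partial_t F\,\dd s\,\dd t$ over $s\in[y'_r,y_r]$, $t\in[x'_j,x_j]$, where $F(s,t)=(1-\alpha)(A+\alpha^{-r}s+B+\alpha^{-j}t)\,\varphi(v(s)+w(t))$. Here $v(s)=v_0+(1-\beta)\beta^{-r}s^2$ and $w(t)=w_0+(1-\beta)\beta^{-j}t^2$, with $v_0\ge v^*(\by)\wedge v^*(\by')$ and $w_0\ge v^*(\bx)\wedge v^*(\bx')$. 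Differentiating gives three terms:
\[
\alpha^{-j}\varphi' v',\qquad \alpha^{-r}\varphi' w',\qquad (a+b)\,\varphi'' v' w'.
\]
I would use $|\varphi'(u)|=\tfrac{1}{2\sqrt u(\epsilon+\sqrt u)^2}$ and $|\varphi''(u)|\le \tfrac{3}{4u^{3/2}(\epsilon+\sqrt u)^2}$. I would bound $|v'|=2(1-\beta)\beta^{-r}|s|\le 2\sqrt{(1-\beta)\beta^{-r}}\sqrt{v(s)}$, and similarly for $|w'|$. For $|a+b|$ I would use Cauchy--Schwarz, $|a(\bz)|\le (1-\alpha^2/\beta)^{-1/2}(1-\beta)^{-1/2}\sqrt{V(\bz)}$.

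\textbf{Collecting constants.} The target is a bound of the form
\[
C\,\Bigl(\alpha^{-r}+\sqrt{\beta^{-r}/(1-\alpha^2/\beta)}\Bigr)\Bigl(\alpha^{-j}+\sqrt{\beta^{-j}/(1-\alpha^2/\beta)}\Bigr)\cdot\frac{1}{\epsilon^2}\Bigl(2+\frac{3}{\sqrt{\nu^*}}\Bigr).
\]
Since $\varrho_k=\epsilon^{-1}(1-\alpha)(\alpha^{-k}+(1-\alpha^2/\beta)^{-1/2}\beta^{-k/2})$, multiplying by $|y_r-y'_r|\,|x_j-x'_j|$ and summing over $(r,j)$ turns the two weight factors into $\|\by-\by'\|_{\ell_\varrho}\|\bx-\bx'\|_{\ell_\varrho}$. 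The $\epsilon^{-2}$ is cancelled by the $\epsilon$'s inside $\varrho$, and a factor $(1-\alpha)^{-2}$ and a factor $1-\alpha^2/\beta$ remain.

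The terms with $\varphi'$ are harmless. The factor $\sqrt{v}$ coming from $v'$ cancels the $\sqrt u$ in $\varphi'$, giving at most $\epsilon^{-2}$, which accounts for the ``$2$''. The $\varphi''$ term is where $1/\sqrt{\nu^*}$ enters. There $\sqrt{v(s)w(t)}/u\le 1/2$ absorbs one power of $u$, and the factor $|a+b|$ is absorbed via $\sqrt u$. What remains is a leftover $u^{-1/2}\le (\nu^*)^{-1/2}$, with $u=v(s)+w(t)\ge v_0+w_0\ge\nu^*$; this gives the ``$3/\sqrt{\nu^*}$''.

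\textbf{Main obstacle.} The hard step is this $\varphi''$ term. The difficulty is to get exactly one factor of $u^{-1/2}$ in total while keeping each of the remaining factors tied to a single coordinate weight. In particular, the $(1-\beta)^{-1/2}$ from Cauchy--Schwarz on $a$ must be recombined with the $\sqrt{1-\beta}$ from $v'$ and $w'$, so that only the prefactor $(1-\alpha^2/\beta)(1-\alpha)^{-2}$ survives.
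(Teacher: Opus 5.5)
\textbf{The lower bound on $V$ fails as written.} You claim that every intermediate sequence of your coordinate-by-coordinate telescoping satisfies $V\ge v^*(\by)\wedge v^*(\by')$. This is false. The intermediates are hybrids: they carry entries of $\by$ on the coordinates already switched and entries of $\by'$ on the others, so a hybrid can inherit the small entries of both. For any enumeration, let $\by'$ be nonzero exactly at the first two enumerated coordinates and $\by$ nonzero exactly at the next two. Then $v^*(\by),v^*(\by')>0$. But while the second coordinate is being switched, every other entry of the hybrid vanishes, so its $V$ runs down to $0$.

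The paper's proof is organised around exactly this obstruction. It introduces $\bar\by,\bar\bx$, taking in each coordinate whichever of $y_k,y'_k$ (resp.\ $x_k,x'_k$) has the larger modulus. It then splits the mixed difference into four pieces along $\by\to\bar\by\to\by'$ and $\bx\to\bar\bx\to\bx'$. Along each path the frozen coordinates dominate in modulus those of one endpoint, and this is what gives $v\ge\nu^*$. In your language: first switch the coordinates with $|y_k|\ge|y'_k|$, then the rest, and likewise on the $\bx$-side.

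\textbf{Your estimates do not actually need that bound, but they give a different constant.} In the $\varphi''$-term the powers of $u$ are $u^{1/2}$ from $|a+b|$, $u^{-3/2}$ from $\varphi''$, and $u$ from $|v'w'|\le 4(1-\beta)\beta^{-(r+j)/2}\sqrt{v(s)w(t)}\le 2(1-\beta)\beta^{-(r+j)/2}u$. These multiply to $u^0$, so your ``leftover $u^{-1/2}$'' is a miscount.

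The paper's displayed formula for $\partial_{z_{-\ell}}\partial_{z_{-j}}g$ omits the factor $(1-\alpha)\sum_k\alpha^{-k}z_k$ in its second term. That omission is where its $v^{-1/2}$, and hence $\nu^*$, comes from. You keep this factor as $a+b$, which is correct.

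Recounted, your argument yields, for any telescoping order and with no lower bound on $V$, the estimate $\frac{3\sqrt{(1-\beta)(1-\alpha^2/\beta)}}{2(1-\alpha)}\|\by-\by'\|_{\ell_\varrho}\|\bx-\bx'\|_{\ell_\varrho}$. That is all \cref{le:3451} needs. It does not yield the stated prefactor. Cauchy--Schwarz on $a$ costs $(1-\alpha^2/\beta)^{-1/2}$ in addition to $(1-\beta)^{-1/2}$, and the former cannot be absorbed. So your constant-collecting paragraph is unfounded.

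No other route repairs this. Take $\bx$ supported on $\{0\}$ and $\by$ on $\{-1,-2,\dots\}$, with $v(\bx+\by)=\epsilon^2$ split equally among $x_0$, $y_{-1}$ and a tail proportional to $((\alpha/\beta)^{-k})_{k\le-2}$. Let $\bx',\by'$ be small perturbations of $x_0$ and $y_{-1}$. Then $\nu^*\approx\epsilon^2/3$. The $a$-term of $\partial_{z_0}\partial_{z_{-1}}g$ is of order $\epsilon^{-2}(1-\alpha^2/\beta)^{-1/2}$ as $\alpha\uparrow\sqrt\beta$, and the other terms are $O(\epsilon^{-2})$. Meanwhile $\frac{1-\alpha^2/\beta}{(1-\alpha)^2}\bigl(2+3/\sqrt{\nu^*}\bigr)\varrho_0\varrho_{-1}$ stays bounded. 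So the inequality with its literal constant fails for $\alpha$ close to $\sqrt\beta$, and the $\nu^*$-free bound above is the right target.
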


\begin{proof}
In the proof, we let $(\varrho_{k}')_{k\in-\N_{0}}$ be given by
$$
\varrho'_{k}= \beta^{-k/2}
$$
and we consider the respective $\ell_{\varrho'}$-norm that is defined in complete analogy to the $\ell_\varrho$-norm. Since $\alpha<\sqrt\beta$ both these norms are  equivalent. 
Choose $\bar \bx$ and $\bar {\mathbf y}$ in $\ell_{\varrho}$ such  that for every $\ell\in-\N_{0}$
\bas{
\bar x_{\ell}\in\{x_{\ell},x'_{\ell}\}\text{ \ and \ }\bar y_{\ell}\in\{y_{\ell},y'_{\ell}\}
}
and $|\bar x_{\ell}|=|x_{\ell}|\vee |x'_{\ell}|$ and $|\bar y_{\ell}|=|y_{\ell}|\vee |y'_{\ell}|$.
Then
\begin{align}\begin{split}\label{eq987456}
g(\mathbf y&+\bx)-g(\mathbf y+\bx')-(g(\mathbf y'+\bx)-g(\mathbf y'+\bx'))\\
&=g(\mathbf y+\bx)-g(\mathbf y+\bar {\bx})-(g(\bar {\mathbf y}+\bx)-g(\bar{\mathbf y}+\bar\bx))\\
&\quad +g(\bar {\mathbf y}+\bx)-g(\bar {\mathbf y}+\bar \bx)-(g(\mathbf y'+\bx)-g(\mathbf y'+\bar\bx))\\
&\quad +g( {\mathbf y}+\bar\bx)-g({\mathbf y}+\bx')-(g(\bar{\mathbf y}+\bar\bx)-g(\bar{\mathbf y}+\bx'))\\
&\quad +g(\bar {\mathbf y}+\bar\bx)-g(\bar {\mathbf y}+\bx')-(g(\mathbf y'+\bar\bx)-g(\mathbf y'+\bx')).
\end{split}\end{align}
First we will derive an estimate for $|g(\mathbf y+\bx)-g(\mathbf y+\bar {\bx})-(g(\bar {\mathbf y}+\bx)-g(\bar{\mathbf y}+\bar\bx))|$.
We let for every $j\in\N_{0}$, $\bx(j)\in\ell_{\varrho}$ be given by
\bas{
x_{k}(j)=\begin{cases} x_{k}, &\text{ if } k\le -j,\\
\bar x_{k}, &\text{ else.}
\end{cases}
}
Note that by continuity of $g$ in $\ell_{\varrho}$ we have
\bas{\label{eq3567}
g(\mathbf y&+\bx)-g(\mathbf y+\bar {\bx})-(g(\bar {\mathbf y}+\bx)-g(\bar{\mathbf y}+\bar\bx))\\
&=\sum_{j=0}^{\infty}\bigl(g(\mathbf y+\bx(j))-g(\mathbf y+ {\bx(j+1)})-(g(\bar {\mathbf y}+\bx(j))-g(\bar{\mathbf y}+\bx(j+1)))\bigr).
}
We write the summands as integrals: for $j\in\N_0$, we denote by $\mathbf e_{-j}=(\1_{\{k=-j\}})_{k\in-\N_0}\in\ell_\varrho$
and write
\begin{align}\begin{split}\label{eq3567-2}
g(&\mathbf y+\bx(j))-g(\mathbf y+ {\bx(j+1)})-(g(\bar {\mathbf y}+\bx(j))-g(\bar{\mathbf y}+\bx(j+1)))\\
&=\int_{0}^{1} \bigl(\partial_{z_{-j}}g( \mathbf y+ \underbrace{{\bx(j+1)}+t (x_{-j}-\bar x_{-j})\mathbf e_{-j}}_{=:\bx(j,t)})-\partial_{z_{-j}}g( \bar{\mathbf y}+ {\bx(j+1)}+t (x_{-j}-\bar x_{-j})\mathbf e_{-j})\bigr)
  (x_{-j}-\bar x_{-j})\,\dd t.
\end{split}\end{align}
Next, we derive an estimate for $\partial_{z_{-j}}g( \mathbf y+ {\bx(j,t)})-\partial_{z_{-j}}g( \bar{\mathbf y}+ {\bx(j,t)})$ for $-j\in \II:=\{k\in-\N_{0}: x_{k}\not=0\text{ or } x'_{k}\not=0\}$. We consider in complete analogy to before $\by(\ell)\in\ell_{\varrho}$ ($\ell\in\N_{0}$) given by
\bas{
y_{k}(\ell)=\begin{cases} y_{k}, &\text{ if } k\le -\ell,\\
\bar y_{k}, &\text{ else,}
\end{cases}
}
and write
\begin{align}\begin{split}\label{eq872356}
|\partial_{z_{-j}}&g( \mathbf y+ {\bx(j,t)})-\partial_{z_{-j}}g( \bar{\mathbf y}+ {\bx(j,t)})|\\
&\le \sum_{\ell=0}^{\infty}|\partial_{z_{-j}}g( \mathbf y(\ell)+ {\bx(j,t)})-\partial_{z_{-j}}g( {\mathbf y}(\ell+1)+ {\bx(j,t)})|.
\end{split}\end{align}
Similarly as before, we have for every $\ell\in\N_{0}$
\begin{align}\begin{split}\label{eq:2462}
\partial_{z_{-j}}&g( \mathbf y(\ell)+ {\bx(j,t)})-\partial_{z_{-j}}g( {\mathbf y}(\ell+1)+ {\bx(j,t)})\\
&=\int_0^1\partial_{z_{-\ell}}\partial_{z_{-j}}g( \underbrace{\mathbf y(\ell+1)+s (y_{-\ell}-\bar y_{-\ell})\mathbf e_{-\ell} }_{=:\by(\ell,s)}+\bx(j,t))(y_{-\ell}-\bar y_{-\ell})\, \dd s.
\end{split}\end{align}
Note that by assumption and choice  of $j$, one has $y_{-\ell}=\bar y_{-\ell}$ in the case where $\ell=j$ and we restrict in the following the focus on the case where $\ell\not=j$.
In that case, we have for $\bz\in\ell_{\varrho}$
\bas{
\partial_{z_{-\ell}}\partial _{z_{-j}} g(\bz)&= -(1-\alpha)(1-\beta) \frac1{(\sqrt{v(\bz)}+\epsilon)^{2}}\frac1{\sqrt{v(\bz)}}(\alpha^{j}\beta^{\ell}z_{-\ell}+\alpha^{\ell}\beta^{j}z_{-j} )\\
&\qquad +(1-\beta)^{2} \beta^{j+\ell}\frac{3\sqrt{v(\bz)}+\epsilon}{(\sqrt{v(\bz)}+\epsilon)^{3} v(\bz)^{3/2}} z_{-j} z_{-\ell}.
}
Using that $ (1-\beta) \beta^{k}|z_{-k}|^{2}\le v(\bz)$ for $k\in\N_{0}$ we get that
\begin{align}\begin{split}\label{eq:23456}
|\partial_{z_{-\ell}}\partial _{z_{-j}} g(\bz)|&\le (1-\alpha)\sqrt{1-\beta} \epsilon^{-2}(\alpha^{j}\beta^{\ell/2}+\alpha^{\ell}\beta^{j/2} )+ (1-\beta)3\epsilon^{-2} \beta^{(j+\ell)/2}\frac{1}{ \sqrt{v(\bz)}}\\
&\le  \epsilon^{-2} \beta^{(j+\ell)/2}\Bigl(2+\frac{3}{ \sqrt{v(\bz)}}\Bigr).
\end{split}\end{align}
By definition of $\nu^{*}$, we have for every $s,t\in[0,1]$ that
\bas{
v(\by(\ell,s)+\bx(j,t))\ge \nu^{*}
}
which implies  with~\cref{eq:2462} and \cref{eq:23456} that one has
\bas{|\partial_{z_{-j}}g( \mathbf y(\ell)+ {\bx(j,t)})-\partial_{z_{-j}}g( {\mathbf y}(\ell+1)+ {\bx(j,t)})|\le \epsilon^{-2} \beta^{(j+\ell)/2}\Bigl(2+\frac{3}{ \sqrt{\nu^{*}}}\Bigr)|y_{-\ell}-\bar y_{-\ell}|
}
so that by~\cref{eq872356}
\bas{
|\partial_{z_{-j}}g( \mathbf y+ {\bx(j,t)})-\partial_{z_{-j}}g( \bar{\mathbf y}+ {\bx(j,t)})|\le \epsilon^{-2} \Bigl(2+\frac{3}{ \sqrt{\nu^{*}
}}\Bigr) \beta^{j/2}
\|\by-\bar\by\|_{\ell_{\varrho'}}.
}
Together with \cref{eq3567} and~\cref{eq3567-2} we arrive at
\bas{|g(\mathbf y+\bx)&-g(\mathbf y+\bar {\bx})-(g(\bar {\mathbf y}+\bx)-g(\bar{\mathbf y}+\bar\bx))|\\
&\le \epsilon^{-2} \Bigl(2+\frac{3}{ \sqrt{\nu^{*}
}}\Bigr) 
\|\by-\bar\by\|_{\ell_{\varrho'}}
  \|\bx-\bar \bx\|_{\ell_{\varrho'}}.
}
In complete analogy, one deduces that
\bas{|g(\bar {\mathbf y}+\bx)&-g(\bar {\mathbf y}+\bar \bx)-(g(\mathbf y'+\bx)-g(\mathbf y'+\bar\bx))|\\
&\le \epsilon^{-2} \Bigl(2+\frac{3}{ \sqrt{\nu^{*}
}}\Bigr) 
\|\bar\by-\by'\|_{\ell_{\varrho'}}
  \|\bx-\bar \bx\|_{\ell_{\varrho'}} ,
}
\bas{|g( {\mathbf y}+\bar\bx)&-g({\mathbf y}+\bx')-(g(\bar{\mathbf y}+\bar\bx)-g(\bar{\mathbf y}+\bx'))|\\
&\le \epsilon^{-2} \Bigl(2+\frac{3}{ \sqrt{\nu^{*}
}}\Bigr) 
\|\by-\bar\by\|_{\ell_{\varrho'}}
  \|\bar \bx-\bx'\|_{\ell_{\varrho'}}
}
and
\bas{
|g(\bar {\mathbf y}+\bar\bx)&-g(\bar {\mathbf y}+\bx')-(g(\mathbf y'+\bar\bx)-g(\mathbf y'+\bx'))|\\
&\le \epsilon^{-2} \Bigl(2+\frac{3}{ \sqrt{\nu^{*}
}}\Bigr) 
\|\bar\by-\by'\|_{\ell_{\varrho'}}
  \|\bar \bx-\bx'\|_{\ell_{\varrho'}}.
}
Noting that by choice of $\bar\bx$ and $\bar\by$ one has
\bas{
\|\by-\by'\|_{\ell_{\varrho'}}=\|\by-\bar \by\|_{\ell_{\varrho'}}+\|\bar \by-\by'\|_{\ell_{\varrho'}}
}
and
\bas{
\|\bx-\bx'\|_{\ell_{\varrho'}}=\|\bx-\bar \bx\|_{\ell_{\varrho'}}+\|\bar \bx-\bx'\|_{\ell_{\varrho'}}
}
we conclude with~\cref{eq987456} that
\bas{
|g(\mathbf y&+\bx)-g(\mathbf y+\bx')-(g(\mathbf y'+\bx)-g(\mathbf y'+\bx'))|\\
&\le \epsilon^{-2} \Bigl(2+\frac{3}{ \sqrt{\nu^{*}}}\Bigr) 
\|\by-\by'\|_{\ell_{\varrho'}}
  \| \bx-\bx'\|_{\ell_{\varrho'}}.
}
The statement follows by observing that generally
\bas{\epsilon^{-1} \|\cdot\|_{\ell_{\varrho'}} \le \frac{\sqrt{1-\alpha^2/\beta}}{1-\alpha}\|\cdot\|_{\ell_\varrho}.} 
\end{proof}

\begin{lemma}\label{le:345143} Let  $\beta\in(0,1)$,  $q\in(1-\beta,1]$, $\delta\in(0,\infty)$, $N\in\N\cup\{\infty\}$
and let $v^*$ be as in \cref{def:v_star}.
Moreover, let $(Z_n)_{n\in-\N_0}$ be a real-valued process that is adapted with respect to a filtration $(\cG_n)_{n\in-\N_0}$. We let
\bas{
\cR_{N}=\bigl\{\forall k\in\{-N+1,\dots,0\}: \P( Z_k^2 \ge \delta | \cG_{k-1}) \ge q \bigr\}
}
Then one has that 
\bas{
\E\Bigl[\1_{\cR_{N}} \1_{\bigl\{\sum_{k=-N+1}^0 \1_{\{Z_k^2\ge \delta\}}\ge 2\bigr\}} v^*((\1_{\{k> -N\}} Z_k)_{k\in-\N_0})^{-1}\Bigr]\le \frac\beta{(1-\beta)} \Bigl(\frac { q}{\beta-(1-q)}\Bigr)^2\, \delta^{-1}
}
and
\bas{\label{eq:783563}
\P\Bigl(\cR_{N} \cap\Bigl\{\sum_{k=-N+1}^0 \1_{\{Z_k^2\ge \delta\}}\le 1\Bigr\}\Bigr)\le  (1-q)^{N-1} (1+(N-1)q).
}
\end{lemma}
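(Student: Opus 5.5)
The plan is to treat the indicators $B_k:=\1_{\{Z_k^2\ge\delta\}}$, $k\in\{-N+1,\dots,0\}$, as Bernoulli trials run \emph{backwards in time}, each with conditional success probability at least $q$ on $\cR_N$. Both estimates will then follow by comparing with i.i.d.\ Bernoulli$(q)$ trials. The comparison uses monotone test functions and a supermartingale argument along the filtration $(\cG_k)$.

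To handle the fact that $\cR_N$ is not adapted, I first define the predictable events $R_k:=\bigcap_{j=-N+1}^{k}\{\P(Z_j^2\ge\delta\mid\cG_{j-1})\ge q\}$, each of which is $\cG_{k-1}$-measurable. Then $\cR_N=R_0\subset R_k$ for all $k$. Every inequality below is proved for the process stopped at the first index where the conditional bound fails, so on $\cR_N$ it coincides with the original one. The case $N=\infty$ follows by monotone convergence from finite $N$; for the second estimate the right-hand side is then $0$.

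\emph{Second estimate.} Let $\phi(m,c):=\P(c+\mathrm{Bin}(m,q)\le1)$, which is nonincreasing in $c$. Consider
\[
S_k:=\1_{R_k}\,\phi\Bigl(-k,\ \sum_{j=-N+1}^{k}B_j\Bigr),\qquad S_{-N}:=\phi(N,0).
\]
Given $\cG_{k-1}$ on $R_k$, we have $\phi(m,c)=q\,\phi(m-1,c+1)+(1-q)\,\phi(m-1,c)$, and $\phi(m-1,c+1)\le\phi(m-1,c)$. Hence a success probability $p\ge q$ gives $\E[S_k\mid\cG_{k-1}]\le S_{k-1}$, and on $R_k^c$ the term simply drops out. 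So $(S_k)$ is a supermartingale, and
\[
\P\Bigl(\cR_N\cap\Bigl\{\sum B_j\le1\Bigr\}\Bigr)\le\E[S_0]\le\phi(N,0)=(1-q)^N+Nq(1-q)^{N-1}=(1-q)^{N-1}\bigl(1+(N-1)q\bigr).
\]

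\emph{First estimate.} On $\{\sum B_j\ge2\}$, let $-\tau_1>-\tau_2$ be the two successes closest to $0$, so $0\le\tau_1<\tau_2\le N-1$. Removing a single index $r$ in the definition of $v^*$ leaves at least one of these two, so
\[
v^*\ge(1-\beta)\,\beta^{\tau_2}\,\delta ,\qquad\text{hence}\qquad v^{*}{}^{-1}\le\frac{\beta^{-\tau_2}}{(1-\beta)\delta}.
\]
Write $\tau_2+1=G_1+G_2$, where $G_1$ and $G_2$ are the successive waiting times (counted in trials) of the backward trial sequence. For a Geometric$(q)$ variable $G$,
\[
\E[\beta^{-G}]=\sum_{g\ge1}q(1-q)^{g-1}\beta^{-g}=\frac{q}{\beta-(1-q)}<\infty,
\]
and this is finite because $q>1-\beta$. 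So in the i.i.d.\ case $\E[\beta^{-\tau_2}]=\beta\,(q/(\beta-(1-q)))^2$, which gives exactly the claimed constant.

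The main obstacle is justifying this domination under only a conditional lower bound $p\ge q$ on $\cR_N$, with the reward $\beta^{-\tau_2}$ being increasing in the waiting time. I would again use a value function. Let $h(c,t)$ be the expected final reward from state ($c$ successes so far, $t$ steps elapsed) under i.i.d.\ Bernoulli$(q)$ trials, with reward $0$ if the horizon $N$ is reached before the second success. Since $\beta^{-t}$ is increasing in $t$, we get $h(c+1,t+1)\le h(c,t+1)$, i.e., a success never increases the expected remaining reward. This monotonicity gives the same one-step supermartingale inequality as before. Checking this monotonicity, together with the stopping at $R_k^c$, is the only delicate point. With it, $\E[\1_{\cR_N}\1_{\{\sum B_j\ge2\}}\beta^{-\tau_2}]\le h(0,0)\le\beta\,(q/(\beta-(1-q)))^2$, and dividing by $(1-\beta)\delta$ yields the first claim.
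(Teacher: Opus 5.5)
\textbf{Gap in the first estimate.} Your second estimate is correct, and it takes a different route from the paper: a one-step supermartingale with $\phi$, which works because the terminal reward $\1_{\{c\le 1\}}$ is nonincreasing in $c$. The paper instead uses a coupling. The first estimate, however, fails at exactly the step you flagged as delicate. You set the reward to $0$ when the horizon is reached before the second success. With that choice the reward is no longer increasing in the waiting time, because it drops to $0$ after step $N-1$. So the claimed monotonicity $h(c+1,t+1)\le h(c,t+1)$ is false. For $N=2$, in the state with one success and one trial left, a success pays $\beta^{-1}$ while a failure pays $0$. Consequently the intermediate inequality $\E[\1_{\cR_N}\1_{\{\sum_j B_j\ge 2\}}\beta^{-\tau_2}]\le h(0,0)$ is wrong in general. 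Take $N=2$ and deterministic $Z_0^2=Z_{-1}^2=\delta$. Then $\cR_2=\Omega$ and $\tau_2=1$, so the left side equals $\beta^{-1}$, while $h(0,0)=q^2\beta^{-1}<\beta^{-1}$ whenever $q<1$. The target bound still holds, but only because it is the \emph{untruncated} i.i.d.\ expectation, which is strictly larger than your $h(0,0)$.

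\textbf{How to repair it.} Drop the truncation and let the comparison trials continue i.i.d.\ Bernoulli$(q)$ past the horizon. Then $h(1,t)=\beta^{-t}q\beta/(\beta-(1-q))$, and the one-step comparisons go through:
\begin{itemize}
\item A success in state $(1,t)$ pays $\beta^{-t}\le h(1,t+1)=\beta^{-t}q/(\beta-(1-q))$, since $\beta\le 1$.
\item $h(1,\cdot)$ is increasing, so $h(1,t+1)\le h(1,t+2)\le h(0,t+1)$.
\item The terminal continuation values $h(c,N)\ge 0$ for $c\le 1$ can be discarded, which produces the indicator $\1_{\{\sum_j B_j\ge 2\}}$.
\item One gets $h(0,0)=\beta q^2/(\beta-(1-q))^2$ exactly.
\end{itemize}
This is what the paper's coupling delivers pathwise. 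It builds i.i.d.\ Bernoulli$(q)$ variables $I_k$ such that, on $\cR_N$, every $I$-success in the window is a $Z$-success. Hence $\tau_2\le S$, where $S$ is the untruncated second-success index, and $v^*\ge\delta(1-\beta)\beta^{S}$ regardless of whether $S$ falls inside the window. Both claims then follow at once from the negative binomial law of $S-1$. Your supermartingale route avoids the enlarged probability space but needs the untruncated value function. One minor point: for $N=\infty$ in the first estimate the integrand is not monotone in $N$, since $\1_{\cR_N}$ decreases while the other factors increase. Pass to the limit with Fatou's lemma, not monotone convergence.
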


\begin{proof} We apply a coupling argument.
On a possibly enlarged filtered probability space we can define adapted Bernoulli random variables $(I_k)_{k\in-\N_0}$ such that
\bas{
\P(I_k=1|\cF_{k-1})=q \qquad \text{and} \qquad \1_{\{\P(Z_k^2\ge \delta|\cF_{k-1})\ge q\}} \delta I_k \le Z_k^2
}
for all $k\in-\N_0$. 
We let
\bas{
S=\inf\{\ell\in\N: I_{-\ell}= 1\text{ and }\exists \, r\in\{0,\dots,\ell-1\}\text{ with } I_{-r}=1\}
}
The random variable  $S-1$ is negative binomially distributed with parameters $2$ and $q$ so that (using that $\beta>1-q$) we get that
\bas{
 \E\bigl[ \beta ^{-(S-1)}\bigr]  = \Bigl(\frac {\beta q}{\beta-(1-q)}\Bigr)^2.
}
Moreover,  we have that
\bas{
\P\bigl(S-1\ge N-1\bigr) =  (1-q)^{N}+N q(1-q)^{N-1}= (1-q)^{N-1} (1+(N-1)q).
}
On the event $\cR_{N}\cap \bigl\{\sum_{k=-N+1}^0 \1_{\{Z_k^2\ge \delta\}}\ge 2\bigr\}$, we have that
$v^*((\1_{\{k> -N\}} Z_k)_{k\in-\N_0})\ge \delta (1-\beta)\beta^S$.
Consequently,
\bas{
\E\bigl[\1_{\cR_{N}} &\1_{\{\sum_{k=-N+1}^0 \1_{\{Z_k^2\ge \delta\}}\ge 2\}} v^*((\1_{\{k>-N\}} Z_k)_{k\in-\N_0})^{-1}\bigr] \le \E\bigl[ \bigl(\delta (1-\beta) \beta^{S}\bigr)^{-1} \bigr] \\
&\le \frac1{(1-\beta)\beta\delta} \,\E[\beta^{-(S-1)}]= \frac\beta{(1-\beta)} \Bigl(\frac { q}{\beta-(1-q)}\Bigr)^2\, \delta^{-1}.
}
Moreover, on the event  $\cR_{N}$ we have that 
$\sum_{k=-N+1}^0 I_k\le \sum_{k=-N+1}^0 \1_{\{Z_k^2\ge \delta\}}$ so that
\bas{
\P\Bigl(\cR_{N} \cap\Bigl\{\sum_{k=-N+1}^0 \1_{\{Z_k^2\ge \delta\}}\le 1\Bigr\}\Bigr)\le \P(S\ge N)=(1-q)^{N-1} (1+(N-1)q).
}
\end{proof}

\begin{lemma}\label{le:345143-2} Let  $\beta\in(0,1)$,  $q\in(1-\beta,1]$, $\delta\in(0,\infty)$, $N\in\{2,3,\dots\}$
and let $v^*$ be as in \cref{def:v_star}.
Moreover, let $\bY=(Y_n)_{n\in-\N_0}$ be a vector of independent real-valued $L^2$-integrable  random variables for which $Y_{-N+1},\dots,Y_0$ are identically distributed  with
\bas{\label{eq63645}
 \P(Y_0^2\ge \delta)\ge q.
}
Let $\tilde\cE'=\{\sum_{k=-N+1}^0 \1_{\{Y_k^2\ge \delta\}}\ge 2\}$. Then one has that
\bas{
\E\bigl[&\1_{\tilde \cE' }(1+ v^*(\bY)^{-1}) (\|\bY\|_{\ell_\varrho}^2+1) \bigr]\\
&\le\Bigl(1+ \frac\beta{(1-\beta)} \Bigl(\frac { q}{\beta-(1-q)}\Bigr)^2\, \delta^{-1}\Bigr) 
\,\E\bigl[\|\bY\|_{\ell_\varrho}^2+1\big|\min(Y_0^2,Y_{-1}^2)\ge \delta\bigr].
}
Moreover, for a sequence $\bY=(Y_n)_{n\in-\N_0}$ of i.i.d.\ $ L^2 $-integrable random variables with~\cref{eq63645} one has that
\bas{
\E\bigl[&(1+ v^*(\bY)^{-1}) (\|\bY\|_{\ell_\varrho}^2+1) \bigr]\le\Bigl(1+ \frac\beta{(1-\beta)} \Bigl(\frac { q}{\beta-(1-q)}\Bigr)^2\, \delta^{-1}\Bigr) 
\,\E\bigl[\|\bY\|_{\ell_\varrho}^2+1\bigr].
}
\end{lemma}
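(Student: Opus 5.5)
The plan is to reuse the coupling idea from \cref{le:345143}, this time keeping track of the weight $\|\bY\|_{\ell_\varrho}^2+1$. Since the $Y_k$ are independent, we do not need an auxiliary Bernoulli coupling. We work directly with the indicators $J_k=\1_{\{Y_k^2\ge\delta\}}$ for $k\in\{-N+1,\dots,0\}$.

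Define the random index
\[
S=\inf\{\ell\in\N: J_{-\ell}=1\text{ and }J_{-r}=1\text{ for some }r\in\{0,\dots,\ell-1\}\},
\]
so that $\tilde\cE'=\{S\le N-1\}$. On $\tilde\cE'$, deleting one index in the definition of $v^*$ still leaves a term of size at least $(1-\beta)\beta^{S}\delta$, because two indices in $\{-S,\dots,0\}$ carry $Y_k^2\ge\delta$. Hence $v^*(\bY)\ge(1-\beta)\beta^S\delta$ on $\tilde\cE'$. We then decompose according to the value of $S$ and the position $-r$ of the first success:
\[
\E\bigl[\1_{\tilde\cE'}(1+v^*(\bY)^{-1})(\|\bY\|^2_{\ell_\varrho}+1)\bigr]
\le\sum_{s=1}^{N-1}\sum_{r=0}^{s-1}\bigl(1+((1-\beta)\beta^s\delta)^{-1}\bigr)\,
\E\bigl[\1_{\{S=s,\ \text{first success at }-r\}}(\|\bY\|^2_{\ell_\varrho}+1)\bigr].
\]

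The key step is to show that
\[
\E\bigl[\1_{\{S=s,\,\text{first at }-r\}}(\|\bY\|^2_{\ell_\varrho}+1)\bigr]\le \P(S=s,\text{first at }-r)\;\E\bigl[\|\bY\|^2_{\ell_\varrho}+1\,\big|\,\min(Y_0^2,Y_{-1}^2)\ge\delta\bigr].
\]
The event fixes $J_{-r}=J_{-s}=1$ and $J_k=0$ at the other indices in $\{-s,\dots,0\}$; all remaining coordinates are free. Conditioning on an event of the form $\{Y_k^2<\delta\}$ can only decrease $\E[Y_k^2]$ and $\E|Y_k|$ relative to conditioning on $\{Y_k^2\ge\delta\}$, since $y^2\mapsto y^2$ is monotone. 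So we compare with the configuration in which all coordinates in $\{-s,\dots,0\}$ that are constrained are conditioned on exceeding $\delta$.

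The real obstacle is that $\|\bY\|^2_{\ell_\varrho}$ is a square of a weighted sum, not a weighted sum of squares, and that the weights $\varrho_k$ differ from the weights at positions $0,-1$. To handle the cross terms, expand the square. Independence factorizes each cross term into products of conditional first moments $\E[|Y_k|\mid\cdot]$, and each of these is maximized under conditioning on $\{Y_k^2\ge\delta\}$. The identical distribution of $Y_{-N+1},\dots,Y_0$ lets us relocate the two conditioned indices $-r,-s$ to $0,-1$, because $\varrho_k$ is decreasing in $|k|$: moving a conditioned (larger) coordinate to a position of larger weight only increases the bound. This relocation needs an exchange argument, which I expect to be the fiddliest part. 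I would perform it through a coupling that swaps coordinates, or via the rearrangement inequality applied to the expanded square.

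Once the key step is in place, summing $\P(S=s,\text{first at }-r)\bigl(1+((1-\beta)\beta^s\delta)^{-1}\bigr)$ over $s$ and $r$ gives at most $1+\E[((1-\beta)\beta^S\delta)^{-1}]$. Here $S-1$ is dominated by a negative binomial variable with parameters $2$ and $q$. By the computation in \cref{le:345143}, this yields the constant $1+\frac{\beta}{1-\beta}\bigl(\frac{q}{\beta-(1-q)}\bigr)^2\delta^{-1}$. For the i.i.d.\ statement, take $N=\infty$, so that $\tilde\cE'$ has probability one because $q>0$. Instead of bounding through the conditional expectation, decompose the unconditional expectation along $\{S=s\}$. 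Equivalently, note that the upper bound for the weighted expectation is obtained with the unconditional $\E[\|\bY\|^2_{\ell_\varrho}+1]$ once the positive correlation between $v^*(\bY)^{-1}$ and the norm is shown to be nonpositive, which follows from the same monotonicity argument.
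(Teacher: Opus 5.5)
\textbf{Verdict.} Your argument for the first inequality takes a different and more economical route than the paper, but as written its key step compares with the wrong law, and your argument for the second (i.i.d.) inequality is missing a correlation inequality.

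\textbf{First inequality: the route.} Your atoms $\{S=s,\ \text{first success at }-r\}$ are exactly the paper's events $\{\II=\{-r,-s\}\}$. The paper applies the FKG inequality conditionally on each such event to factor the expectation. You instead observe that on the atom $1+v^*(\bY)^{-1}\le 1+((1-\beta)\beta^s\delta)^{-1}$ holds deterministically. \cref{le:345143} bounds $\E[\1_{\tilde\cE'}v^*(\bY)^{-1}]$ through precisely this estimate, so you get the same constant without any correlation inequality. Your domination of $S-1$ by a negative binomial$(2,q)$ variable is also fine.

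\textbf{First inequality: the key step.} Comparing the coordinates conditioned on $\{Y_k^2<\delta\}$ with coordinates conditioned on $\{Y_k^2\ge\delta\}$ bounds $\E[\|\bY\|^2_{\ell_\varrho}+1\mid\II=\{-r,-s\}]$ by the conditional expectation given $\{Y_k^2\ge\delta\ \forall k\in\{-s,\dots,0\}\}$. That configuration has $s+1$ coordinates pushed up. It exceeds $\E[\|\bY\|^2_{\ell_\varrho}+1\mid\min(Y_0^2,Y_{-1}^2)\ge\delta]$ and cannot be relocated back to it; it also contradicts your next sentence, which relocates only two conditioned indices.

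You must compare with the \emph{unconditioned} law: $\E[|Y_k|\mid Y_k^2<\delta]\le\E|Y_k|\le\E[|Y_k|\mid Y_k^2\ge\delta]$, and likewise for second moments. The conditioning event is a product set, so after expanding the square the conditional expectation is nondecreasing in each coordinate's first and second moments. Hence you may replace the down-conditioned coordinates by unconditioned ones.

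Then exchange an up-conditioned coordinate at $a$ with an unconditioned one at $b$, where $\varrho_b\ge\varrho_a$. Let $\mu_1^\uparrow,\mu_2^\uparrow$ and $\mu_1,\mu_2$ be the first and second moments of $|Y_0|$ given $\{Y_0^2\ge\delta\}$ and unconditionally, and let $T$ be the weighted sum over the remaining coordinates. The exchange changes the expectation by $(\varrho_b^2-\varrho_a^2)(\mu_2^\uparrow-\mu_2)+2(\varrho_b-\varrho_a)(\mu_1^\uparrow-\mu_1)\E[T]\ge0$. Two such exchanges move the up-conditioned coordinates to $0,-1$. This is the step the paper leaves implicit when it asserts that the supremum over $A$ is attained at $\{0,-1\}$.

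\textbf{Second (i.i.d.) inequality: the gap.} Its right-hand side is the \emph{unconditional} $\E[\|\bY\|^2_{\ell_\varrho}+1]$. Bounding atom by atom cannot produce this, because on $\{\II=\{0,-1\}\}$ the conditional expectation of $\|\bY\|^2_{\ell_\varrho}$ is in general larger than the unconditional one. You must use that the weight is small exactly where the norm is large, i.e.
\[
\E\bigl[(1+v^*(\bY)^{-1})(\|\bY\|^2_{\ell_\varrho}+1)\bigr]\le\E\bigl[1+v^*(\bY)^{-1}\bigr]\,\E\bigl[\|\bY\|^2_{\ell_\varrho}+1\bigr].
\]
You correctly identify that a nonpositive correlation is needed. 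It does not, however, follow from the marginal stochastic ordering in your key step. It is Harris' (FKG) inequality for a coordinatewise decreasing and a coordinatewise increasing function of the independent variables $Y_k^2$, which is exactly what the paper invokes.

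Working with $\beta^{-S}$ in place of $v^*(\bY)^{-1}$ does not avoid this. You would still need Harris for the Bernoulli indicators, or a proof that $s\mapsto\E[\|\bY\|^2_{\ell_\varrho}\mid S=s]$ is nonincreasing combined with Chebyshev's sum inequality. Once the correlation bound is in place, $\E[v^*(\bY)^{-1}]\le\frac{\beta}{1-\beta}\bigl(\frac{q}{\beta-(1-q)}\bigr)^2\delta^{-1}$ from \cref{le:345143} with $N=\infty$ finishes the proof.
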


\begin{proof} 
We consider the random set $\II$ which contains the two largest indices from $\{-N+1,\dots,0\}$ for which $Z_n^2\ge \delta$ provided those indices exist (otherwise the set contains a single or none entry). Note that $\tilde \cE'=\{\#\II=2\}$ and we have that
\bas{\label{eq876238546}
\E\bigl[\1_{\tilde \cE' }(1+ v^*(\bY)^{-1}) (\|\bY\|_{\ell_\varrho}^2+1) \bigr]
=\sum_{\substack{A\subset \{-N+1,\dots,0\}:\\ \#A=2}}
\P(\II=A)  \,\E\bigl[(1+ v^*(\bY)^{-1}) (\|\bY\|_{\ell_\varrho}^2+1)\big|\II=A\bigr]
}
Now note that 
\bas{\label{eq8728345}
1+ v^*(\bY)^{-1} \text{ \ \ and \ \ } \|\bY\|_{\ell_\varrho}^2+1
}
can be viewed as componentwise decaying, resp. increasing functions on the random vector $\bY^2:=(Y_n^2)_{n\in-\N_0}$ and that the conditional distribution of $\bY^2$ given $\{\II=A\}$ is a product distribution. Hence, we can apply the FKG inequality and get that under the conditional distribution the two random variables in \cref{eq8728345} are negatively correlated meaning that
\bas{
\E\bigl[(1+ v^*(\bY)^{-1}) (\|\bY\|_{\ell_\varrho}^2+1)\big|\II=A\bigr]\le \E[1+ v^*(\bY)^{-1}|\II=A]\, \E[\|\bY\|_{\ell_\varrho}^2+1|\II=A].
}
Thus we get with \cref{eq876238546} that
\bas{
\E\bigl[\1_{\tilde \cE' }(1+ v^*(\bY)^{-1}) (\|\bY\|_{\ell_\varrho}^2+1) \bigr] \le \E\bigl[\1_{\tilde \cE' }(1+ v^*(\bY)^{-1}) \bigr] \sup_{\substack{A\subset \{-N+1,\dots,0\}:\\ \#A=2}} \E\bigl[\|\bY\|_{\ell_\varrho}^2+1\big|\II=A\bigr].
}
Since we assumed that the random variables $Y_{-N+1},\dots,Y_0$ are independent identically distributed  the latter supremum is attained for the set $A=\{0,-1\}$ so that we get that 
\bas{
\E\bigl[\1_{\tilde \cE' }(1+ v^*(\bY)^{-1}) (\|\bY\|_{\ell_\varrho}^2+1) \bigr] \le \E\bigl[\1_{\tilde \cE' }(1+ v^*(\bY)^{-1}) \bigr] \, \E\bigl[\|\bY\|_{\ell_\varrho}^2+1\big| \min(Y_0^2,Y_1^2)\ge \delta \bigr].
}
The statement follows bounding the first term on the right-hand side with \cref{le:345143}.
In the second statement one can immediately apply the FKG inequality to obtain the result.
\end{proof}

\begin{lemma}
\label{le:3451} Let    $q\in(1-\beta,1]$ and let, for $\bz\in\ell_{\varrho}$,
\bas{
v^{*}(\bz)=\inf\Bigl\{(1-\beta)\sum_{k\in-\N_{0}\backslash\{r\}} \beta^{-k} z_{k}^{2}:r\in-\N_{0}\Bigr\}.
}
There exists a constant $C\in(0,\infty)$ only depending on the damping factors and $q$  such that for every innovation $(X,U)$, $\delta\in(0,\infty)$, $\theta,\theta'\in\R^{d}$ with
\bas{
\P(X^{(i)}(\bar\theta,U)^{2}\ge \delta)\ge q\text{, \ for all \ $\bar\theta\in\{\theta,\theta'\}$ and $i\in\{1,\dots,d\}$},
}
one has for every $\bx\in \ell_{\varrho}^d$ that
\bas{
|\Psi_\theta(\bx)-\Psi_{\theta'}(\bx)|&\le C \sum_{i=1}^d  \bigl(1+(\delta^{-1/2}+ v^*(\bx^{(i)})^{-1/2}) (\|\bx^{(i)} \|_{\ell_\varrho}+ \E[X^{(i)}(\theta,U)^2]^{1/2} )\bigr)  \\
&\hspace{6cm} \E[(X^{(i)}(\theta,U)-X^{(i)}(\theta',U))^2]^{1/2} 
}
\end{lemma}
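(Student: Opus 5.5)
The plan is to reduce the statement to one-dimensional second-order difference estimates, coordinate by coordinate, by coupling the two driving sequences. Since $g$ acts componentwise (the $i$-th coordinate of $g$ depends only on the $i$-th coordinate sequence), it suffices to treat each $i$ separately. I will therefore fix $i$ and drop the superscript. Write $Y_k=X^{(i)}(\theta,U_k)$ and $Y'_k=X^{(i)}(\theta',U_k)$, with the same $U_k$ in both, so that $\E[(Y_k-Y'_k)^2]^{1/2}=:\Delta$ is the quantity on the right-hand side.

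By definition,
\[
\Psi_\theta(\bx)-\Psi_{\theta'}(\bx)=\sum_{m\ge1}\E\bigl[g(\bX^{\bx,\theta}(m))-g(\bX^{\bx,\theta'}(m))\bigr]-\bigl(f(\theta)-f(\theta')\bigr).
\]
For each $m$ I split the history into two disjoint parts.
\begin{itemize}
\item The deterministic part is $\bx$ shifted by $m$; call it $\bx_{(m)}$.
\item The fresh parts are $\by_m$, built from $(Y_1,\dots,Y_m)$, and $\by'_m$, built from $(Y'_1,\dots,Y'_m)$.
\end{itemize}
To express $f$ with the same structure, I take an independent i.i.d.\ copy of the past, giving sequences $\tilde\bx_{(m)}$ (under $\theta$) and $\tilde\bx'_{(m)}$ (under $\theta'$). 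Then $f(\theta)=\E[g(\by_m+\tilde\bx_{(m)})]$, and similarly for $\theta'$. The $m$-th summand then becomes
\[
\E\bigl[g(\by_m+\bx_{(m)})-g(\by'_m+\bx_{(m)})-g(\by_m+\tilde\bx_{(m)})+g(\by'_m+\tilde\bx'_{(m)})\bigr].
\]
I add and subtract $g(\by'_m+\tilde\bx_{(m)})$. This gives a genuine mixed second difference
\[
g(\by_m+\bx_{(m)})-g(\by_m+\tilde\bx_{(m)})-\bigl(g(\by'_m+\bx_{(m)})-g(\by'_m+\tilde\bx_{(m)})\bigr),
\]
plus a remainder $g(\by'_m+\tilde\bx'_{(m)})-g(\by'_m+\tilde\bx_{(m)})$.

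The mixed second difference has disjoint supports, so \cref{le:2564} applies. Its $\ell_\varrho$-norms are controlled as follows:
\begin{itemize}
\item $\|\by_m-\by'_m\|_{\ell_\varrho}\lesssim\sum_{k\le m}\varrho_{k-m}|Y_k-Y'_k|$, which has $L^2$-norm at most $\|\varrho\|_{\ell_1}\Delta$;
\item $\|\bx_{(m)}-\tilde\bx_{(m)}\|_{\ell_\varrho}\le\beta^{m/2}\bigl(\|\bx\|_{\ell_\varrho}+\|\tilde\bx\|_{\ell_\varrho}\bigr)$.
\end{itemize}
This produces the factor $\beta^{m/2}(\|\bx\|+\E[Y^2]^{1/2})\Delta$, which is summable in $m$. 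The price is the factor $1+\nu^{*-1/2}$. I bound it using $\nu^*\ge v^*(\bx_{(m)})\wedge v^*(\tilde\bx_{(m)})\ge\beta^m\,(v^*(\bx)\wedge v^*(\tilde\bx))$, or alternatively via the fresh-part terms $v^*(\by_m)\wedge v^*(\by'_m)$. The factor $\beta^{-m/2}$ would destroy summability, so I will choose the fresh-part bound whenever at least two large entries $Y_k^2\ge\delta$ (and likewise for $Y'$) lie among $1,\dots,m$. \Cref{le:345143} then gives $\E[v^*(\by_m)^{-1}]\lesssim\delta^{-1}$ uniformly in $m$, with failure probability $\lesssim(1-q)^{m-1}m$. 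On the good event, Cauchy--Schwarz with \cref{le:345143-2}, which handles the correlation between $v^{*-1}$ and $\|\cdot\|^2$ through FKG, yields
\[
C\beta^{m/2}\,\delta^{-1/2}\bigl(\|\bx\|_{\ell_\varrho}+\E[Y^2]^{1/2}\bigr)\Delta.
\]
On the bad event I use $v^*(\bx)^{-1/2}$ instead. The resulting loss $\beta^{-m/2}$ against $\beta^{m/2}$ is compensated by the probability $(1-q)^{m-1}m$: since $q>1-\beta$, the series $\sum_m m(1-q)^{m/2}\cdots$ converges after a Hölder split. I expect this step to need care, and it is likely why $q>1-\beta$ is assumed.

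The remainder term $g(\by'_m+\tilde\bx'_{(m)})-g(\by'_m+\tilde\bx_{(m)})$ has zero mean after summing the telescoping structure. More simply, I would treat it by the same second-difference argument, taking as reference the fully stationary stream; this is how the $\E[Y^2]^{1/2}$ factor enters. If summed naively it is only $O(\beta^{m/2}\Delta)$, which is already summable. Together with the Lipschitz bound from \cref{le:523}, this contributes the leading "$1\cdot\Delta$" term.

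Summing over $m$ and over the coordinates $i$ gives the claimed estimate. The main obstacle is the bad event where fewer than two large innovations occur in the fresh window. There the integrability of $\nu^{*-1/2}$, weighted against the geometric factors, must be balanced exactly using $q>1-\beta$.
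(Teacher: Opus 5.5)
Your proposal is correct and is essentially the paper's proof. It uses the same added-and-subtracted term $g(\by'_m+\tilde\bx_{(m)})$, \cref{le:2564} for the mixed second difference, Lipschitz continuity of $g$ for the $O(\beta^{m/2}\Delta)$ remainder, and \cref{le:345143} for the factor $\nu^{*-1/2}$. The paper organises this last step differently: it bounds $\E[\nu^{*-1}]$ uniformly in $m$ via $\sup_m(1-q)^{m-1}(1+(m-1)q)\beta^{-m}<\infty$ and then applies Cauchy--Schwarz once, instead of your good/bad event split.

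Two side remarks in your write-up are wrong but harmless. The remainder does not have zero mean; only your Lipschitz bound is valid there. And neither FKG nor \cref{le:345143-2} is needed, because plain Cauchy--Schwarz plus independence of the fresh window $(U_1,\dots,U_m)$ from the past already separates $\nu^{*-1}$ from $\|\by_m-\by'_m\|_{\ell_\varrho}\|\bx-\tilde\bx\|_{\ell_\varrho}$.
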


\begin{proof}
For ease of notation we first restrict attention to the case $d=1$. One has
\bas{\label{eq:32761}
&
|\Psi_\theta(\bx)-\Psi_{\theta'}(\bx)|
\\ & =\Bigl|\E\Bigl[\sum_{k=1}^{\infty} \bigl(g(X(\theta,U_{k}), \dots, X(\theta,U_{1}),\bx)-g(X(\theta,U_{k}),\dots)
\\
&\quad -\bigl(g(X(\theta',U_{k}),\dots,X(\theta',U_{1}),\bx)-g(X(\theta',U_{k}),\dots)\bigr)\bigr)\Bigr]\Bigr|\\
&\le \sum_{k=1}^{\infty} \E\bigl[\bigl| g(X(\theta,U_{k}), \dots, X(\theta,U_{1}),\bx)-g(X(\theta,U_{k}),\dots)\\
&\quad -\bigl(g(X(\theta',U_{k}),\dots,X(\theta',U_{1}),\bx)-g(X(\theta',U_{k}),\dots,X(\theta',U_{1}), X(\theta,U_{0}),\dots)\bigr)\bigr|\bigr]\\
&+\sum_{k=1}^{\infty}\E\bigl[\bigl| g(X(\theta',U_{k}),\dots,X(\theta',U_{1}), X(\theta,U_{0}),\dots) -g(X(\theta',U_{k}),\dots)\bigr|\bigr].
}
The latter sum satisfies
\bas{\label{eq:32761-3}
& \sum_{k=1}^{\infty} \E\bigl[\bigl| g(X(\theta',U_{k}),\dots,X(\theta',U_{1}), X(\theta,U_{0}),\dots\bigr) -g(X(\theta',U_{k}),\dots)\bigr|\bigr]\\
&\le \sum_{k=1}^{\infty}\E\bigl[\bigl\| (\underbrace{0,\dots,0}_{k-\text{times}}, X(\theta,U_{0})-X(\theta',U_{0}),\dots\bigr)\bigr\|_{\ell_{\varrho}^d} \bigr]\\
&\le \sum_{k=1}^{\infty} \beta^{k/2}\, \|\varrho\|_{\ell_{1}} \E[|X(\theta,U_{0})-X(\theta',U_{0})|]=\frac{\sqrt\beta}{1-\sqrt\beta} \|\varrho\|_{\ell_{1}} \E[|X(\theta,U_{0})-X(\theta',U_{0})|].
}
We consider the first sum appearing on the right hand side of~\cref{eq:32761}.
We apply \cref{le:2564}: one has for fixed $k\in\N$
\bas{\label{eq923457}
 |g(&X(\theta,U_{k}), \dots, X(\theta,U_{1}),\bx)-g(X(\theta,U_{k}),\dots)\\
&\qquad -\bigl(g(X(\theta', U_{k}),\dots,X(\theta',U_{1}),\bx)-g(X(\theta',U_{k}),\dots,X(\theta',U_{1}), X(\theta,U_{0}),\dots)\bigr)|\\
&\le \frac{1-\alpha^2/\beta}{(1-\alpha)^2}\Bigl(2+\frac3{\sqrt{V_k^{*}}}\Bigr) \beta^{k/2}\|(X(\theta, U_{k})-X(\theta',U_{k}),\dots,X(\theta,U_{1})-X(\theta',U_{1}))\|_{\ell_{\varrho}}\\
&\qquad\qquad\qquad\qquad\qquad\qquad\qquad\qquad\qquad\qquad\qquad\|\bx-(X(\theta,U_{0}),\dots)\|_{\ell_{\varrho}},
}
where 
\bas{
V_{k}^{*}&=(v^{*}(X(\theta,U_{k}), \dots, X(U_{1},\theta),0,\dots)\wedge v^{*}(X(\theta',U_{k}), \dots, X(\theta',U_{1}),0,\dots))\\
&\qquad+ \beta ^{k} (v^{*}(\bx)\wedge v^{*}(X(\theta,U_{0}),\dots)).}
Next, we will derive an estimate for the expectation of the right-hand side of~\cref{eq923457}.
Using that $(U_k)$ is an iid sequence we get that
\bas{\label{eq:24751}
\E[\|&(X(\theta,U_{k})-X(\theta',U_{k}),\dots,X(\theta,U_{1})-X(\theta',U_{1}))\|_{\ell_{\varrho}}^2\|\bx-(X(\theta,U_{0}),\dots)\|^2_{\ell_{\varrho}}]\\
&=\E\bigl[\|(X(\theta,U_{k})-X(\theta',U_{k}),\dots,X(\theta,U_{1})-X(\theta',U_{1}))\|_{\ell_{\varrho}}^2\bigr]\,\E\bigl[\|\bx-(X(\theta,U_{0}),\dots)\|^2_{\ell_{\varrho}}\bigr]\\
&\le 2\|\varrho\|_{\ell_{1}}^2\,\E\bigl[(X(\theta,U)-X(\theta',U))^2\bigr] (\|\bx\|_{\ell_{\varrho}}^2+\|\varrho\|_{\ell_{1}}^2 \E[X(\theta,U)^2])=:\Upsilon.
}
Together with Cauchy-Schwarz we thus get that  
\bas{\label{eq97356-2} \E\Bigl[
 \frac1{\sqrt{V_k^{*}}} &\|(X(\theta,U_{k})-X(\theta',U_{k}),\dots,X(\theta,U_{1})-X(\theta',U_{1}))\|_{\ell_{\varrho}}\|\bx-(X(\theta,U_{0}),\dots)\|_{\ell_{\varrho}}\Bigr]\\
 &\le \E\Bigl[\frac1{{V_k^{*}}} \Bigr]^{1/2}\E\Bigl[\|(X(\theta,U_{k})-X(\theta',U_{k}),\dots,X(\theta,U_{1})-X(\theta',U_{1}))\|_{\ell_{\varrho}}^{2}\|\bx-(X(\theta,U_{0}),\dots)\|^{2}_{\ell_{\varrho}}\Bigr]^{1/2}\\
 &\le \E\Bigl[\frac1{{V_k^{*}}} \Bigr]^{1/2} \,\sqrt{\Upsilon}.}
Next, we will apply \cref{le:345143} to provide an estimate for $ \E[\frac 1{V_{k}^{*}}] $.
For this note that
\bas{\label{eq:87e5584}
\frac 1{V_{k}^{*}}&\le \frac1{v^{*}(X(\theta,U_{k}), \dots, X(\theta,U_{1}),0,\dots)+\beta ^{k} v^{*}(\bx)}\\
&\quad +\frac 1{v^{*}(X(\theta',U_{k}), \dots, X(\theta',U_{1}),0,\dots)+\beta ^{k} v^{*}(\bx)}\\
&\quad+\frac 1{v^{*}(X(\theta,U_{k}), \dots, X(\theta,U_{1}),0,\dots)+\beta ^{k} v^{*}(X(\theta,U_{0}),\dots)}\\
&\quad+\frac 1{v^{*}(X(\theta',U_{k}), \dots, X(\theta',U_{1}),0,\dots)+\beta ^{k} v^{*}(X(\theta,U_{0}),\dots)}.
}
We apply \cref{le:345143} for $(Z_\ell)_{\ell\in-\Z}= (X(\theta,U_{\ell+k}))$ and $N=k$. Note that  for the filtration $(\cF_\ell)$ generated by $(U_\ell)$ one has $\P(X(\theta,U_{\ell})^2\ge \delta |\cF_{\ell-1}) = \P (X(\theta,U)^2\ge \delta)\ge q$ for all $\ell \in\Z$ so that the respective set $\cR_{N}$ in the lemma is indeed the full space. Consequently, one has for the event $A_k:=\bigl\{\sum_{\ell=1}^k \1_{\{X(\theta,U_\ell)^2\ge \delta\}}\ge 2\bigr\}$
\bas{
\E&\Bigl[\frac1{v^{*}(X(\theta,U_{k}), \dots, X(\theta,U_{1}),0,\dots)+\beta ^{k} (v^{*}(\bx))}\Bigr]\\
&\le \E\Bigl[\1_{A_k} \frac1{v^{*}(X(\theta,U_{k}), \dots, X(\theta,U_{1}),0,\dots)} +\1_{A_k^c} \frac 1{\beta ^{k} (v^{*}(\bx))}\Bigr]\\
&\le \frac\beta{(1-\beta)} \Bigl(\frac { q}{\beta-(1-q)}\Bigr)^2\, \delta^{-1}+ (1-q)^{k-1} (1+(k-1)q) \frac 1{\beta ^{k} (v^{*}(\bx))}\\
&\le C_1 \delta^{-1} + C_2 \frac 1{v^{*}(\bx)},
}
where $C_1:= \frac\beta{(1-\beta)} \bigl(\frac { q}{\beta-(1-q)}\bigr)^2$ and 
$C_2:=\sup_{k'\in\N} (1-q)^{k'-1} (1+(k'-1)q)\beta^{-k'}$. Note that $C_2$ is finite since 
$1-q<\beta$ by assumption. Thus we have bounded the expectation of the first expression on the right-hand side of \cref{eq:87e5584}.
In complete analogy one gets that
\bas{
\E\Bigl[&\frac1{v^{*}(X(\theta',U_k),\dots, X(\theta',U_{1}))+\beta^{k} v^{*}(\bx)}\Bigr]
\le  C_1\delta^{-1}+\frac{C_2}{ v^{*}(\bx)}.
}
Next, consider the third term. Note that $A_k^c$ and $v^{*}(X(\theta,U_{0}),\dots)$ are independent. Consequently,
\bas{
\E&\Bigl[\frac 1{v^{*}(X(\theta,U_{k}), \dots, X(\theta,U_{1}),0,\dots)+\beta ^{k} v^{*}(X(\theta,U_{0}),\dots)}\Bigr]\\
&\le \E\Bigl[\1_{A_k} \frac 1{v^{*}(X(\theta,U_{k}), \dots, X(\theta,U_{1}),0,\dots)}\Bigr]+\beta ^{k} \, \underbrace {\E\Bigl[\1_{A_k^c} \frac 1{v^{*}(X(\theta,U_{0}),\dots)}\Bigr]}_{=\P(A_k^c)\, \E\bigl[ \frac 1{v^{*}(X(\theta,U_{0}),\dots)}\bigr]}.
}
We proceed similarly as above but additionally use that \cref{le:345143} applied  with $n^*=\infty$ gives that $\E\bigl[ \frac 1{v^{*}(X(\theta,U_{0}),\dots)}\bigr]\le C_1\delta^{-1}$. This leads to
\bas{
\E\Bigl[\frac 1{v^{*}(X(\theta,U_{k}), \dots, X(\theta,U_{1}),0,\dots)+\beta ^{k} v^{*}(X(\theta,U_{0}),\dots)}\Bigr]\le C_1\delta^{-1} +C_1C_2 \delta^{-1}.
}
The fourth term on the right-hand side of \cref{eq:87e5584} satisfies the identical bound and we get that
\bas{
\E\Bigl[\frac1{V_{k}^{*}}\Bigr]\le (4C_1 +2C_2 C_1)\delta^{-1}+ 2C_2 \frac1{v^{*}(\bx)}.
}
We combine this estimate with \cref{eq923457}, \cref{eq:24751} and~\cref{eq97356-2} and  get that
\bas{
 \E\bigl[|g(&X(\theta,U_{k}), \dots, X(\theta,U_{1}),\bx)-g(X(\theta,U_{k}),\dots)\\
&\qquad -\bigl(g(X(\theta',U_{k}),\dots,X(\theta',U_{1}),\bx)-g(X(\theta',U_{k}),\dots,X(\theta',U_{1}), X(\theta,U_{0}),\dots)\bigr)|\bigr]\\
&\le C_3 (\delta^{-1/2}+ v^*(\bx)^{-1/2}) (\|\bx \|_{\ell_\varrho}+ \E[X(\theta,U)^2]^{1/2} ) \beta^{k/2} \E[(X(\theta,U)-X(\theta',U))^2]^{1/2} 
}
with the constant $C_3$ only depending on the damping factors and $q$.
Together with~\cref{eq:32761} and \cref{eq:32761-3}
we finally get that
\bas{
|\Psi_\theta(\bx)-\Psi_{\theta'}(\bx)|&\le C_4 \bigl(1+(\delta^{-1/2}+ v^*(\bx)^{-1/2}) (\|\bx \|_{\ell_\varrho}+ \E[X(\theta,U)^2]^{1/2} )\bigr)  \E[(X(\theta,U)-X(\theta',U))^2]^{1/2} 
}
with the constant $C_4$ only depending on the damping factors and $q$.
The multivariate setting follows by applying this estimate on the individual components and using the triangle inequality.
\end{proof}

\section{Technical estimates}
\label{sec4}

In this section, we prove several technical results as preparation for the proof of the main result.
We start with deducing properties that follow from assumptions~\cref{eq:237856-1} and~\cref{eq:237856-3}
on the sequence of step-sizes $ ( \gamma_n )_{ n \in \N } $.

\begin{lemma}
\label{le:28732456}
Suppose that the $(0,\infty)$-valued sequence of step-sizes $(\gamma_n)_{ n \in \N } $ is decreasing
and satisfies~\cref{eq:237856-1} and~\cref{eq:237856-3}.
\begin{enumerate}[label=(\roman*)]
\item
\label{le:28732456_item_i}
One has $\lim_{n\to\infty} \sqrt{n} \gamma_n=0$ and $\lim_{n\to\infty}n\gamma_n=\infty$.
\item
\label{le:28732456_item_ii}
For a $\N$-valued sequence $(\rho_n)$ with $\rho_n=\mathcal O(\sqrt n)$ one has that
\bas{
\lim_{n\to\infty} \frac{\gamma_{n-\rho(n)}}{\gamma_n}=1.
}
\item
\label{le:28732456_item_iii}
There exist two $ \N $-valued sequences $ ( \ell(n) )_{ n \in \N } $
and $ ( r(n) )_{ n \in \N } $ satisfying
for all $n\in\N$ that $0\le \ell(n)\le r(n)\le n$ and
\bas{
\lim_{n\to\infty} \ell(n)=\infty,
\qquad \lim_{n\to\infty} \frac{\ell(n)+n-r(n)}{n}=0
\qquad \text{and}
\qquad
\lim_{n\to\infty} t_n-t_{r(n)}=\infty.
}
\end{enumerate}
\end{lemma}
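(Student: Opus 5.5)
The three items are elementary consequences of the two step-size conditions and of monotonicity. The order matters: first show $\gamma_n\to0$ together with the rate $\sqrt n\,\gamma_n\to0$, then use \cref{eq:237856-1} to control the increments of $1/\gamma_n$, and finally build the sequences in item~\ref{le:28732456_item_iii} by hand.

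\emph{Item~\ref{le:28732456_item_i}.} Since $(\gamma_n)$ is decreasing,
\[
\textstyle\sum_{k=1}^n \gamma_k^{(1+\lambda)/2}\ge n\,\gamma_n^{(1+\lambda)/2}.
\]
Hence \cref{eq:237856-3} gives $\sqrt n\,\gamma_n^{(1+\lambda)/2}\to0$, that is, $\gamma_n=o(n^{-1/(1+\lambda)})$. As $\lambda\in(0,1]$, we have $1/(1+\lambda)\ge 1/2$, and therefore
\[
\sqrt n\,\gamma_n=o\bigl(n^{1/2-1/(1+\lambda)}\bigr)=o(1).
\]
For the second claim, write
\[
\frac1{\gamma_{n+1}}-\frac1{\gamma_n}=\frac{\gamma_n-\gamma_{n+1}}{\gamma_n^2}\cdot\frac{\gamma_n}{\gamma_{n+1}}.
\]
By \cref{eq:237856-1} the first factor tends to $0$. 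The ratio satisfies $\gamma_n/\gamma_{n+1}\to1$: indeed $(\gamma_n-\gamma_{n+1})/\gamma_n=o(\gamma_n)\to0$, since $\gamma_n\to0$ by what we just proved. Consequently $1/\gamma_{n+1}-1/\gamma_n\to0$. By a Cesàro argument $1/\gamma_n=o(n)$, i.e.\ $n\gamma_n\to\infty$.

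\emph{Item~\ref{le:28732456_item_ii}.} Use the same telescoping for $1/\gamma$. With $\rho_n\le c\sqrt n$ (and $n-\rho_n\to\infty$),
\[
0\le \frac1{\gamma_n}-\frac1{\gamma_{n-\rho_n}}=\sum_{k=n-\rho_n}^{n-1}\Bigl(\frac1{\gamma_{k+1}}-\frac1{\gamma_k}\Bigr)=o(\rho_n)=o(\sqrt n).
\]
From item~\ref{le:28732456_item_i} we have $\sqrt n\,\gamma_n\to0$, so $1/\gamma_n\gg\sqrt n$. Dividing by $1/\gamma_n$ gives $\gamma_{n}/\gamma_{n-\rho_n}\to1$.

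\emph{Item~\ref{le:28732456_item_iii}.} Set $\eps_n:=(n\gamma_n)^{-1/2}$, which tends to $0$ by item~\ref{le:28732456_item_i}. Choose
\[
\ell(n):=\lfloor\sqrt n\rfloor,\qquad r(n):=n-\lceil\eps_n n\rceil .
\]
For the finitely many small $n$ where $\ell(n)\le r(n)$ fails, redefine both as, say, $\ell(n)=r(n)=0$. Then $\ell(n)\to\infty$ and $(\ell(n)+n-r(n))/n\le n^{-1/2}+\eps_n+1/n\to0$. By monotonicity of $(\gamma_k)$,
\[
t_n-t_{r(n)}=\sum_{k=r(n)+1}^n\gamma_k\ge (n-r(n))\gamma_n\ge \eps_n n\gamma_n=\sqrt{n\gamma_n}\to\infty .
\]

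The only point requiring care is in item~\ref{le:28732456_item_i}: we must derive $\gamma_n\to0$ from \cref{eq:237856-3} before invoking \cref{eq:237856-1}. This is needed to justify $\gamma_n/\gamma_{n+1}\to1$ and hence the vanishing increments of $1/\gamma_n$. Everything else is routine.
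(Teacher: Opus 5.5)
Your proof is correct and follows essentially the paper's route: the monotonicity lower bound on the step-size sums gives $\sqrt n\,\gamma_n\to0$, the vanishing increments $1/\gamma_{n+1}-1/\gamma_n\to0$ drive both $n\gamma_n\to\infty$ and item~(ii), and item~(iii) uses windows of relative length tending to zero together with $n\gamma_n\to\infty$. Your execution is somewhat cleaner in places. You use Ces\`aro instead of the comparison with $c/n$, additive telescoping of $1/\gamma_k$ instead of the product $\prod(1+\gamma_k)$, and an explicit $\varepsilon_n=(n\gamma_n)^{-1/2}$ instead of diagonalisation. You also keep the exponent $(1+\lambda)/2$ where the paper tacitly replaces $\gamma_k^{(1+\lambda)/2}$ by $\gamma_k$. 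One small correction: $\gamma_n/\gamma_{n+1}\to1$ needs only the boundedness $\gamma_n\le\gamma_1$, not $\gamma_n\to0$.
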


\begin{proof}
We start with the proof of \cref{le:28732456_item_i}.
As consequence of \cref{eq:237856-3} and the monotonicity of $(\gamma_n)$
we have that
\bas{
\lim_{n\to\infty} \frac 1{\sqrt n} \underbrace{\sum_{k=1}\gamma_k}_{\ge n\gamma_n}=0
}
so that $\lim_{n\to\infty}\sqrt n \gamma_n=0$. 
Next, we prove that $\lim_{n\to\infty}n\gamma_n=\infty$.
Note that \cref{eq:237856-1} implies that also
\bas{\label{eq:2357663}
\lim_{n\to\infty}\frac { \gamma_{n}- \gamma_{n+1}}{ \gamma_{n}\gamma_{n+1}}=0
}
so that for arbitrarily fixed $\eps\in(0,\infty)$ we have for sufficiently large $n$ that
\bas{\label{eq:3487658}
\frac { \gamma_{n}- \gamma_{n+1}}{ \gamma_{n}\gamma_{n+1}}\le \eps
}
which is equivalent to the inequality
\bas{\label{eq:6351}
\gamma_{n+1}\ge \frac1{1+\eps \gamma_n} \gamma_n.
}
We set $\psi_\eps(x)=\frac1{1+\eps x} x$ $(x\in(0,\infty))$ and observe that $\psi_\eps$ is monotonically increasing (at least in a neighbourhood of $0$). 
Let us consider for arbitrarily fixed $c\in(0,\infty)$ the function  $\bar \gamma_c(n)= c \frac 1n$ $(n\in\N)$. Then one has for every $n\in\N$ that 
\bas{
\bar \gamma_{n+1}=c \frac1{n+1}\ge \bar \gamma_n -c \frac 1{n^2}.
}
Conversely, we have that 
\bas{
\psi_\eps(\bar \gamma_n)=\frac1{1+\eps \bar \gamma_n} \bar \gamma_n=\bar\gamma_n- \eps c^2\frac 1{1+\eps\bar\gamma_n}\frac1{n^2}.
}
Provided that $\eps c<1$ we get existence of an $n_0(\eps,c)\in\N$ such that for every $n\ge n_0(\eps,c)$
\bas{
\bar\gamma_{n+1}\ge \psi_\eps(\bar \gamma_n).
}

Now we proceed as follows choose $c\in(0,\infty)$ arbitrarily and $\eps\in(0,c^{-1})$. Choose $n_0$ such that \cref{eq:3487658}
is satisfied for all $n\ge n_0$. Moreover, suppose that $n_0$ is so large that $\gamma_{n_0}\le \bar\gamma_{n_0(\eps,c)}$. Then we have that by induction over $k\in\N_0$
\bas{
\gamma_{n_0+k}\ge \bar\gamma_{n_0(\eps,c)+k}.
}
In particular, we get that $\liminf_{n\to\infty} n\gamma_n\ge c$. Since $c\in(0,\infty)$ can be chosen arbitrarily we showed that $\lim_{n\to\infty} n\gamma_n=\infty$.

Next, we prove \cref{le:28732456_item_ii}.
By assumption there exists $C\in(0,\infty)$ with $\rho(n)\le C\sqrt n$ for all $n\in\N$. In particular, we have that $\lim_{n\to\infty} n-\rho(n) =\infty$ and $n-\rho(n)\ge n/2$ for all but finitely many $n\in\N$. Recall that by~\cref{eq:6351}, we have that for all sufficiently large $n\in\N$, $\gamma_{n-1}\le (1+\gamma_{n-1})\gamma_n$ so that as long as $n$ is sufficiently large
\bas{
1&\le \frac{\gamma_{n-\rho(n)}}{\gamma_n}\le \prod_{k=n-\rho(n)} ^{n-1}(1+\gamma_k) \le \exp\Bigl\{ \sum _{k=n-\rho(n)} ^{n-1}\gamma_k \Bigr\} \le \exp\{ \rho(n)\,\gamma_{n-\rho(n)} \}\\
& \le \exp\bigl\{ \rho(n)\,\gamma_{\lceil n/2\rceil} \bigr\}\le \exp\bigl\{\sqrt 2 C \underbrace{\sqrt {n/2} \,\gamma_{\lceil n/2\rceil}}_{\to 0} \bigr\}\to 1\qquad (n\to\infty).
} 
This shows \cref{le:28732456_item_ii}.

It remains to prove \cref{le:28732456_item_iii}.
For every $ \varepsilon \in ( 0, \nicefrac{1}{2} ) $ we consider
\bas{
\ell_\eps(n)=\lfloor\eps n\rfloor \text{ \ \ and \ \ }r_\eps(n)=\lceil(1-\eps) n\rceil \qquad (n\in\N).
}
Then
\bas{
\lim_{n\to\infty} \ell_\eps(n)=\infty, \ \lim_{n\to\infty} \frac{\ell_\eps(n)+n-r_\eps(n)}{n}=2\eps \text{ \ and \ } \lim_{n\to\infty}\underbrace{ t_n-t_{r(n)}}_{\ge (n-r_\eps(n))\gamma_n}=\infty.
}
Applying a diagonalisation argument yields a zero-sequence $(\eps_n)$ such that the statement is true for the choice $\ell(n)=\ell_{\eps_n}(n)$ and $r(n)=r_{\eps_n}(n)$.
\end{proof}

\begin{prop}
\label{prop:7356}
We assume the setting of \cref{thm-1}.
Let $n^*\in\N$ and consider the stopping time
\bas{ T_{n^*}'=\inf\{n\ge n^*: \theta_n\not \in V \}.
}
Moreover, let for all $k\in\N$, $r(k)=\lfloor k-(\log k)^2\rfloor$, $v^*$  as in \cref{def:v_star},
\bas{
\label{eq783549061}
  \cE'_k=\{\forall i\in\{1,\dots,d\}\exists \text{distinct }&\ell_1,\ell_2\in\{n^*+1,\dots, k-1\}\text{ with }
\\&
  (X^{(i)}(\theta_{\ell_1-1},U_{ \ell_1}))^2\wedge (X^{(i)}(\theta_{\ell_2-1},U_{ \ell_2}))^2\ge \delta\} ,
}
\bas{
\label{eq783549061_B}
  \tilde \cE'_k=\{\forall i\in\{1,\dots,d\}\exists \text{distinct }&\ell_1,\ell_2\in\{r(k),\dots, k-1\}\text{ with }
\\
&
  (X^{(i)}(\theta^*,U_{ \ell_1}))^2\wedge (X^{(i)}(\theta^*,U_{\ell_2}))^2\ge \delta\}
}
and
\bas{
\label{eq783549061-3}
  \tilde \cE''_k=\bigl\{\bigl\|(\1_{\{k+\ell<r(k)\}}\,X(\theta^*,U_{k+\ell}))_{\ell\in-\N_0}\bigr\|_{\ell_\varrho^d}\le 1\bigr\} .
}
One has
\bas{
\label{eq:683461}
  \lim_{ k \to \infty }
  \gamma_{ r( k ) }
  (\log k)^2/\sqrt{\gamma_{k-1}}=0
}
and there exists $ \kappa \in (0,\infty) $ such that the following is true:
\begin{enumerate}[label=(\roman*)]
\item
\label{prop:7356:item_i}
Almost surely, on $\{T_{n^*}'=\infty\}$, all but finitely of the events $(\cE'_k)_{k\in\N}$ occur and  for all $k\in\N$ with $k>n^*$ and all  $i\in\{1,\dots,d\}$
\bas{
\E\bigl[\1_{\{T_{n^*}'=\infty\}}\1_{\cE'_k}v^*(\bX^{(i)}(k))^{-1}\bigr]\le \kappa.
\label{eq:367122346-2}}
\item
\label{prop:7356:item_ii}
Almost surely,  all but finitely many of the events $(\tilde \cE'_k)_{k\in\N}$ and $(\tilde \cE''_k)_{k\in\N}$ occur and there exists  $\kappa\in(0,\infty)$ such that for all $k\in\N$ with $r(k)\ge1$ and all  $i\in\{1,\dots,d\}$
\bas{
\E\bigl[\1_{\tilde \cE_k'}v^*(\tilde \bX^{(i)}(k-1))^{-1}\big|\cF_{r(k)-1}\bigr]\le \kappa,\label{eq:367122346}}
where $\tilde \bX(k)=(X(\theta^*,U_{k+\ell}))_{\ell\in-\N_0}$.
\end{enumerate}
\end{prop}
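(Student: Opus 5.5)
The limit \cref{eq:683461} is a consequence of \cref{le:28732456}. Since $k-r(k)\le(\log k)^2+1=\mathcal O(\sqrt k)$, \cref{le:28732456_item_ii} gives $\gamma_{r(k)}/\gamma_{k-1}\to1$. Hence $\gamma_{r(k)}(\log k)^2/\sqrt{\gamma_{k-1}}$ behaves like $\sqrt{\gamma_{k}}\,(\log k)^2$. By \cref{le:28732456_item_i} we have $\gamma_k\le k^{-1/2}$ eventually, so this quantity is at most $k^{-1/4}(\log k)^2\to0$.

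For \cref{prop:7356:item_i} we apply \cref{le:345143} componentwise. Fix $i$ and work with the filtration $\cG$ obtained by reindexing $\cF$ so that time $\ell$ corresponds to position $\ell-k$. Put $Z_{\ell-k}=\1_{\{\ell>n^*\}}\1_{\{\theta_{\ell'-1}\in V\ \forall \ell'\in(n^*,\ell]\}}X^{(i)}(\theta_{\ell-1},U_\ell)$, and take $N=k-n^*-1$ (or the corresponding window). Since $U_\ell$ is independent of $\cF_{\ell-1}$, the regularity assumption (a) gives $\P(Z^2\ge\delta\mid\cG)\ge q$ on the event that $\theta_{\ell-1}\in V$. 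Thus $\{T'_{n^*}=\infty\}\subset\cR_N$, where stopping at $T'_{n^*}$ makes the localized variables adapted. Since $v^*$ is monotone in the squared entries, $v^*(\bX^{(i)}(k))\ge v^*$ of the truncated sequence. The first inequality of \cref{le:345143} then yields \cref{eq:367122346-2} with $\kappa=\frac\beta{1-\beta}(\frac q{\beta-(1-q)})^2\delta^{-1}$; a possible index shift $\ell\le k-1$ versus $k$ costs at most a factor $\beta^{-1}$. For the almost sure statement, use the coupling Bernoulli variables $I_\ell$ from the proof of \cref{le:345143}. On $\{T'_{n^*}=\infty\}$ we have $\delta I_\ell\le(X^{(i)}(\theta_{\ell-1},U_\ell))^2$ for all $\ell>n^*$. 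Since the $I_\ell$ are i.i.d.\ Bernoulli$(q)$ with $q>0$, infinitely many of them equal $1$ almost surely. Once two such indices exist below $k$ for every $i$, the event $\cE'_k$ holds for all larger $k$, because the window $\{n^*+1,\dots,k-1\}$ only grows.

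For \cref{prop:7356:item_ii} the variables $X(\theta^*,U_\ell)$ are i.i.d. For $\tilde\cE'_k$, the second bound of \cref{le:345143} with $N=k-r(k)\ge(\log k)^2$ gives
\[
\P\bigl((\tilde\cE'_k)^c\bigr)\le d\,(1-q)^{N-1}(1+(N-1)q)\le C\,e^{-c(\log k)^2},
\]
which is summable in $k$, so Borel--Cantelli applies. For $\tilde\cE''_k$, the quantity $\|(\1_{\{k+\ell<r(k)\}}X(\theta^*,U_{k+\ell}))_\ell\|_{\ell_\varrho^d}$ is bounded by $C\beta^{(k-r(k))/2}\sum_{j\ge0}\varrho'_{-j}|X(\theta^*,U_{r(k)-1-j})|$, using $\alpha<\sqrt\beta$ to compare $\varrho$ with $\beta^{-\cdot/2}$. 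The sum has the law of $\|\tilde\bX(0)\|$, which is integrable. Markov's inequality gives a tail bound $C\beta^{(\log k)^2/2}$, again summable, so Borel--Cantelli gives the almost sure statement. For \cref{eq:367122346}, note that $\tilde\cE'_k$ and the entries of $\tilde\bX(k-1)$ with indices in $[r(k),k-1]$ are independent of $\cF_{r(k)-1}$. On $\tilde\cE'_k$, $v^*$ is bounded below by $v^*$ of the window entries alone. Hence the conditional expectation equals an unconditional one, which is bounded by the first estimate of \cref{le:345143} with $\cR_N=\Omega$.

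The main obstacle is the adaptedness and localization in \cref{prop:7356:item_i}. The conditional probability bound holds only while $\theta_{\ell-1}\in V$, so the process has to be stopped at $T'_{n^*}$ so that $\cR_N\supseteq\{T'_{n^*}=\infty\}$. One must also check that the time reversal into the index set $-\N_0$ is compatible with the filtration used in \cref{le:345143}.
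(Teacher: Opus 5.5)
Your proposal is correct and follows the paper's proof. It applies \cref{le:345143} on the window of indices after $n^*$ (resp.\ $\{r(k),\dots,k-1\}$) with $\{T'_{n^*}=\infty\}\subseteq\cR_N$ (resp.\ $\cR_N=\Omega$), uses Borel--Cantelli through the second bound of that lemma, and uses independence of the window from $\cF_{r(k)-1}$ for \cref{eq:367122346}. Your additions fill in steps the paper leaves implicit: invoking \cref{le:28732456_item_i} of \cref{le:28732456} for \cref{eq:683461}, the coupling argument for the almost sure part of \cref{prop:7356:item_i}, and the first-moment tail bound for $\tilde\cE''_k$. Your stopping at $T'_{n^*}$ is harmless but unnecessary, since $\cR_N\supseteq\{T'_{n^*}=\infty\}$ already holds for the unstopped process.
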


\begin{proof}
The statement in \cref{eq:683461} is an
immediate consequence of \cref{le:28732456_item_ii} in \cref{le:28732456}.

Next, we prove the statement in \cref{prop:7356:item_ii}. Choose  $k_0\in\N$ such that for all $k\ge k_0$ we have that $r(k)<k$. Now fix $i\in\{1,\dots,d\}$ and $k\in\N$ with $k\ge k_0$. We apply \cref{le:345143} with $N= k-r(k)$ and $(Z_\ell)=(\1_{\{\ell > -N\}}X^{(i)}(\theta^*,U_{\ell+k-1}))_{\ell\in-\N_0}$ and note that the respective event $\cR_N$ is the sure event. Consequently,
we get that
\bas{
 \E\bigl[\1_{\tilde \cE'_k} v^*((Z_\ell)_{\ell\in-\N_0})\bigr]\le \kappa:= \frac\beta{(1-\beta)} \Bigl(\frac { q}{\beta-(1-q)}\Bigr)^2\, \delta^{-1}. 
}
We note that $\cF_{r(k-1)}$ and $\1_{\tilde \cE'_k} (Z_\ell)$ are independent so that
 \cref{eq:367122346} follows by noticing that almost surely
\bas{
\E\bigl[\1_{\tilde \cE'_k} v^*(\tilde \bX^{(i)}(k-1))^{-1}\big|\cF_{r(k)-1}\bigr]\le \E\bigl[\1_{\tilde \cE'_k} v^*((Z_\ell)_{\ell\in-\N_0})\big|\cF_{r(k)-1}\bigr]= \E\bigl[\1_{\tilde \cE'_k} v^*((Z_\ell)_{\ell\in-\N_0})\bigr].
}
Moreover, we note that by \cref{le:345143}  one has
\bas{
\P\Bigl(\sum_{m=r(k)}^{k-1} \1_{\{X^{(i)}(\theta^*,U_{\ell+k-1})^2\ge \delta\}}\le 1 \Bigr)\le  (1-q)^{k-r(k)-1} (1+(k-r(k)-1)q).
}
Since the choice of $i\in\{1,\dots,d\}$ was arbitrary we get with the Lemma of Borel-Cantelli that only finitely many of the events $((\tilde \cE'_k)^c)_{k\in\N}$ occur since
\bas{
\sum_{k=k_0}^\infty \P( (\tilde \cE'_k)^c)\le d \sum_{k=k_0}^\infty  (1-q)^{k-r(k)-1} (1+(k-r(k)-1)q)<\infty,
}
where the finiteness follows since $k-r(k)$ is asymptotically equivalent to $(\log k)^2$.
Analogously, we get that almost surely all but finitely many of the events $(\tilde\cE_k'')_{k\in\N}$ occur since 
\bas{
&
\sum_{k=k_0}^\infty \P\bigl(
\bigl\|(\1_{\{k+\ell<r(k)\}}\,X(\theta^*,U_{k+\ell}))_{\ell\in-\N_0}\bigr\|_{\ell_\varrho^d} \ge 1)
\\ & \le \sum_{k=k_0}^\infty \E\bigl[
\bigl\|(\1_{\{k+\ell<r(k)\}}\,X(\theta^*,U_{k+\ell}))_{\ell\in-\N_0}\bigr\|_{\ell_\varrho^d}^2\bigr]\\
&=\sum_{k=k_0}^\infty \sum _{\substack{ \ell \in -\N_0:\\ \ell<r(k)-k}} \varrho_\ell \,\E[|X(\theta^*,U_{0})|^2]\\
&\le\rho_0\,(1-\sqrt{\beta})^{-1}\E[|X(\theta^*,U_{0})|^2] \sum_{k=k_0}^\infty \sum _{\substack{ \ell \in -\N_0:\\ \ell<r(k)-k}} \sqrt\beta^{-\ell} <\infty,
}
where the sum is finite since $k-r(k)\sim (\log k)^2$.

It remains to show \cref{prop:7356:item_i}. Fix $i\in\{1,\dots,d\}$ and $k\in\N$ with $k>n^*$ and apply \cref{le:345143} with $N=k-n^*$ and $(Z_{\ell})=(\1_{\{\ell>-N\}} X(\theta_{k+\ell-1},U_{k+\ell}))_{\ell\in-\N_0}$. Note that the respective set~$\cR_N$ contains $\{T_{n^*}'=\infty\}$ so that
\bas{
\E\bigl[\1_{\{T_{n^*}'=\infty\}}\1_{\cE'_k}v^*(\bX^{(i)}(k))^{-1}\bigr]\le \E\bigl[\1_{\cR_N}\1_{\{\sum_{m=-N+1}^{0} \1_{\{Z_m^2 \ge \delta\}}\ge 2\}}v^*((Z_\ell)_{\ell\in-\N_0})^{-1}\bigr]\le \kappa
}
which proves \cref{eq:367122346}. To see that all but finitely many $( \cE'_k)_{k_\in\N}$ occur on $\{T_{n^*}=\infty\}$ is straight-forward.
\end{proof}

The main idea of the proof is to  linearise the action of the vector field in the neighbourhood of an attractor $\theta^*$. For this we use results deduced in \cite{Der21_MR4184368} to write linearised approximations to the real algorithm. In order to show that the linearisation error is sufficiently small we need to control the distance of the algorithm to the attractor. For this we use the main result of \cite{DereichAdamconvergence2024}.
In the remaining part of this section, we prepare these results for our proofs.

First we provide a classical result of linear algebra.
\begin{lemma}\label{leno_2_exa}
Let $H\in\R^{d\times d}$, $L\in(0,\infty)$ such that
$$
\sup\{\mathrm{Re}(\lambda): \lambda\text{ e.v.\ of }H\}<-L,
$$
then there exists a norm $\|\cdot\|$ on $\R^{d\times d}$ induced by a scalar product on $\R^d$ and $\eps_0>0$ such that for all $\eps\in[0,\eps_0]$ for the induced matrix norm
\bas{\label{eq:34653}
\|\1+\eps H\|\leq 1-\eps L.
}
\end{lemma}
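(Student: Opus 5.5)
The plan is the classical Lyapunov-equation argument. Since every eigenvalue of $H$ has real part $<-L$, the matrix $H+L\1$ is Hurwitz. Define the symmetric positive definite matrix
\[
P=\int_0^\infty e^{s(H+L\1)^{\dagger}}e^{s(H+L\1)}\,\dd s .
\]
The integral converges because $\|e^{s(H+L\1)}\|\le c\,e^{-\eta s}$ for some $c,\eta>0$, which follows from the spectral bound, for instance via the Jordan normal form. The matrix $P$ satisfies the Lyapunov equation
\[
(H+L\1)^{\dagger}P+P(H+L\1)=-\1 .
\]
Equip $\R^d$ with the scalar product $\langle x,y\rangle_P=x^{\dagger}Py$ and take $\|\cdot\|$ to be the induced operator norm on $\R^{d\times d}$. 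From the Lyapunov equation one gets
\[
\langle x,Hx\rangle_P\le -L\|x\|_P^2-\tfrac12|x|^2\le -\bigl(L+\tfrac{1}{2\lambda_{\max}(P)}\bigr)\|x\|_P^2 ,
\]
so the strict inequality is preserved with a margin $\mu:=1/(2\lambda_{\max}(P))>0$.

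Next expand, for $\eps\ge0$ and $x\in\R^d$,
\[
\|(\1+\eps H)x\|_P^2=\|x\|_P^2+2\eps\langle x,Hx\rangle_P+\eps^2\|Hx\|_P^2\le\bigl(1-2\eps(L+\mu)+\eps^2K^2\bigr)\|x\|_P^2 ,
\]
where $K$ is the $P$-operator norm of $H$. It then suffices to have
\[
1-2\eps(L+\mu)+\eps^2K^2\le(1-\eps L)^2=1-2\eps L+\eps^2L^2 ,
\]
which is equivalent to $\eps(K^2-L^2)\le 2\mu$. Hence $\eps_0=\min\{2\mu/\max(K^2-L^2,\,1),\ 1/L\}$ works. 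The extra condition $\eps_0\le 1/L$ keeps $1-\eps L\ge0$, so square roots can be taken. Taking square roots and the supremum over $\|x\|_P=1$ yields \eqref{eq:34653}.

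The only genuinely nontrivial point is the first step, namely obtaining a norm in which $H$ is strictly dissipative with rate $L+\mu$, strictly more than $L$. The margin $\mu$ is what absorbs the $\eps^2$ term. This is handled by the Lyapunov construction applied to the shifted matrix $H+L\1$. An alternative route would use a complex Jordan basis with off-diagonal entries rescaled to be small, but the Lyapunov route directly gives a real inner product, as the statement requires.
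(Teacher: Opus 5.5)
Your proof is correct and complete. The Lyapunov identity actually gives equality, $\langle x,Hx\rangle_P=-L\|x\|_P^2-\tfrac12|x|^2$. The margin $\mu=1/(2\lambda_{\max}(P))$ absorbs the $\eps^2$ term exactly as you compute, and the cap $\eps_0\le 1/L$ is what lets you take square roots. A minor remark: by Cauchy--Schwarz $K\ge L+\mu>L$, so the $\max(\cdot,1)$ in your choice of $\eps_0$ is harmless but unnecessary.

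The paper gives no proof of this lemma; it simply quotes it as a classical fact of linear algebra. There is therefore no competing argument to compare with, and your Lyapunov construction supplies the standard missing proof. It also yields directly the strict dissipativity $\langle x,Hx\rangle_P\le-(L+\mu)\|x\|_P^2$. The paper recovers dissipativity only afterwards, in the proof of \cref{prop:23553}, and only in the weaker form with constant $L$, by letting $\eps\downarrow 0$ in \cref{eq:34653}.
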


The proof of the main theorem is based on a linearisation of the vector field $f$ in the attractor~$\theta^*$.
For a fixed matrix  $H\in \R^{d\times d}$ and $(0,\infty)$-valued sequence $(\gamma_n)_{n\in\N}$   we consider
\bas{
\Pi[m,n]=\prod_{k=m+1}^{n}(\1+\gamma_{k}H)\text{ \ for \  $m,n\in\N_{0}$ with $m\le n$}
}
and 
\bas{
\bar\Pi[m,n]= \gamma_{m}\sum_{\ell=m}^{n}\Pi[m,\ell], \text{ \ for \  $m,n\in\N$ with $m\le n$}
}
in complete analogy to \cref{sec2_1}.

We cite an implication  of \cite[Lemma 3.6]{Der21_MR4184368}.
\begin{lemma}
\label{le:H-1}
Let  $\|\cdot\|$ be a norm on $\R^{d\times d}$ and suppose that the following is true:
\begin{enumerate}
\item[(L0)] $ \lim_{n\to\infty} \gamma_n=0$,
\item[(L1)] $ \exists \, L,\eps_{0}\in(0,\infty)$  $\forall \eps\in[0,\eps_0]:$ $\|\1+\eps H\|\leq 1-\eps L$ and 
\item[(L3)]  $\displaystyle{\lim_{n\to\infty} \frac{\gamma_n-\gamma_{n+1}}{\gamma_n^2}=0.}$
\end{enumerate}
Then the matrices $(\bar\Pi[m,n])_{1\le m\le n}$ converge to $-H^{-1}$ when letting
 $m,n\to\infty$ with $t_n-t_m\to\infty$. Furthermore,  the matrices $(\bar\Pi[m,n])_{1\le m\le n}$  are uniformly bounded.
 \end{lemma}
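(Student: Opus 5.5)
The plan is to prove both claims from an exact telescoping identity for the products $\Pi[m,\ell]$, together with an exponential decay bound on these products. Assumption (L3) enters only to compare $\gamma_m$ with $\gamma_\ell$ over stretches of bounded (or slowly growing) training time.

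\emph{Step 1 (decay of $\Pi$ and comparison of step-sizes).} By (L0) there is $m_0$ with $\gamma_k\le\eps_0$ for $k>m_0$. By (L1), for $m_0\le m\le \ell$,
\[
\|\Pi[m,\ell]\|\le\prod_{k=m+1}^{\ell}(1-\gamma_kL)\le e^{-L(t_\ell-t_m)}.
\]
For the finitely many $m<m_0$ one factors off the first $m_0-m$ matrices, which only costs a constant.

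Next I would convert (L3) into a statement about $1/\gamma$. Fix $\eta>0$. For $k$ large, (L3) gives $\log(\gamma_k/\gamma_{k+1})\le 2\eta\gamma_{k+1}$. Summing over $k$ yields, for $m$ large and $\ell\ge m$,
\[
1\le \gamma_m/\gamma_\ell\le \exp\bigl(2\eta (t_\ell-t_m)\bigr).
\]
This is the key comparison estimate.

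\emph{Step 2 (uniform boundedness).} Take $\eta<L/4$ in Step 1. Then
\[
\|\bar\Pi[m,n]\|\le \gamma_m\sum_{\ell=m}^n e^{-L(t_\ell-t_m)}\le \sum_{\ell=m}^n \gamma_\ell\, e^{-(L-2\eta)(t_\ell-t_m)}.
\]
The right-hand side is a Riemann sum of $\int_0^\infty e^{-(L/2)s}\,\dd s$ with mesh $\gamma_\ell\to0$. Since the integrand is monotone, the sum is bounded by $\gamma_m+2/L$. The remaining finitely many small $m$ are handled as in Step 1, so the matrices $\bar\Pi[m,n]$ are uniformly bounded.

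\emph{Step 3 (identification of the limit).} From $\Pi[m,\ell]=(\1+\gamma_\ell H)\Pi[m,\ell-1]$ we get the telescoping identity
\[
H\sum_{\ell=m+1}^{n+1}\gamma_\ell\Pi[m,\ell-1]=\Pi[m,n+1]-\1 .
\]
Hence
\[
H\bar\Pi[m,n]=\Pi[m,n+1]-\1+R_{m,n},\qquad R_{m,n}=H\sum_{\ell=m}^{n}(\gamma_m-\gamma_{\ell+1})\Pi[m,\ell].
\]
The first term satisfies $\|\Pi[m,n+1]\|\le e^{-L(t_{n+1}-t_m)}\to0$ because $t_n-t_m\to\infty$.

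For the remainder, Step 1 gives $0\le\gamma_m-\gamma_{\ell+1}\le\gamma_{\ell+1}\bigl(e^{2\eta(t_{\ell+1}-t_m)}-1\bigr)$. The same Riemann-sum comparison then yields
\[
\|R_{m,n}\|\le \|H\|\Bigl(\int_0^\infty (e^{2\eta s}-1)e^{-L s/2}\,\dd s+o(1)\Bigr),
\]
where the $o(1)$ absorbs the mesh and the shift $\gamma_{\ell+1}$. The integral tends to $0$ as $\eta\downarrow0$. Since $\eta$ may be taken arbitrarily small once $m$ is large, $R_{m,n}\to0$.

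Because all eigenvalues of $H$ have negative real part, $H$ is invertible. Therefore $\bar\Pi[m,n]=H^{-1}(\Pi[m,n+1]-\1+R_{m,n})\to -H^{-1}$.

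The main obstacle is Step 3's control of $R_{m,n}$: the Riemann-sum comparison must be uniform in $n$, and the dependence of $\eta$ on $m$ must be handled correctly. The clean way is to fix $\eta$ first, choose $m$ beyond the threshold at which (L3) holds with that $\eta$, and only afterwards let $\eta\downarrow0$.
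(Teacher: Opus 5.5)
Your argument is correct, but it takes a different route from the paper, which does not prove this lemma itself. The paper only notes that (L3) gives $\gamma_{n+1}^{-1}-\gamma_n^{-1}=(\gamma_n-\gamma_{n+1})/(\gamma_n\gamma_{n+1})\to0$. It then invokes \cite[Lemma~3.6]{Der21_MR4184368} with weights $b_n\equiv1$, and handles the finitely many small $m$ by hand, essentially as you do by factoring off $\Pi[m,m_1]$.

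You instead give a self-contained proof. The exact telescoping identity $H\sum_{\ell=m}^{n}\gamma_{\ell+1}\Pi[m,\ell]=\Pi[m,n+1]-\1$ is a discrete analogue of $\int_0^\infty e^{sH}\,\dd s=-H^{-1}$. The only defect is the mismatch between $\gamma_m$ and $\gamma_{\ell+1}$. You control it via $\gamma_m/\gamma_\ell\le e^{2\eta(t_\ell-t_m)}$, which is the summed form of the same reformulation of (L3).

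The citation buys brevity and a more general statement (general weights $b_n$). Your route buys explicit constants, e.g.\ $\sup_{n\ge m}\|\bar\Pi[m,n]\|\le\gamma_m+2/L$ and $\sup_{n\ge m}\|R_{m,n}\|=O(\eta)$ for large $m$. It also shows that $R_{m,n}\to0$ uniformly in $n$, so $t_n-t_m\to\infty$ is needed only to kill $\Pi[m,n+1]$.

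Several points you use silently should be stated.

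\emph{Submultiplicativity.} You need it for $\|\Pi[m,\ell]\|\le\prod_k\|\1+\gamma_kH\|$; for $\|HA\|\le\|H\|\,\|A\|$, norm equivalence would suffice. This is legitimate because (L1) is meant for an operator norm induced by a scalar product, as furnished by \cref{leno_2_exa}. For a general norm on $\R^{d\times d}$, such as the max-entry norm, (L1) only forces the diagonal entries of $H$ to be $\le -L$, and the lemma fails. An example is $d=2$ with diagonal entries $-1$ and off-diagonal entries $10$, which has eigenvalue $9$.

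\emph{Invertibility of $H$.} It is not among (L0)--(L3), but it follows from (L1): if $Hv=0$ with $v\neq0$, then $\|\1+\eps H\|\ge1$.

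\emph{Monotonicity.} The inequalities $1\le\gamma_m/\gamma_\ell$ and $\gamma_m-\gamma_{\ell+1}\ge0$ use the monotonicity of $(\gamma_n)$ from the setting of \cref{thm-1}.

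\emph{The Riemann-sum bound for $R_{m,n}$.} The integrand $(e^{2\eta s}-1)e^{-Ls}$ is not monotone. The cleanest way to make your bound rigorous is to write it as $e^{-(L-2\eta)s}-e^{-Ls}$ and treat each decreasing exponential separately. Bound the first sum above by the integral, and the second below by the integral times $e^{-L\sup_{j>m}\gamma_j}$.
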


\begin{proof}
First note that (L3) implies that $\lim_{n\to\infty}\frac {\gamma_{n+1}}{\gamma_n}=1$ so that (L3) also entails that \bas{
\lim_{n\to\infty} \frac{\gamma_n-\gamma_{n+1}}{\gamma_n\gamma_{n+1}}=0.
}
We now apply \cite[Lemma 3.6]{Der21_MR4184368} with the sequence $(b_{n})_{n\in\N}\equiv1$  so that $\cH[m,n]=\Pi[m,n]$ and $\bar \cH[m,n]=\bar \Pi[m,n]$. The lemma implies the first statement and the second statement for all matrices $(\bar\Pi[m,n])_{n_{0}\le m\le n}$ with $n_{0}\in\N$ being sufficiently large. To get the statement as in the lemma is straight-forward.
\end{proof}

\begin{prop}
\label{prop:23553}
Under the assumptions of \cref{thm-1} there exists $R,c_1\in(0,\infty)$ and a norm $\|\cdot\|$ on $\R^d$ induced by a scalar product $\llangle\cdot,\cdot\rrangle$ such that for all $x\in \overline {\IB(\theta^*,R)}$ one has
\bas{\label{eq:352345}
\llangle f(x)-f(\theta^*),x-\theta^*\rrangle \le -c_1\|x-\theta^*\|^2.
}
The matrices $(\bar\Pi[m,n])_{1\le m\le n}$ (as defined in~\cref{def:bar_Pi}) converge to $-H^{-1}$ when letting
 $m,n\to\infty$ with $t_n-t_m\to\infty$. Furthermore,  the matrices $(\bar\Pi[m,n])_{1\le m\le n}$  are uniformly bounded.
\end{prop}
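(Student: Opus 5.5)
The plan is to derive both statements directly from \cref{leno_2_exa} and \cref{le:H-1}, using assumption (b) of \cref{thm-1}. Since $\sup\{\mathrm{Re}(\varrho)\}<0$ for the eigenvalues of $H$, we fix $L\in(0,\infty)$ with $\sup\{\mathrm{Re}(\varrho)\}<-L$. \Cref{leno_2_exa} then provides a scalar product $\llangle\cdot,\cdot\rrangle$ with induced norm $\|\cdot\|$ and an $\eps_0>0$ such that $\|\1+\eps H\|\le 1-\eps L$ for all $\eps\in[0,\eps_0]$. This single norm is used for both parts of the proposition.

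For \cref{eq:352345} we first show $\llangle Hy,y\rrangle\le -L\|y\|^2$ for all $y$. Indeed, for small $\eps>0$,
\[
\|y\|^2+2\eps\llangle Hy,y\rrangle+\eps^2\|Hy\|^2=\|(\1+\eps H)y\|^2\le(1-\eps L)^2\|y\|^2 .
\]
Subtracting $\|y\|^2$, dividing by $\eps$ and letting $\eps\downarrow0$ gives the claim. Next, since $f(\theta^*)=0$, we write $f(x)-f(\theta^*)=H(x-\theta^*)+r(x)$, where $|r(x)|\le C|x-\theta^*|^{1+\lambda}$ on $V$ by \cref{eq:237462}. All norms on $\R^d$ are equivalent, so there is $c'$ with $|\llangle r(x),x-\theta^*\rrangle|\le c'C\|x-\theta^*\|^{2+\lambda}$. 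Choose $R>0$ so that $\overline{\IB(\theta^*,R)}\subset V$ and $c'CR^\lambda\le L/2$. Then \cref{eq:352345} holds with $c_1=L/2$. Here the ball may be taken in the $\|\cdot\|$-norm, or in the Euclidean norm after shrinking $R$.

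For the statement on $\bar\Pi$ we verify the hypotheses of \cref{le:H-1} with the operator norm just constructed. (L1) is exactly the conclusion of \cref{leno_2_exa}. (L3) is assumption \cref{eq:237856-1}. (L0), i.e.\ $\gamma_n\to0$, follows from \cref{le:28732456_item_i} of \cref{le:28732456}, which gives $\sqrt n\gamma_n\to0$. \Cref{le:H-1} then yields convergence of $\bar\Pi[m,n]$ to $-H^{-1}$ as $m,n\to\infty$ with $t_n-t_m\to\infty$, together with uniform boundedness. The latter is independent of the norm because all norms on $\R^{d\times d}$ are equivalent.

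The only slightly delicate step is deriving the quadratic-form inequality $\llangle Hy,y\rrangle\le -L\|y\|^2$ from the contraction estimate, and the first-order expansion above handles it. The rest is bookkeeping about norm equivalence and choosing $R$ small enough that the remainder term is absorbed.
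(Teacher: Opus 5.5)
Your proposal is correct and follows the paper's proof essentially step for step. You use the same norm from \cref{leno_2_exa}, the same first-order expansion of $\|(\1+\eps H)y\|^2$ to get $\llangle Hy,y\rrangle\le -L\|y\|^2$, absorption of the linearisation error on a small ball to obtain $c_1=L/2$, and \cref{le:H-1} for the $\bar\Pi$-statement; your explicit checks that $\overline{\IB(\theta^*,R)}\subset V$ and that (L0) follows from \cref{le:28732456_item_i} of \cref{le:28732456} fill in details the paper leaves implicit.
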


\begin{proof}
Under the assumption of \cref{thm-1}, \cref{leno_2_exa} is applicable and we can pick $L,\eps_0\in(0,\infty)$ and a norm $\|\cdot\|$ on $\R^d$ being induced by a scalar product $\llangle\cdot,\cdot\rrangle$ such that~\cref{eq:34653} is satisfied for all $\eps\in[0,\eps_0]$ and the second part of the statement is an immediate consequence of \cref{le:H-1}.

To prove the inequality we first observe that for  all $y\in\R^d\backslash\{0\}$ and $\eps\in[0,\eps_0]$
\bas{
\|y\|^2 -2\eps L \|y\|^2+\eps^2 \|y\|^2 =(1-\eps L)^2\|y\|^2\ge  \|(\1+\eps H)y\|^2=\|y\|^2+2\eps \llangle y,  H y \rrangle+\eps^2 \|Hy\|^2.
}
Letting $\eps$ go to zero it follows that $-L\|y\|^2\ge \llangle y,  H y \rrangle$. We choose $R\in(0,\infty)$ such that for all  $x\in \overline{\IB(\theta^*,R)}$ 
\bas{
\|f(x)-f(\theta^*)-H (x-\theta^*)\|\le \sfrac L2\|x-\theta^*\|.
}
Then it follows that
\bas{
\llangle f(x)-f(\theta^*),x-\theta^*\rrangle\le \llangle H(x-\theta^*),x-\theta^*\rrangle+\sfrac L2\|x-\theta^*\|^2\le -\sfrac L2 \|x-\theta^*\|^2
}
and the inequality holds for $c_1:=\frac L2>0$.
\end{proof}

Recall that in \cref{thm-1} we consider an \Adam\ algorithm $(\theta_n)_{n\in\N_0}$ with innovation $(X,U)$ and we use for the generation of $(\theta_n)$ independent copies $(U_n)_{n\in\N}$ of $U$. We interpret the \Adam\ algorithm as a delay equation and record the full history of $X$-values: we let for all $n\in\Z$,
\bas{
X_n=\begin{cases} X(\theta_{n-1},U_n), &\text{ if } n\in\N,\\
0, &\text{ else,}\end{cases} \quad \text{ \ \ and \ \  } \quad  \bX(n)=(X_{n+k})_{k\in-\N_0}.
}
We will use an error bound provided in \cite{DereichAdamconvergence2024} to control the error of the \Adam\ algorithm $(\theta_n)$ in the case where the scheme tends to an attractor $\theta^*$ of the Adam field. More explicitly, we will use the following consequence of  \cite[Theorem 2.5]{DereichAdamconvergence2024}.

\begin{prop}
\label{prop2352}
Assume the setting of \cref{thm-1}. Choose  $R,c_1\in(0,\infty)$ and a norm $\|\cdot\|$ on $\R^d$ induced by a scalar product $\llangle\cdot,\cdot\rrangle$ as in \cref{prop:23553} and assume that  $\overline{\IB(\theta^*,R)}\subset V$. For every $n^*\in\N_0$ let
\bas{\cE_{n^*}:=\{\theta_{n^*}\in \overline {\IB(\theta^*,R)} \}\cap \{\|\bX(n^{*})\|_{\ell_{\varrho}^{d}}\le (\gamma_{n^*})^{-1/2}\}}
and
\bas{T_{n^*}':=\inf\bigl\{n\in\N\cap [n^*,\infty): \|\theta_n-\theta^*\|>R\bigr\}.}
Then there exists $n_0^*\in\N$ such that for every $n^*\ge n_0^*$ there exist $\eta\in(0,\infty)$ so that for all $n\in[n^*,\infty)\cap\N_0$
\bas{
\E[\1_{\cE_{n^*}}\1_{\{T'_{n^*}\ge n\}} \|\theta_n-\theta^*\|^p]^{1/p} \le \eta \sqrt{\gamma_{n+1}}.
}
\end{prop}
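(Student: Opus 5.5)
This proposition is stated as a consequence of \cite[Theorem~2.5]{DereichAdamconvergence2024}, so the plan is to verify the hypotheses of that theorem in the present setting and translate its conclusion. The approach is to restart the \Adam\ algorithm at time $n^*$, in the state $(\theta_{n^*},m_{n^*},v_{n^*})$ or equivalently in the history $\bX(n^*)$, and to apply the local error bound of \cite{DereichAdamconvergence2024} for \Adam\ near a stable zero of the \Adam\ vector field, localized to the ball $\overline{\IB(\theta^*,R)}$.

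\textbf{Step 1: hypotheses on the ball.} By \cref{prop:23553} one has the one-sided dissipativity
$\llangle f(x)-f(\theta^*),x-\theta^*\rrangle\le -c_1\|x-\theta^*\|^2$ on $\overline{\IB(\theta^*,R)}$ with $f(\theta^*)=0$. This is exactly the local contraction hypothesis in \cite[Theorem~2.5]{DereichAdamconvergence2024}. Assumption (a) of \cref{thm-1}, valid on $V\supset\overline{\IB(\theta^*,R)}$, supplies the remaining inputs:
\begin{enumerate}[label=(\roman*)]
\item the $L^p$ moment bound and the $L^p$-Lipschitz bound on $X(\cdot,U)$ with $p>2$;
\item the nondegeneracy $\P(|X^{(i)}|^2\ge\delta)\ge q>1-\beta$, which controls the denominator of $\sigma_n$.
\end{enumerate}
The step sizes are decreasing, and by \cref{le:28732456} they satisfy $\gamma_n\to0$ and $\gamma_{n}/\gamma_{n+1}\to1$. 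This regularity is what the cited theorem requires in order to conclude an $O(\sqrt{\gamma_n})$ rate.

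\textbf{Step 2: localization by stopping.} Up to the exit time $T'_{n^*}$ the iterates stay in the ball. I would therefore either work with the stopped process or modify $X$ outside the ball, so that the cited theorem applies globally. The conclusion only involves $\1_{\{T'_{n^*}\ge n\}}$, and on that event the modified and original processes agree up to time $n$. The initial condition enters through $\cE_{n^*}$: there $\theta_{n^*}$ lies in the ball and the history satisfies $\|\bX(n^*)\|_{\ell^d_\varrho}\le\gamma_{n^*}^{-1/2}$. In the restart, this history bound controls the deviation $|Z_n-g(\bX(n))|$ together with the initial memory terms. Their effect decays geometrically like $\beta^{(n-n^*)/2}$, so the contribution is of size $\gamma_{n^*}\beta^{(n-n^*)/2}\gamma_{n^*}^{-1/2}$. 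This is dominated by $\sqrt{\gamma_{n+1}}$ once $n^*\ge n_0^*$, using that the step sizes vary slowly, that is, $\gamma_{n^*}/\gamma_n\le e^{\,o(n-n^*)}$ as in \cref{le:28732456}.

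\textbf{Step 3: conclusion and main obstacle.} Applying the theorem gives
$\E[\1_{\cE_{n^*}}\1_{\{T'_{n^*}\ge n\}}\|\theta_n-\theta^*\|^p]^{1/p}\le \eta\sqrt{\gamma_{n+1}}$ for all $n\ge n^*$, with $\eta$ depending on $n^*$ through the initial bound. Norm equivalence converts the bound to the scalar-product norm. I expect the main obstacle to be the bookkeeping of Step 2: matching the present notation (the history $\bX(n^*)$ and the restart time) to the formulation in \cite{DereichAdamconvergence2024}. In particular, the bias-correction factor $1-\beta^n$ and the initial $m_{n^*},v_{n^*}$ must be absorbed as exponentially small perturbations. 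Should the cited theorem only give the bound with $\gamma_n$, the replacement by $\gamma_{n+1}$ is immediate since $\gamma_n/\gamma_{n+1}\to1$.
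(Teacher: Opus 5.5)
Your proposal is correct and matches the paper's proof. The paper likewise applies \cite[Theorem~2.5]{DereichAdamconvergence2024} to the algorithm restarted at time $n^*$ on $\cE_{n^*}$: the step-size and moment hypotheses hold by assumption, the initial-condition hypothesis holds by the definition of $\cE_{n^*}$, and the contraction hypothesis comes from \cref{prop:23553} with $\mathfrak R\equiv R$. The paper also takes the target map constantly equal to $\theta^*$ and uses $\cK\vee R\vee 1$ in place of $\cK$, so the decay-of-memory bookkeeping in your Step~2 is absorbed by the cited theorem rather than redone.
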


\begin{proof}
We apply \cite[Theorem 2.5]{DereichAdamconvergence2024}. For this we choose $\alpha,\beta,\epsilon, (\gamma_n),p$ and $V$ as in our main theorem. The norm $\|\cdot\|$, the scalar product $\llangle\cdot,\cdot\rrangle$, $R$ and $c_1$ are chosen as in \cref{prop:23553} and $c_2\in(0,c_1)$ is fixed arbitrarily. We apply Theorem 2.5 of \cite{DereichAdamconvergence2024} with $\cK\vee R\vee 1$ in place of $\cK$ onto the randomly initialised system at time $n^*\ge n_0^*:=\mathfrak n$ (where $\mathfrak n\in\N_0$ is as in Theorem~2.5) on the event $\cE_{n*}$.

We go through the requirements (i) to (iv) of \cite[Theorem~2.5]{DereichAdamconvergence2024} step by step. (i) is satisfied by assumption and (ii) holds on $\cE_{n^*}$ by definition of the latter event. In (iii) we choose $\Psi\equiv \theta^*$ and note that the assumptions are satisfied by choice of the respective $\cK$. (iv) is satisfied as consequence of \cref{prop:23553} for $\mathfrak R\equiv R$.

Consequently, \cite[Theorem~2.5]{DereichAdamconvergence2024} implies that for every $n^*\ge n^*_0$ there exists $\eta\in(0,\infty)$ such that
\bas{
\E[\1_{\cE_{n^*}}\1_{\{T'_{n^*}\ge n\}} \|\theta_n-\theta^*\|^p]^{1/p} \le \eta \sqrt{\gamma_{n+1}}.
}
Indeed, one even has the inequality conditionally on $\cF_{n^*}$. 
\end{proof}

\begin{prop}\label{prop:9873456} Let  $R,c_1\in(0,\infty)$ and  $\|\cdot\|$ a norm on $\R^d$ induced by a scalar product $\llangle\cdot,\cdot\rrangle$ as in \cref{prop:23553}. Additionally suppose that   $\overline{\IB(\theta^*,R)}\subset V$.
Let $n^*\in\N$
and consider the stopping time
\bas{
T_{n^*}:=\inf\bigl\{n\in\N\cap [n^*,\infty): \|\theta_n-\theta^*\|>R\bigr\}\wedge \inf \bigl\{n\in\N : | X(\theta^*,U_n)|\ge n^*(n^*+n)\bigr\}.
} 
and the events
\bas{\cE_{n^*}:=\{\theta_{n^*}\in \overline {\IB(\theta^*,R)} \}\cap \{\|\bX(n^{*})\|_{\ell_{\varrho}^{d}}\le (\gamma_{n^*})^{-1/2}\}\text{ \ and \ }
\Omega_{n^*}=\cE_{n^*}\cap \{ T_{n^*}=\infty\}.} One has
\bas{\lim_{n^*\to\infty}\P\bigl(\bigl\{\lim_{n\to\infty}\theta_n=\theta^*\bigr\}\backslash \Omega_{n^*}\bigr)=0.}
\end{prop}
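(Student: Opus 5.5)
We have to show that, on the convergence event $\{\theta_n\to\theta^*\}$, the event $\Omega_{n^*}$ occurs with probability tending to one as $n^*\to\infty$. We split the complement into three pieces and bound each separately:
\[
\{\theta_n\to\theta^*\}\backslash\Omega_{n^*}\subset \bigl(\{\theta_n\to\theta^*\}\backslash\cE_{n^*}\bigr)\cup\bigl(\{\theta_n\to\theta^*\}\cap\{T'_{n^*}<\infty\}\bigr)\cup B_{n^*},
\]
where $T'_{n^*}$ is the exit time from $\overline{\IB(\theta^*,R)}$ after time $n^*$ and
\[
B_{n^*}=\{\exists n\in\N: |X(\theta^*,U_n)|\ge n^*(n^*+n)\}.
\]
It then suffices to show that each of the three pieces has probability tending to zero.

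\textbf{The event $B_{n^*}$.} This involves only the i.i.d.\ sequence $(X(\theta^*,U_n))_{n}$. By assumption~(a) and Markov's inequality with exponent $p>2$,
\[
\P(B_{n^*})\le \sum_{n\in\N}\cK^p\,(n^*)^{-p}(n^*+n)^{-p}\le \cK^p (n^*)^{-p}\sum_{n}n^{-p}\to0 .
\]
This step is routine.

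\textbf{The exit time $T'_{n^*}$.} On $\{\theta_n\to\theta^*\}$ there is a random index $N$ such that $\|\theta_n-\theta^*\|\le R$ for all $n\ge N$. Hence
\[
\{\theta_n\to\theta^*\}\cap\{T'_{n^*}<\infty\}\subset\{\theta_n\to\theta^*\}\cap\{N>n^*\}.
\]
These sets decrease to a null set as $n^*\to\infty$, so their probability tends to zero by continuity of measure. The same reasoning gives $\P(\{\theta_n\to\theta^*\}\cap\{\theta_{n^*}\notin\overline{\IB(\theta^*,R)}\})\to0$.

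\textbf{The history norm.} The remaining requirement in $\cE_{n^*}$ is $\|\bX(n^*)\|_{\ell_\varrho^d}\le\gamma_{n^*}^{-1/2}$. This is the step needing genuine care, because $\bX(n^*)$ contains $X(\theta_{k-1},U_k)$ for all $k\le n^*$, including early times when $\theta_{k-1}$ may be far from $V$ and where no moment bound is available. I would fix a large $m$ and split $\bX(n^*)$ into the finitely many entries with index $k\le m$ and the entries with $m<k\le n^*$.

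\emph{Early entries ($k\le m$).} Their weights in the norm are $\varrho_{k-n^*}\lesssim\beta^{(n^*-k)/2}$. They therefore contribute at most $\beta^{(n^*-m)/2}\cdot(\text{an a.s.\ finite random variable})$, which tends to zero almost surely as $n^*\to\infty$.

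\emph{Later entries ($m<k\le n^*$).} Work on $\{N\le m\}$, where $\theta_{k-1}\in\overline{\IB(\theta^*,R)}\subset V$ for every $k-1\ge m$. Replace $X(\theta_{k-1},U_k)$ by $X(\theta_{k-1},U_k)\1_{\{\theta_{k-1}\in V\}}$, which agrees with it on this event. Conditioning on $\cF_{k-1}$ and using assumption~(a), each such term has conditional $p$-th moment at most $\cK^p$. Summing against the weights gives
\[
\E\Bigl[\sum_{m<k\le n^*}\varrho_{k-n^*}\,|X(\theta_{k-1},U_k)|\1_{\{\theta_{k-1}\in V\}}\Bigr]\le \|\varrho\|_{\ell_1}\,\cK .
\]
Since $\gamma_{n^*}^{-1/2}\to\infty$, Markov's inequality shows that the probability this part exceeds $\tfrac12\gamma_{n^*}^{-1/2}$ tends to zero.

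\textbf{Combining.} First choose $m$ so that $\P(\{\theta_n\to\theta^*\}\cap\{N>m\})$ is small. Then let $n^*\to\infty$, so that the early-entry part is eventually below $\tfrac12\gamma_{n^*}^{-1/2}$ almost surely and the later-entry part exceeds $\tfrac12\gamma_{n^*}^{-1/2}$ only with vanishing probability. Together with the bounds for $B_{n^*}$ and for the exit time, this shows that $\P(\{\lim_{n\to\infty}\theta_n=\theta^*\}\backslash\Omega_{n^*})\to0$.
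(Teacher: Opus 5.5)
Your proposal is correct and is essentially the paper's own proof. Your fixed index $m$ plays the role of the paper's $n^{**}$, with $\{N\le m\}=\{T'_{m}=\infty\}$. The early part of $\bX(n^*)$ is killed by the geometric weights, the later part by Markov's inequality with the bound $\cK\,\|\varrho\|_{\ell_1}$, and the exit-time part by continuity of measure. The only cosmetic difference is that you bound $B_{n^*}$ via the $p$-th moment and a summable series, where the paper uses second moments and dominated convergence.
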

\begin{proof} 
Consider
\bas{T_{n^*}':=\inf\bigl\{n\in\N\cap [n^*,\infty): \|\theta_n-\theta^*\|>R\bigr\}.}
One has
\bas{
\Omega_0:=\bigl\{\lim_{n\to\infty}\theta_n=\theta^*\bigr\}\subset \bigcup _{n^*\in\N} \{T'_{n^*}=\infty\}
}
so that by monotone convergence
\bas{\label{eq823456}
\lim_{n^*\to\infty} \P\bigl(\Omega_0\backslash \{ T_{n^*}'=\infty\}\bigr)=0.
}
Note that by Jensen's inequality one has
\bas{
\P( \exists \, n \in \N \colon |X(\theta^*,U_n)|\ge n^*(n^*+n))\le \sum_{n=1}^\infty \frac 1{(n^*(n^*+n))^2} \E[|X(\theta^*,U)|^2]
}
and the latter sum has a finite limit and each summand is for each fixed $n$ monotonically decreasing in $n^*$ with limit zero. Consequently, dominated convergence ensures that
\bas{
\lim_{n^*\to\infty} \P(\exists \, n\in\N \colon |X(\theta^*,U_n)|\ge n^*(n^*+n))=0
}
and together with~\cref{eq823456} we conclude that
\bas{\label{eq823456-2}\lim_{n^*\to\infty }\P(\Omega_0\backslash \{T_{n^*}=\infty\})= 0.
}
Now let $n^*,n^{**}\in\N$ with $n^{**}\le n^*$. Note that
\bas{
\|\bX(n^*)\|_{\ell_\varrho^d}\le \sqrt{\beta}^{n^*-n^{**}} \|\bX(n^{**})\|_{\ell^d_\varrho}+ \sum_{k=n^{**}+1}^{n^*} \varrho_{k-n^*} |X(k)|
}
and
\bas{
\E\Bigl[ \sum_{k=n^{**}+1}^{n^*}\1_{\{T'_{n^{**}}\ge k\}} \varrho_{k-n^*} |X(k)|\Bigr]\le \cK\, \|\varrho\|_{\ell_1},
}
with the constant $\cK$ being as in \cref{thm-1}.
Together with the Markov inequality we get that
\bas{
\lim_{n^*\to\infty} \gamma_{n^*}^{1/2}\, \1_{\{T_{n^{**}}'=\infty\}}\|\bX(n^*)\|_{\ell_\varrho^d}=0, \text{ in probability},
}
which entails that
\bas{
\limsup_{n^*\to\infty} \P(\cE_{n^*}^c\cap \Omega_0) \le \P( \Omega_0\cap \{T'_{n^{**}}<\infty\}).
}
Note that the left-hand side does not depend on the choice of $n^{**}$ and by sending $n^{**}$ to infinity we get with~\cref{eq823456} that $\limsup_{n^*\to\infty} \P(\cE_{n^*}^c\cap \Omega_0) =0$. The statement follows when combining this with~\cref{eq823456-2}.
\end{proof}

\section{A martingale CLT}
\label{sec5}

To prove the main theorem we will relate the averaged system $\bar \theta_n-\theta^*$
to a significantly simpler martingale for which classical martingale \CLTs\ can be applied.
We use the concept of stable convergence as can be found in~\cite{HallHeyde80}.
For a short introduction concerning stable convergence on events one may consult \cite[Appendix~A]{DerKas23_CLT}.

We will prove the following.

\begin{prop}
\label{prop:98734456}
Assume the setting of \cref{thm-1} and let $(\bar \Pi[m,n])_{1\le m\le n}$ be as in~\cref{def:bar_Pi}.
We denote for every $n\in\Z$
\bas{
\tilde\bX(n)=(X(\theta^*,U_{n+k}))_{k\in-\N_0} \qquad \text{and} \qquad
\Delta \tilde M_n=g(\tilde \bX(n))+\Psi_{\theta^*}(\tilde \bX(n))-\Psi_{\theta^*}(\tilde \bX(n-1)),
}
where $\Psi$ is as in~\cref{eq:34663}. The matrix $\Gamma=(\Gamma_{i,j})_{i,j=1,\dots,d}\in\R^{d\times d}$ given by $\Gamma_{i,j}=\E[\Delta\tilde M_1^{(i)}\Delta\tilde M_1^{(j)}]$ is well-defined and one has for every $n_0\in\N$ that
\bas{
\sqrt n\,\frac1{n-n_0}\sum_{k=n_0+1}^n \bar\Pi[k,n]\,\Delta \tilde M_k\stackrel{\mathrm{stably}}{\Longrightarrow} \cN(0,H^{-1} \Gamma (H^{-1})^\dagger), \text{ \ as }n\to\infty.
}
\end{prop}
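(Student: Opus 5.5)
Well-definedness of $\Gamma$ is immediate from \cref{prop:3241}: $(\Delta\tilde M_n)_{n\in\Z}$ is a stationary, square-integrable martingale difference sequence with respect to $(\cF_n)_{n\in\Z}$, so $\Gamma=\E[\Delta\tilde M_1(\Delta\tilde M_1)^\dagger]$ is finite. The plan is to reduce the weighted sum to the unweighted one
\[
S_n:=-\frac{1}{\sqrt n}\sum_{k=n_0+1}^n H^{-1}\Delta\tilde M_k ,
\]
and then apply the stable martingale CLT for triangular arrays (\cite{HallHeyde80}). The prefactor $\sqrt n/(n-n_0)$ can be replaced by $1/\sqrt n$, since their ratio tends to $1$.

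\textbf{CLT for $S_n$.} Set $\xi_{n,k}=-n^{-1/2}H^{-1}\Delta\tilde M_k$. The conditional variances are
\[
\sum_{k}\E[\xi_{n,k}\xi_{n,k}^\dagger\mid\cF_{k-1}]=\frac1n\sum_{k=n_0+1}^n H^{-1}\E[\Delta\tilde M_k\Delta\tilde M_k^\dagger\mid\cF_{k-1}](H^{-1})^\dagger .
\]
The $\cF_{k-1}$-conditional covariance is a fixed measurable function of the stationary ergodic shift $(U_{k-1+j})_{j\le0}$ and is integrable. Birkhoff's ergodic theorem therefore gives a.s.\ convergence of this average to $H^{-1}\Gamma(H^{-1})^\dagger$. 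The conditional Lindeberg condition follows from stationarity: $\frac1n\sum_k\E[|\Delta\tilde M_k|^2\1_{\{|\Delta\tilde M_k|>\eps\sqrt n\}}]=\E[|\Delta\tilde M_1|^2\1_{\{|\Delta\tilde M_1|>\eps\sqrt n\}}]\to0$. Because the limiting covariance is deterministic, the nesting condition of \cite{HallHeyde80} is not needed for stability, and we obtain $S_n\Rightarrow\cN(0,H^{-1}\Gamma(H^{-1})^\dagger)$ stably.

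\textbf{Reduction to $S_n$ (main obstacle).} We must show that
\[
R_n:=\frac1{\sqrt n}\sum_{k=n_0+1}^n\bigl(\bar\Pi[k,n]+H^{-1}\bigr)\Delta\tilde M_k\to0\quad\text{in }L^2 .
\]
The weights $\bar\Pi[k,n]$ are deterministic and the increments are orthogonal, so
\[
\E|R_n|^2\le \frac{\E|\Delta\tilde M_1|^2}{n}\sum_{k=n_0+1}^n\bigl\|\bar\Pi[k,n]+H^{-1}\bigr\|^2 .
\]
Take $\ell(n),r(n)$ from \cref{le:28732456}\,\cref{le:28732456_item_iii} and split the sum at these indices.
\begin{itemize}
\item For $k\le \ell(n)$ or $k>r(n)$ there are $o(n)$ indices. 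By \cref{prop:23553} the $\bar\Pi$ are uniformly bounded, so each summand is $O(1)$ and these indices contribute $o(1)$ after division by $n$.
\item For $\ell(n)<k\le r(n)$ we have $k\ge\ell(n)\to\infty$ and $t_n-t_k\ge t_n-t_{r(n)}\to\infty$. \Cref{prop:23553} then gives $\sup_{\ell(n)<k\le r(n)}\|\bar\Pi[k,n]+H^{-1}\|\to0$, so this part is also $o(1)$.
\end{itemize}
Hence $R_n\to0$ in probability.

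\textbf{Conclusion.} Stable convergence is preserved under perturbations that vanish in probability, and under multiplication by the deterministic factor $\sqrt n/(n-n_0)\cdot\sqrt n\to1$. Together these give the claimed stable convergence. The main work is checking the uniform convergence of $\bar\Pi$ on the middle index block; ergodicity of the conditional variance is standard.
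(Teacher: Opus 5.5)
Your proof is correct and rests on the same ingredients as the paper's proof:
\begin{itemize}
\item the split at $\ell(n),r(n)$ from \cref{le:28732456};
\item the uniform boundedness of $\bar\Pi[k,n]$, and its uniform convergence to $-H^{-1}$ on the middle block, from \cref{prop:23553};
\item stationarity for the Lindeberg condition;
\item the ergodic theorem for the conditional covariances.
\end{itemize}

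The difference lies only in where the deterministic weights are handled. The paper applies the multivariate stable martingale CLT directly to the weighted array $Z_k^{(n)}=\frac{\sqrt n}{n-n_0}\bar\Pi[k,n]\Delta\tilde M_k$. It then replaces $\bar\Pi[k,n]$ by $-H^{-1}$ inside the sum of conditional covariances, which is an $L^1$ estimate. You instead first remove the weights with an $L^2$ orthogonality bound on $R_n$, and then invoke the classical CLT for the unweighted stationary ergodic martingale difference sequence. Your route makes the CLT step a textbook one and gives slightly stronger ($L^2$) control of the weight error. The paper's route avoids the extra Slutsky step.

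One justification needs correcting. A deterministic limit covariance lets you drop the nesting condition of \cite{HallHeyde80} only for convergence in distribution, not for \emph{stable} convergence. For example, Brownian increments over the grids $\{i/n\}$ give sums identically equal to $B(1)$. These converge in law to $\cN(0,1)$ but not stably to a normal limit independent of $\cF$. In your setting this is harmless: your array is adapted to $\cF_{n,k}=\cF_k$, which does not depend on $n$, so the nesting condition holds trivially and the stable conclusion is justified.
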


\begin{proof}[Proof of \cref{prop:98734456}]
Let $(\cF_k)_{k\in\Z}$ be the filtration generated by the process $(U_k)_{k\in\Z}$.
By \cref{prop:3241}, $(\Delta \tilde M_k)_{k\in\Z}$ is a $L^2$-integrable, stationary process of martingale differences. In particular, $\Gamma$ as in the proposition is well-defined.
We will apply a classical \CLT\ to prove the proposition.

By \cite[Theorem~A.5]{DerKas23_CLT} (see also \cite[Corollary~3.1]{HallHeyde80} for a one-dimensional version of the result),
the statement of the proposition follows once we verified the following:
\begin{enumerate}[label=(\roman*)]
\item
\label{prop:98734456_item_i}
for all $\varepsilon\in(0,\infty)$: $\lim_{n\to\infty}\sum_{k=n_0+1}^n \E\bigl[ \1\{|Z_k^{(n)}|>\varepsilon\} |Z_k^{(n)}|^2\big|\cF_{k-1}\bigr]=0$, in probability, and
\item
\label{prop:98734456_item_ii}
$\lim_{n\to\infty} \sum_{k=n_0+1}^n\cov(Z_k^{(n)}|\cF_{k-1})=\tilde \Gamma$, in probability,
\end{enumerate}
where
\bas{
Z_k^{(n)}= \frac{\sqrt n}{n-n_0} \bar \Pi[k,n] \Delta \tilde M_k\text{ \ and \ } \tilde \Gamma=H^{-1} \Gamma (H^{-1})^\dagger.
}
By \cref{prop:23553},  $C=\sup_{1\le k\le n}|\bar \Pi[k,n]|$ is finite and
for $n\ge 2n_0$, we have $\sqrt n/(n-n_0)\le 2n^{-1/2}$. Consequently,
\bas{
\1\{|Z_k^{(n)}|>\varepsilon\} |Z_k^{(n)}|^2\le
\1\{2C n^{-1/2} |\Delta \tilde M_k|>\eps\} C^2 \frac{2}{n-n_0} |\Delta \tilde M_k|^2
}
so that
\bas{
\E\Bigl[\sum_{k=n_0+1}^n &\E\bigl[ \1\{|Z_k^{(n)}|>\varepsilon\} |Z_k^{(n)}|^2\big|\cF_{k-1}\bigr]\Bigr]=\sum_{k=n_0+1}^n \E\bigl[ \1\{|Z_k^{(n)}|>\varepsilon\} |Z_k^{(n)}|^2\bigr]\\
&\le  \frac{2C^2}{n-n_0}\sum_{k=n_0+1}^n \E\bigl[ \1\{|\Delta \tilde M_k|>\sfrac1{2C}\eps n^{1/2}\} |\Delta \tilde M_k|^2\bigr]\\
&= 2C^2 \,\E\bigl[ \1\{|\Delta \tilde M_0|>\sfrac1{2C}\eps n^{1/2}\} |\Delta \tilde M_0|^2\bigr],
}
where we used stationarity of $(\Delta\tilde M_k)$ in the latter step. By $L^2$-integrability of $\Delta \tilde M_0$, we get that 
\bas{
\lim_{n\to\infty}\E\Bigl[\sum_{k=n_0+1}^n &\E\bigl[ \1\{|Z_k^{(n)}|>\varepsilon\} |Z_k^{(n)}|^2\big|\cF_{k-1}\bigr]\Bigr]
=0
}
so that property \cref{prop:98734456_item_i}
follows as consequence of the Markov inequality.

Next, we prove \cref{prop:98734456_item_ii}. First note that
\bas{
\sum_{k=n_0+1}^n\cov(Z_k^{(n)}|\cF_{k-1})&=\sum_{k=n_0+1}^n \frac{n}{(n-n_0)^2}  \,\cov(\bar\Pi[k,n] \,\Delta\tilde M_k|\cF_{k-1}) \\
&=\frac{n}{(n-n_0)^2} \sum_{k=n_0+1}^n \bar\Pi[k,n] \,\cov(\Delta \tilde M_k|\cF_{k-1}) \,\bar\Pi[k,n]^\dagger.
}
By \cref{le:28732456}, we can choose $\N$-valued sequences $(\ell(n))_{n\ge n_0+1}$ and $(r(n))_{n\ge n_0+1}$ satisfying for all $n\ge n_0+1$ that $n_0<\ell(n)\le r(n)\le n$ and
\bas{
\lim_{n\to\infty} \ell(n)=\infty, \ \lim_{n\to\infty} \frac{\ell(n)+n-r(n)}{n}=0 \text{ \ and \ } \lim_{n\to\infty} t_n-t_{r(n)}=\infty.
}
Note that
\bas{\label{eq982346}\frac 1n\E\Bigl[\Bigl| \sum_{k=n_0+1}^{\ell(n)} \bar\Pi[k,n] \,\cov(\Delta \tilde M_k|\cF_{k-1}) \,\bar\Pi[k,n]^\dagger\Bigr|\Bigr]&\le \frac{C^2}n \sum_{k=n_0+1}^{\ell(n)} \E \bigl[|\cov(\Delta \tilde M_k|\cF_{k-1})| \bigr]\\
&=\frac{C^2(\ell(n)-n_0)}n\E \bigl[|\cov(\Delta\tilde M_1|\cF_{0})| \bigr],
}
where we used stationarity of $(\Delta \tilde M_k)$ in the latter step. Since $\Delta \tilde M_1$ has finite second moments and $\ell(n)/n\to0$ as $n\to\infty$ we have that the left-hand side of~\cref{eq982346} tends to zero.
 In complete analogy one gets that 
\bas{\label{eq93562}\lim_{n\to\infty} \frac 1n\E\Bigl[\Bigl| \sum_{k=r(n)+1}^{n} \bar\Pi[k,n] \,\cov(\Delta \tilde M_k|\cF_{k-1}) \,\bar\Pi[k,n]^\dagger\Big|\Bigr]=0.
}
By \cref{prop:23553}, $\varepsilon(n):=\sup _{k\in\{\ell(n)+1,\dots ,r(n)\}} |\bar \Pi[k,n]+H^{-1}|$ tends to zero when letting $n\to\infty$. Thus
\bas{\label{eq87352}
\E\Bigl[&\Bigl|\frac1n \sum_{k=\ell(n)+1}^{r(n)} \bar\Pi[k,n] \,\cov(\Delta \tilde M_k|\cF_{k-1}) \,\bar\Pi[k,n]^\dagger\\
&\qquad -\frac1n \sum_{k=\ell(n)+1}^{r(n)} (-H)^{-1} \,\cov(\Delta \tilde M_k|\cF_{k-1}) \,(-H^{-1})^\dagger\Bigr|\\
&\le \frac1n \sum_{k=\ell(n)+1}^{r(n)} \bigl| \bar\Pi[k,n]+H^{-1}\bigr| \,\E\bigl[\bigl|\cov(\Delta \tilde M_k|\cF_{k-1})\bigr|\bigr] \,\bigl|(H^{-1})^\dagger\bigr|\\
&\qquad + \frac1n \sum_{k=\ell(n)+1}^{r(n)} \bigl| \bar\Pi[k,n]\bigr| \,\E\bigl[\bigl|\cov(\Delta \tilde M_k|\cF_{k-1})\bigr|\bigr] \,\bigl|(\Pi[k,n]+H^{-1})^\dagger\bigr|\\
&\le 2 C \eps(n) \E\bigl[\bigl|\cov(\Delta \tilde M_1|\cF_{0})\bigr|\bigr] 
}
tends to zero as $n\to\infty$. Combining estimates~\cref{eq982346}, \cref{eq93562} and \cref{eq87352} with the Markov inequality we conclude that
\bas{\label{eq823453} \lim_{n\to\infty} \Bigl| \sum_{k=n_0+1}^n\cov(Z_k^{(n)}|\cF_{k-1})
- \frac n{(n-n_0)^2}  \sum_{k=\ell(n)+1}^{r(n)} H^{-1} \cov(\Delta \tilde M_k|\cF_{k-1}) (H^{-1})^\dagger \Bigr| =0, \text{ in probability.}
}
Now using the martingale property of $(\Delta \tilde M_k)$ we conclude that for every $k\in\N$
\bas{
\E[\cov(\Delta \tilde M_k|\cF_{k-1})]=\bigl(\E\bigl[\E[\Delta \tilde M_k^{(i)}\Delta \tilde M^{(j)}_k|\cF_{k-1}]\bigr]\bigr)_{i,j=1,\dots,d}= \bigl(\E[\Delta \tilde M_k^{(i)}\Delta \tilde M^{(j)}_k]\bigr)_{i,j=1,\dots,d}=\Gamma
}
so that as consequence of the ergodic theorem
\bas{\lim_{n\to\infty} \frac 1{r(n)-\ell(n)}\sum_{k=\ell(n)+1}^{r(n)} \cov(\Delta \tilde M_k|\cF_{k-1})=\Gamma, \text{ in probability.}
} 
(Actually the convergence also occurs almost surely.) Together with~\cref{eq823453}
and the fact that
\begin{equation}
  \lim_{ n \to \infty } \frac{ n }{ (n - n_0)^2 } ( r(n) - \ell(n) ) = 1
\end{equation}
we verified property \cref{prop:98734456_item_ii}. This finishes the proof.
\end{proof}

\section{Proof of the main theorem}
\label{sec:proof_main_result}

In this section, we prove \cref{thm-1}.
The strategy of the proof is as follows. We view the averaged \Adam\ iterates as perturbed linear system.
On the event $\Omega_0:=\{\lim_{n\to\infty}\theta_n=\theta^*\}$, the martingale analysed in \cref{sec5}   provides the leading contribution, while all remaining terms are shown to be of smaller order. For the understanding of the proof, it is crucial to under stand the
 first step,  where  we write the averaged system in a linearised form. 
Without loss of generality, we assume that $\theta^*=0$. For a sketch of the proof please consult \cref{sec2_1}.

\emph{1. Viewing the Adam iterates as perturbed linear system.}
For every $n\in\N$ we can write 
\bas{\label{eq:intro}
\theta_{n}^{(i)}&=\theta_{n-1}^{(i)}+\gamma_{n} \sigma_{n}^{(i)} m_{n}^{(i)} \\
&=\theta_{n-1}^{(i)}+\gamma_{n}\bigl((H\theta_{n-1})^{(i)}+\underbrace{f^{(i)}(\theta_{n-1})-(H\theta_{n-1})^{(i)}}_{=:R_{n}^{(i)}}+\underbrace{g^{(i)}(\bX(n)) -f^{(i)}(\theta_{n-1})}_{=:\Upsilon^{(i)}_{n}}\\
&\qquad\qquad\qquad\qquad +\underbrace{ \sigma_{n}^{(i)} m_{n}^{(i)}-g^{(i)}(\bX(n))}_{=:\bar R_{n}^{(i)}}\bigr).
}
We denote for $m,n\in\N_{0}$ with $m\le n$,  $\Pi[m,n]=\prod_{k=m+1}^{n}(\1+\gamma_{k}H)$. Then one has on the event 
\bas{
\theta_{n}=\Pi[n_{0},n] \theta_{n_{0}} +\sum_{k=n_{0}+1}^{n} \gamma_{k}\,\Pi[k,n](\Upsilon_{k}+R_{k}+\bar R_{k}) 
}
and in terms of
\bas{
\bar\Pi[k,n]= \gamma_{k}\sum_{\ell=k}^{n}\Pi[k,\ell]\qquad (1\le k\le n)
}
that
\bas{\label{eq28347}
\bar\theta_{n}&=\frac 1{n-n_{0}}\sum_{\ell=n_{0}+1}^{n} \theta_{\ell}=\frac 1{n-n_{0}}\sum_{\ell=n_{0}+1}^{n}\Bigl( \Pi[n_{0},\ell] \,\theta_{n_{0}} +\sum_{k=n_{0}+1}^{\ell} \gamma_{k}\Pi[k,\ell](\Upsilon_{k}+R_{k}+\bar R_{k}) \Bigr)\\
&=\frac 1{n-n_{0}}\Bigl(\sum_{\ell=n_{0}+1}^{n} \Pi[n_{0},\ell] \,\theta_{n_{0}} +\sum_{k=n_{0}+1}^{n}\gamma_{k}\sum_{\ell=k}^{n}\Pi[k,\ell](\Upsilon_{k}+R_{k}+\bar R_{k}) \Bigr)\\
&=\frac 1{n-n_{0}}\Bigl(\bar\Pi[n_{0}+1,n] \,\underbrace{\gamma_{n_0+1}^{-1}\Pi[n_0,n_0+1]\theta_{n_{0}}}_{=:\tilde \theta_{n_0}} +\sum_{k=n_{0}+1}^{n}\bar\Pi[k,n](\Upsilon_{k}+R_{k}+\bar R_{k}) \Bigr).
}
We use the martingale correction to represent $\Upsilon_{k}$: Let for $k=n_0+1,\dots$
\bas{
\Delta M_k&=1_{\{\theta_{k-1}\in V\}}\bigl( \Upsilon_k+\Psi_{\theta_{k-1}}(\bX(k))-\Psi_{\theta_{k-1}}(\bX(k-1))\bigr)\\
&=1_{\{\theta_{k-1}\in V\}}\bigl(g(\bX(k))-f(\theta_{k-1}) +\Psi_{\theta_{k-1}}(\bX(k))-\Psi_{\theta_{k-1}}(\bX(k-1))\bigr)
}
and recall that by \cref{prop:3241} it is a sequence of  martingale differences.
We will compare it with the martingale differences \bas{\Delta \tilde M_k=g(\tilde \bX(k))+\Psi_{\theta^*}(\tilde \bX(k))-\Psi_{\theta^*}(\tilde \bX(k-1)),
}
where $\tilde \bX(k)=(\tilde X_{k+\ell})_{\ell\in-\N_0}=(X(U_{k+\ell},\theta^*))_{\ell\in-\N_0}$. 
Letting 
$\tilde R_{k}:=\Upsilon_k-\Delta M_k$ 
and $R_k':=\Delta M_k-\Delta\tilde  M_k$ we get with \cref{eq28347} that
\bas{\label{eq972356}
\bar\theta_{n}&=\frac 1{n-n_{0}}\Bigl(\bar\Pi[n_{0}+1,n] \,\tilde \theta_{n_{0}} +\sum_{k=n_{0}+1}^{n} \bar\Pi[k,n](\Delta\tilde M_{k}+R_{k}+\bar R_{k}+\tilde R_{k}+R_k') \Bigr).
}
%
%

We will show that the contribution of the quantities $R_{k}$, $\bar R_{k}$, $\tilde R_{k}$ and $R'_k$ is   asymptotically negligible (meaning that their overall contribution is of order  $o(1/\sqrt n)$, in probability, on $\Omega_0=\{\lim_{n\to\infty}\theta_n=\theta^*\}$).
Indeed, once we verified that 
\bas{\label{eq:923567}
\lim_{n\to\infty} \1_{\{\lim_{n\to\infty}\theta_n=\theta^*\}}\sqrt n  \frac 1{n-n_{0}}\sum_{k=n_{0}+1}^{n} \bar\Pi[k,n](R_{k}+\bar R_{k}+\tilde R_{k}+R_k') =0,\text{ in probability,}
}
we can combine this with \cref{prop:98734456} and a Lemma of Slutsky for stable convergence to get with \cref{eq972356} that
 \bas{
 \sqrt n \, \bar\theta_{n} \stackrel {\mathrm{stably}}\Longrightarrow \cN(0, H^{-1}\Gamma(H^{-1})^\dagger), \text{ \ \ on }\{\lim_{n\to\infty}\theta_n=\theta^*\},
 }
 where $H$ and $\Gamma$ are as in the theorem. This then finishes the proof.

 \emph{2. Localisation.}
We now fix $R,c_1\in(0,\infty)$ and a norm $\|\cdot\|$  on $\R^d$ induced by a scalar product $\llangle\cdot,\cdot\rrangle$ as in \cref{prop:23553} and \cref{prop:9873456}. For $n^*\in\N$ we consider the stopping time
\bas{
T_{n^*}:=\inf\bigl\{n\in\N\cap [n^*,\infty): \|\theta_n-\theta^*\|>R\bigr\}\wedge \inf \bigl\{n\in\N : | X(U_n,\theta^*)|\ge n^*(n^*+n)\bigr\}.
} 
and the events
\bas{\cE_{n^*}:=\{\theta_{n^*}\in \overline {\IB(\theta^*,R)} \}\cap \{\|\bX(n^{*})\|_{\ell_{\varrho}^{d}}\le (\gamma_{n^*})^{-1/2}\}\text{ \ and \ }
\Omega_{n^*}=\cE_{n^*}\cap \{ T_{n^*}=\infty\}.} By \cref{prop:9873456}, one has
\bas{\lim_{n^*\to\infty}\P(\{\lim_{n\to\infty}\theta_n=\theta^*\}\backslash \Omega_{n^*})=0.}
It thus suffices to prove \cref{eq:923567} for all sufficiently large $n^*\in\N$ with $\1_{\{\lim_{n\to\infty}\theta_n=\theta^*\}}$ replaced by $\1_{\Omega_{n^*}}$. In the following considerations $n^*\in\N$ is an arbitrarily fixed large index.

In order to show that the contributions of the terms $R_k$, $\bar R_k$, $\tilde R_k$, and $R'_k$ are asymptotically negligible, we perform a series of estimations. 
To simplify the presentation, we use the following convention: let $(a_n)$ and $(b_n)$ be real-valued expressions defined on an index set $I$. 
We write
\bas{
a_n \les b_n, \text{ \ for } n \in I
}
if and only if there exists a $\kappa \in (0,\infty)$ such that $a_n \le \kappa b_n$ for all $n \in I$. 
We use the same notation for families of random variables $(A_n)$ and $(B_n)$ and write
\bas{
A_n\prec B_n , \text{ \ for } n\in I
}
if and only if there exists a (deterministic) $\kappa\in(0,\infty)$ such that for all $n\in I$ one has $A_n\le \kappa B_n$.

\emph{3. Negligibility of $R_{k}$-term.}
By \cref{prop2352}, there exists $n_0^*\in\N$ and $\eta\in(0,\infty)$ such that for all $n^*\ge n_0^*$ one has
\bas{\label{eq:98734256}
\E[\1_{\cE_{n^*}}\1_{\{T_{n^*}\ge n\}} \|\theta_n-\theta^*\|^p]^{1/p} \le \eta \sqrt{\gamma_{n+1}}.
}
Fix $n^*\ge n_0^*$ arbitrarily.
As consequence of assumption~\cref{eq:237462} and the definition of $T_{n^*}$, one has
\bas{\1_{\{T_{n^*}\ge k\}} |R_k|=\1_{\{T_{n^*}\ge k\}} |f(\theta_{k-1})-H (\theta_{k-1}-\theta^*)|\les |\theta_{k-1}-\theta^*|^{1+\lambda}, \text{ \ for }k=n^*+1,\dots. 
}
 We combine this with \cref{eq:98734256} and assumption \cref{eq:237856-3} and conclude that
\bas{
\E\Bigl[\1_{\cE_{n^*}}\1_{\{T_{n^{*}}\ge n\}}\sum_{k=n^{*}+1}^{n}  |R_{k}|\Bigr]&\les \E\Bigl[\sum_{k=n^{*}+1}^{n} \1_{\cE_{n^*}}\1_{\{T_{n^{*}}\ge k\}} |\theta_{k-1}-\theta^*|^{1+\lambda}\Bigr]\\
&\les \sum_{k=n^{*}+1}^{n}  \sqrt{\gamma_{k}}^{1+\lambda} =o(\sqrt n), \text{ \ for }n\in\{n^*,n^*+1,\dots\}.
}
As consequence of the Markov inequality we have, on $\Omega_{n^{*}}=\cE_{n^*}\cap\{T_{n^{*}}=\infty\}$, that
\bas{
\lim_{n\to\infty}\sqrt n\,\frac 1{n-n_{0}} \sum_{k=n_{0}+1}^{n} |R_{k}| =0, \text{ in probability.}
}
By the uniform boundedness of the matrices $(\bar \Pi_{m,n})_{1\le m\le n}$ and the triangle inequality it follows that 
\bas{
\lim_{n\to\infty}\sqrt n\, \Bigl|\frac1{n-n_{0}} \sum_{k=n_{0}+1}^{n} \bar \Pi[k,n] R_{k}\Bigr| =0, \text{ in probability, on $\Omega_{n^*}$.}
}

\emph{4. Negligibility of $\bar R_{k}$-term.} Let $n\in\N$ with $n\ge n_0$ and $i\in\{1,\dots,d\}$.
Note that for $\tilde\sigma_n^{(i)}:=\frac 1{\sqrt {v_{n}^{(i)}}+\epsilon}$, one has
\bas{
\sqrt{1-\beta^{n}}\, \tilde\sigma_{n}^{(i)}\le \sigma_{n}^{(i)}=\frac 1{\sqrt{v_{n}^{(i)}/(1-\beta^{n})}+\epsilon}\le \tilde \sigma_{n}^{(i)}.
}
Consequently, using that $\tilde \sigma_{n}^{(i)}m_{n}^{(i)}=g^{(i)}(\bX(n))$, the uniform boundedness of $g$ and that for $y\in[0,1]$, $\sqrt y\ge y$, we get that
\bas{
|\bar R_n^{(i)}|&=|\sigma_n^{(i)} m_n^{(i)} -\tilde \sigma_{n}^{(i)}m_{n}^{(i)}|\le (1-\sqrt{1-\beta^{n}}) |\tilde \sigma_{n}^{(i)}m_{n}^{(i)}|\\
&=  (1-\sqrt{1-\beta^{n}})  \,|g(\bX^{(i)}(n))| \le \frac{1-\alpha}{\sqrt{1-\beta}\sqrt{1-\alpha^2/\beta}} \beta^n.
}
Together with the uniform boundedness of the matrices $(\bar\Pi[m,n])_{1\le m\le n}$ we conclude that
\bas{
\lim_{n\to\infty}\sqrt n\, \Bigl|\frac1{n-n_{0}} \sum_{k=n_{0}+1}^{n} \bar \Pi[k,n] \bar R_{k}\Bigr| =0.
}

\emph{5. Negligibility of $\tilde R_{k}$-term.} Again we fix $n^*\ge n_0$. Note that for all $k>n^*$ we have on $\{T_{n^*}\ge k\}$ that 
\bas{
\tilde R_k=\Upsilon_k-\Delta M_k=\Psi_{\theta_{k-1}}(\bX(k-1))- \Psi_{\theta_{k-1}}(\bX(k)).
}
Consequently, on $\Omega_{n^*}$, we have for all $n=n^*+1,\dots$ that
\bas{\label{eq87234}
\sum_{k=n^*+1}^n \bar\Pi[k,n]\, \tilde R_k&=\sum_{k=n^*}^{n-1} \bar\Pi[k+1,n]\, \Psi_{\theta_k}(\bX(k))-\sum_{k=n^*+1}^{n} \bar\Pi[k,n]\, \Psi_{\theta_{k-1}}(\bX(k))\\
&=\sum_{k=n^*+1}^{n-1} \bigl(\bar\Pi[k+1,n]\, \Psi_{\theta_k}(\bX(k))-\bar\Pi[k,n]\, \Psi_{\theta_{k-1}}(\bX(k))\bigr)\\
&\qquad +\bar\Pi[n^*+1,n]\, \Psi_{\theta_{n^*}}(\bX(n^*))-\bar\Pi[n,n]\, \Psi_{\theta_{n-1}}(\bX(n)).
}
By definition of $\cE_{n^*}$ and $T_{n^*}$ we get that 
\bas{\label{eq8532}
\E\bigl[\1_{\cE_{n^*}}&\1_{\{T_{n^*}\ge k\}} \|\bX(k)\|^2_{\ell_\varrho^d}\bigr]^{1/2}\le  (\sqrt{\beta})^{k-n^*} \E[\1_{\cE_{n^*}} \|\bX(n^*)\|_{\ell_\varrho^d}^2]^{1/2}+ \sum_{r=n^*+1}^k \varrho_{r-k} \,\E[\1_{\{T_{n^*}\ge r\}} |X(\theta_{r-1},U_r)|^2]^{1/2}\\
&\le \gamma_{n^*}^{-1/2} (\sqrt\beta)^{k-n^*}+ \|\varrho\|_{\ell_1}\,\cK \les 1, \text{ \ for }k\in\{n^*,n^*+1,\dots\}. 
}
In combination with the uniform boundedness of the $\bar\Pi$-matrices and estimate \cref{eq:8761245} we get that
\bas{
\E[\1_{\cE_{n^*}}\1_{\{T_{n^*}\ge n\}}  |\bar\Pi[n,n] \Psi_{\theta_{n-1}}(\bX(n))|]\les \E\bigl[
\1_{\cE_{n^*}}\1_{\{T_{n^*}\ge n\}} \bigl(\|\bX(n)\|_{\ell_{\varrho}^{d}}+ \E[|X(\theta,U)|]\big|_{\theta=\theta_{n-1}}\bigr)\bigr]\les 1,
}
for $n=n^*+1,\dots$. 
Using the  boundedness of the $\bar\Pi$-operators we get that altogether, on $\Omega_{n^*}$, in probability
\bas{\label{eq8735632}
\lim_{n\to\infty}\sqrt n \,\Bigl| \frac 1{n-n_0} \Bigl(&\sum_{k=n_0+1}^{n^*} \bar\Pi[k,n]\, \tilde R_k + \bar\Pi[n^*+1,n]\, \Psi_{\theta_{n^*}}(\bX(n^*)-\bar \Pi[n,n]\,\Psi_{\theta_{n-1}}(\bX(n))\Bigr)\Bigr|=0.
}
It remains to analyse the contribution of the summands with index $k\in\{n^*+1,\dots,n\}$ on the right-hand side of \cref{eq87234}.
Note that on $\Omega_{n^*}$ one has for  all $k=n^*+1,\dots,n-1$ that
\bas{\label{eq765214}
\bigl |\bar\Pi[k+1,n]\, \Psi_{\theta_k}(\bX(k))-\bar\Pi[k,n]\, \Psi_{\theta_{k-1}}(\bX(k))\bigr | &\le \bigl |\bar\Pi[k+1,n]-\bar\Pi[k,n] \bigr| \,|\Psi_{\theta_k}(\bX(k))|\\
&\qquad + \bigl|\bar\Pi[k,n] \bigr| \,|\Psi_{\theta_k}(\bX(k))-\Psi_{\theta_{k-1}}(\bX(k))|
}
and that
\bas{\label{eq874356}
\bar\Pi[k+1,n]-\bar\Pi[k,n]&=\gamma_{k+1} \sum_{\ell=k+1}^n\Pi[k+1,\ell]-\gamma_k  \sum_{\ell=k}^n\Pi[k,\ell]\\
&=-\gamma_k \II+\sum_{\ell=k+1}^n (\gamma_{k+1}\II-\gamma_k \II-\gamma_k\gamma_{k+1} H) \,\Pi[k+1,\ell]\\
&=-\gamma_k \II+ \Bigl(\Bigl(1-\frac{\gamma_k}{\gamma_{k+1}}\Bigr) \II - \gamma_k H\Bigr) \,\bar \Pi[k+1,n].
}
By assumption~\cref{eq:237856-1}, we have that  $\lim_{k\to\infty} (1-\frac{\gamma_{k+1}}{\gamma_k})/\gamma_k=0$ so that, in particular, $\lim_{k\to\infty}\gamma_{k+1}/\gamma_k=1$. This entails that the expression
\bas{
\gamma_k^{-1} \Bigl(1-\frac{\gamma_k}{\gamma_{k+1}}\Bigr)=\frac {\gamma_{k+1}-\gamma_k}{\gamma_k^2}\frac{\gamma_k}{\gamma_{k+1}}
}
tends to zero and is, in particular,  uniformly bounded over all $k\in\N$. Together with~\cref{eq874356} and the uniform boundedness of the matrices $(\bar\Pi[m,n])_{1\le m\le n}$ we conclude that
\bas{\label{eq765214-2}
\bigl |\bar\Pi[k+1,n]-\bar\Pi[k,n] \bigr|
\les \gamma_k\text{, \ for  }(k,n)\in\{(k',n') \in\N^2: k'\le n'\}.}
Combining this estimate with the uniform boundedness of $(\bar\Pi[m,n])_{1\le m\le n}$ and~\cref{eq765214} we get that, on $\Omega_{n^*}$,
\bas{\label{eq:862461}
\bigl |\bar\Pi[k+1,n]\, \Psi_{\theta_k}(\bX(k))-\bar\Pi[k,n]\, \Psi_{\theta_{k-1}}(\bX(k))\bigr | \les  \gamma_k \,|\Psi_{\theta_k}(\bX(k))|+ \,|\Psi_{\theta_k}(\bX(k))-\Psi_{\theta_{k-1}}(\bX(k))|,
}
for $(k,n)\in\{(k',n') \in\N^2: k'\le n'\}$. On $\{T_{n^*}> k\}$, the state $\theta_k$ is in $\overline{\IB(\theta^*,R)}\subset V$ and we conclude with \cref{le:89734556} and the uniform boundedness of the moments of the innovation on $\overline{\IB(\theta^*,R)}$ that, on $\Omega_{n^*}$,
\bas{\label{eq:783456}
|\Psi_{\theta_k}(\bX(k))|\les  \|\bX(k)\|_{\ell_{\varrho}^d}+1, \text{ \ for }k\in\{n^*,\dots\}.
}
Moreover, by \cref{le:3451}, one has that, on $\Omega_{n^*}$,
\bas{
|&\Psi_{\theta_k}(\bX(k))-\Psi_{\theta_{k-1}}(\bX(k))|\\
&\les \sum_{i=1}^d \bigl(1+(\delta^{-1/2}+v^*(\bX^{(i)}(k))^{-1/2})(\|\bX^{(i)}(k)\|_{\ell_{\varrho}}
+ \E[|X^{(i)}(\theta,U)|^{2}]\big|_{\theta=\theta_{k-1}}^{1/2}) \bigr) \\
& \qquad \qquad \bigl(\E[|X(\theta,U)-X(\theta',U)|^{2}]\big|_{(\theta,\theta')=(\theta_k,\theta_{k-1})}\bigr)^{1/2}\\
&\les \sum_{i=1}^d  (v^*(\bX^{(i)}(k))^{-1/2}+1) (\|\bX(k)\|_{\ell_{\varrho}^d} +1)|\theta_k-\theta_{k-1}|,\text{ \ for }k\in\{n^*+1,\dots\}.
}
Using that $\theta_n^{(i)}-\theta^{(i)}_{n-1}=\gamma_n \sigma_n^{(i)}m_n^{(i)}$ and $|\sigma_n^{(i)}m_n^{(i)}|\le |g^{(i)}(\bX(n))|\le \frac{1-\alpha}{\sqrt{1-\beta}\sqrt{1-\alpha^2/\beta}} $ we conclude that, on $\Omega_{n^*}$,
\bas{\label{eq873563}
|\Psi_{\theta_k}(\bX(k))-\Psi_{\theta_{k-1}}(\bX(k))|\les \sum_{i=1}^d  (v^*(\bX^{(i)}(k))^{-1/2}+1) (\|\bX(k)\|_{\ell_{\varrho}^d} +1) \,  \gamma_k,
}
for $k\in\{n^*+1,\dots\}$.
 Together with \cref{eq:862461} and \cref{eq:783456} we get that, on $\Omega_{n^*}$,
\bas{\label{eq:82344566}
\bigl |\bar\Pi[k+1,n]\, \Psi_{\theta_k}(\bX(k))-\bar\Pi[k,n]\, \Psi_{\theta_{k-1}}(\bX(k))\bigr | \les  \sum_{i=1}^d  (v^*(\bX^{(i)}(k))^{-1/2}+1) (\|\bX(k)\|_{\ell_{\varrho}^d} +1) \,  \gamma_k,
}
for $(k,n)\in\{(k',n') \in\N^2: n^*<k'< n'\}$. We are now in the position to apply \cref{prop:7356}. We consider the events $(\cE'_k)_{k\in\N}$ as defined in \cref{eq:367122346-2} and note that all but finitely many of the events occur on $\Omega_{n^*}$.
Now using \cref{eq:82344566} and the Cauchy-Schwarz inequality in the first step and \cref{eq:367122346-2} and \cref{eq8532} in the second step we get that for $n\in\{n^*+1,\dots\}$
\bas{
\E\Bigl[&\sum_{k=n^*+1}^{n-1} \1_{\Omega_{n^*}}\1_{\cE'_k}  \bigl |\bar\Pi[k+1,n]\, \Psi_{\theta_k}(\bX(k))-\bar\Pi[k,n]\, \Psi_{\theta_{k-1}}(\bX(k))\bigr |\Bigr]\\
&\les \sum_{k=n^*+1}^{n-1}  \gamma_k\, \E\Bigl[\1_{\Omega_{n^*}}\1_{\cE'_k} \sum_{i=1}^d  v^*(\bX^{(i)}(k))^{-1}+1\Bigr]^{1/2}  \E\Bigl [ \1_{\Omega_{n^*}}\bigl(\|\bX(k)\|_{\ell_{\varrho}^d} +1\bigr)^2\Bigr]^{1/2} \\&\les \sum_{k=n^*+1}^{n-1}  \gamma_k.
}
As consequence of \cref{eq:237856-3} we have that $\lim_{n\to\infty}\frac1{\sqrt n}\sum_{k=n^*+1}^{n-1}\gamma_k=0$ so that  by the Markov inequality, in probability, on $\Omega_{n^*}$,
\bas{
 \lim_{n\to\infty} \sqrt n  \frac 1{n-n_0}\sum_{k=n^*+1}^{n-1}\bigl |\bar\Pi[k+1,n]\, \Psi_{\theta_k}(\bX(k))-\bar\Pi[k,n]\, \Psi_{\theta_{k-1}}(\bX(k))\bigr|=0.
}
Together with \cref{eq8735632} we thus showed negligibility of the $\tilde R_k$-term.

\emph{6. Neglibility of $R_k'$-term.}
It remains to show that, in probability, on $\Omega_{n^*}$,
\bas{
\lim_{n\to\infty}\sqrt n\,\frac1{n-n_0}\sum_{k=n_0+1}^n \bar\Pi[k,n]\,R_k'=0,
}
where $R_k'=\Delta M_k-\Delta\tilde M_k$,
\bas{
\Delta M_k=\1_{\{\theta_{k-1}\in V\}}\bigl(\Psi_{\theta_{k-1}}(\bX(k))-\Psi_{\theta_{k-1}}(\bX(k-1))+g(\bX(k))-f(\theta_{k-1})\bigr)
}
and
\bas{
\Delta \tilde M_k=\Psi_{\theta^*}(\tilde \bX(k))-\Psi_{\theta^*}(\tilde \bX(k-1))+g(\tilde \bX(k)).
}
Recall that by \cref{eq8532}, $\E\bigl[\1_{\cE_{n^*}}\1_{\{T_{n^*}\ge k\}} \|\bX(k)\|_{\ell_\varrho^d}\bigr]$ is finite and that $f(\theta)$ and $\E[|X(U,\theta)|]$ are uniformly bounded over all $\theta\in\overline{\IB(\theta^*,R)}$.
Hence, by \cref{le:89734556}, $\1_{\cE_{n^*}}\1_{\{T_{n^*}\ge k\}}\Delta M_k$ is integrable and \cref{prop:3241} implies that,  for all $k=n^*+1,\dots$,
\bas{
\E[\1_{\cE_{n^*}}\1_{\{T_{n^*}\ge k\}}\Delta M_k|\cF_k]=0.
}
By the same proposition the same is true when replacing $\Delta M_k$ by $\Delta \tilde M_k$ so that $\E[\1_{\cE_{n^*}}\1_{\{T_{n^*}\ge k\}} R_k'|\cF_{k-1}]=0$.

As in \cref{prop:7356} we
let for $k\in\N$, $r(k)=\lfloor k-(\log k)^2\rfloor$,
\bas{\label{eq783549061-2}
\tilde \cE'_k=\{\forall i\in\{1,\dots,d\}\exists \text{distinct }&\ell_1,\ell_2\in\{r(k),\dots, k-1\}\text{ with }\\
&(X^{(i)}(\theta^*,U_{\ell_1}))^2\wedge (X^{(i)}(\theta^*,U_{\ell_2}))^2\ge \delta\}
}
and
\bas{\label{eq783549061-4}
\tilde \cE''_k=\bigl\{\bigl\|(\1_{\{k+\ell<r(k)\}}\,X(\theta^*,U_{k+\ell}))_{\ell\in-\N_0}\bigr\|_{\ell_\varrho^d}\le 1\bigr\}.
}
We choose $n^{**}\in\N$ such that for all $k\ge n^{**}+1$ one has $r(k)>n^*$. By \cref{prop:7356}, almost surely, all but finitely many of the events $(\tilde \cE'_k)_{k\in\N}$ and $(\tilde \cE''_k)_{k\in\N}$  occur and we conclude that by the uniform boundedness of the $\bar\Pi$-matrices one has, in probability, on $\Omega_{n^*}$,
\bas{\label{eq124773}
\lim_{n\to\infty}\sqrt n\,\Bigl|\frac1{ n-n_0} \sum_{k=n_0+1}^{n} \1_{\{k\le n^{**}\}\cup  (\tilde \cE_k')^c\cup  (\tilde \cE_k'')^c}\, \bar\Pi[k,n] R_k' \Bigr|=0.
}
It remains to show that the contribution of all summands  $k>n^{**}$ for which the event $\tilde \cE'_k$ and $\tilde\cE_k''$ enter is also negligible.  By definition the events $(\tilde \cE'_k)_{k>n^{**}}$ and $(\tilde \cE''_k)_{k>n^{**}}$ are predictable so that for every fixed $n>n^{**}$ also $(\1_{\cE_{n^*}}\1_{\tilde \cE'_k}\1_{\tilde \cE''_k}\1_{\{T_{n^*}\ge k\}}\bar \Pi[k,n] R_k')_{k=n^{**}+1,\dots n}$ forms a sequence of  martingale differences. Together with the  
 uniform boundedness of $(\bar\Pi[k,n])$ we get that
\bas{\label{eq893547}
\E\Bigl[\Bigl| \sum_{k=n^{**}+1}^n &\1_{\cE_{n^*}}\1_{\tilde \cE'_{k}}\1_{\tilde \cE''_k}\1_{\{T_{n^*}\ge k\}}\bar\Pi[k,n] R_k'\Bigr|^2 \Big]\\
&=\sum_{k=n^{**}+1}^n \E\bigl[\bigl|\1_{\cE_{n^*}}\1_{\tilde \cE'_{k}}\1_{\tilde \cE''_k}\1_{\{T_{n^*}\ge k\}}\bar\Pi[k,n]R_k'\bigr|^2 \big]\\
&\les \,\sum_{k=n^{**}+1}^n \E\bigl[\1_{\cE_{n^*}}\1_{\tilde \cE'_{k}}\1_{\tilde \cE''_k}\1_{\{T_{n^*}\ge k\}}\bigl|R_k'\bigr|^2 \big]\text{ \ for }n\in\{n^{**}+1,\dots\}.
}
We will provide an estimate for the latter expectations.
Using the triangle inequality  we get that on $\{T_{n^*}\ge k\}$
\bas{\label{eq87241}
|R_k'|&\le |\Psi_{\theta_{k-1}}(\bX(k))- \Psi_{\theta_{k-1}}(\tilde \bX(k))| +| \Psi_{\theta_{k-1}}(\tilde \bX(k))-  \Psi_{\theta^*}(\tilde \bX(k))|\\
&\quad+  |\Psi_{\theta_{k-1}}(\bX(k-1))- \Psi_{\theta_{k-1}}(\tilde \bX(k-1))| +| \Psi_{\theta_{k-1}}(\tilde \bX(k-1))-  \Psi_{\theta^*}(\tilde \bX(k-1))|\\
&\quad + |g(\bX(k))-g(\tilde \bX(k))|+ |f(\theta_{k-1})|.
}
First note that the first, third, fifth and sixth term can be bounded by using \cref{le:523}, the Lipschitz continuity of $f$ on $B(\theta^*,R)$ and that $\|\bX(k-1)-\tilde \bX(k-1)\|_{\ell_\varrho^d}\le \mathrm{const}\, \|\bX(k)-\tilde \bX(k)\|_{\ell_\varrho^d}$ for a constant $\mathrm{const}$ only depending on $\alpha$ and $\beta$:
\bas{\label{eq87241-2}
\E\Bigl[\1_{\cE_{n^*}}&\1_{\tilde \cE'_{k}}\1_{\tilde \cE''_k}\1_{\{T_{n^*}\ge k\}}
\Bigl(|\Psi_{\theta_{k-1}}(\bX(k))- \Psi_{\theta_{k-1}}(\tilde \bX(k))| + |\Psi_{\theta_{k-1}}(\bX(k-1))- \Psi_{\theta_{k-1}}(\tilde \bX(k-1))| \\
&\qquad\qquad\qquad\qquad + |g(\bX(k))-g(\tilde \bX(k))|+ |f(\theta_{k-1})|\Bigr)^2
\Bigr]\\
&\les \E\bigl[\1_{\cE_{n^*}}\1_{\{T_{n^*}\ge k\}}  \bigl(\|\bX(k)-\tilde \bX(k)\|_{\ell_\varrho^d}^2+  |\theta_{k-1}-\theta^*|^2\bigr)\bigr] .
}
One has for every $k=n^*+1,\dots$ that
\bas{
\|\bX(k)-\tilde \bX(k)\|^2_{\ell_\varrho^d}&=\Bigl(\sum_{\ell\in\Z\cap (-\infty,k]} \varrho_{\ell-k}|X_\ell-\tilde X_\ell|\Bigr)^2\\
&\le \Bigl(\sum_{\ell\in\Z\cap (-\infty,n^*]} \varrho_{\ell-k}|X_\ell|+ \sum_{\ell\in\Z\cap (-\infty,n^*]} \varrho_{\ell-k}|\tilde X_\ell| + \sum_{\ell=n^*+1}^k \varrho_{\ell-k}|X_\ell-\tilde X_\ell|\Bigr)^2\\
&\le 2 \Bigl(\sum_{\ell\in\Z\cap (-\infty,n^*]} \varrho_{\ell-k}|X_\ell|\Bigr)^2+ 2 \|\varrho\|_{\ell_1}  \Bigl( \sum_{\ell\in\Z\cap (-\infty,n^*]} \varrho_{\ell-k}|\tilde X_\ell|^2 + \sum_{\ell=n^*+1}^k \varrho_{\ell-k}|X_\ell-\tilde X_\ell|^2\Bigr) \\
&\le 2 \beta^{k-n^*} \|\bX(n^*)\|_{\ell_\varrho^d}^2+ 2\|\varrho\|_{\ell_1} \Bigl( \sqrt\beta^{k-n^*}\sum_{\ell\in\Z\cap (-\infty,n^*]} \varrho_{\ell-n^*}|\tilde X_\ell|^2 + \sum_{\ell=n^*+1}^k \varrho_{\ell-k}|X_\ell-\tilde X_\ell|^2\Bigr) .
}
Using that $\1_{\cE_{n^*}} \|\bX(n^*)\|_{\ell_\varrho^d}\le \gamma_{n^*}^{-1/2}$, $\E[|\tilde X_\ell|^2]\le \cK^2$ for all $\ell\in\Z$ we conclude that
\bas{
\E\bigl[ \1_{\cE_{n^*}}\1_{\{T_{n^*}\ge k\}}\|\bX(k)-\tilde \bX(k)\|_{\ell_\varrho^d}^2\bigr]&\les \sqrt\beta^{k-n^*}+ \sum_{\ell=n^*+1}^k \varrho_{\ell-k} \,\E[ \1_{\cE_{n^*}}\1_{\{T_{n^*}\ge \ell\}} |X_\ell-\tilde X_\ell|^2] 
}
for $k\in\{n^*+1,\dots\}$. Next, we combine this estimate with 
 $\E[\1_{\cE_{n^*}}\1_{\{T_{n^*}\ge \ell\}} |X_\ell-\tilde X_\ell|^2|\cF_{\ell-1}]\le \cK^2 |\theta_{\ell-1}-\theta^*|^2 $ and 
$\E[\1_{\cE_{n^*}}\1_{\{T_{n^*}\ge \ell\}}|\theta_{\ell-1}-\theta^*|^2]\les \gamma_{\ell-1}$ both for $\ell\in\{n^*+1,\dots\}$
and get that
\bas{
\sum_{k=n^*+1}^n \E\bigl[ \1_{\cE_{n^*}}\1_{\{T_{n^*}\ge k\}}\|\bX(k)-\tilde \bX(k)\|_{\ell_\varrho^d}^2\bigr]&\les \sum_{k=n^*+1}^n\Bigl( \sqrt\beta^{k-n^*}+\sum_{\ell=n^*+1}^k \varrho_{\ell-k} \,\E[ \1_{\cE_{n^*}}\1_{\{T_{n^*}\ge \ell\}} |\theta_{\ell-1}-\theta^*|^2]\Bigr)\\
&\le (1-\sqrt\beta)^{-1}+ \sum_{\ell=n^*+1}^n  \sum_{k=\ell}^n\varrho_{\ell-k} \,\E[ \1_{\cE_{n^*}}\1_{\{T_{n^*}\ge \ell\}} |\theta_{\ell-1}-\theta^*|^2]\\
&\les 1+ \sum_{\ell=n^*+1}^n \gamma_{\ell-1} \text{ \ for }n\in\{n^*,\dots\}.
}
Assumption~\cref{eq:237856-3} implies that $\sum_{k=1}^n \gamma_k=o(\sqrt n)$ so that
\bas{\label{eq735433}
\lim_{n\to\infty} n^{-1/2} \sum_{k=n^{**}+1}^n \E\bigl[ \1_{\cE_{n^*}}\1_{\{T_{n^*}\ge k\}}(\|\bX(k)-\tilde \bX(k)\|_{\ell_\varrho^d}^2+|\theta_{k-1}-\theta^*|^2\bigr)\bigr]=0.
}
Together with \cref{eq87241-2} this shows that  the first, third, fifth and sixth term in~\cref{eq87241} are asymptotically negligible. It remains to analyse the contribution of the second and fourth term.
For this either  let
\bas{
\bY(k)=\tilde \bX(k)\text{ \ \ or \ \ } \bY(k)=\tilde \bX(k-1)
} 
for all $k\in\N$. Using \cref{le:3451} we conclude that
that for $k\in\{n^*+1,\dots\}$
\bas{\label{eq87241-3}
\E\bigl[&\1_{\cE_{n^*}}\1_{\tilde \cE'_{k}}\1_{\tilde \cE''_k}\1_{\{T_{n^*}\ge k\}}
| \Psi_{\theta_{k-1}}(\bY(k))-  \Psi_{\theta^*}(\bY(k))|^2
\bigr]\\
&\les \E\Bigl[ \1_{\cE_{n^*}}\1_{\tilde \cE'_{k}}\1_{\tilde \cE''_k}\1_{\{T_{n^*}\ge k\}} \sum_{i=1}^d \Bigl(1+ \bigl(\delta^{-1/2}+v^{*}(\bY^{(i)}(k))^{-1/2}\bigr)\bigl(\|\bY^{(i)}(k)\|_{\ell_{\varrho}}+\E[X^{(i)}(\theta^*,U)^2]^{1/2}\bigr)\Bigr)^2 \\
&\qquad\qquad\qquad\qquad \qquad \cdot \E[|X^{(i)}(\theta^*,U)-X^{(i)}(\theta',U)|^{2}]\big|_{\theta'=\theta_{k-1}} \Bigr]\\
&\les  \E\Bigl[ \1_{\cE_{n^*}}\1_{\tilde \cE'_{k}}\1_{\tilde \cE''_k}\1_{\{T_{n^*}\ge k\}} \sum_{i=1}^d \bigl(1+v^{*}(\bY^{(i)}(k))^{-1}\bigr)\bigl(\| \bY^{(i)}(k)\|^2_{\ell_{\varrho}^d}+1\bigr) \, |\theta_{k-1}-\theta^*|^2  \Bigr]\\
&\les  \E\Bigl[ \sum_{i=1}^d \bigl(1+v^{*}(\bY^{(i)}(k))^{-1}\bigr)\bigl(\| \bY^{(i)}(k)\|^2_{\ell_{\varrho}^d}+1\bigr) \, |\theta_{k-1}-\theta_{r(k)-1}|^2  \Bigr]\\
&\qquad + \E\Bigl[ \1_{\cE_{n^*}}\1_{\tilde \cE'_{k}}\1_{\tilde \cE''_k}\1_{\{T_{n^*}\ge k\}} \sum_{i=1}^d \bigl(1+v^{*}(\bY^{(i)}(k))^{-1}\bigr)\bigl(\| \bY^{(i)}(k)\|^2_{\ell_{\varrho}^d}+1\bigr) \, |\theta_{r(k)-1}-\theta^*|^2  \Bigr].
}
We note that for a constant $c$  only depending on $\alpha,\beta, \epsilon$ one has that, for all $k>n^{**}$, 
\bas{|\theta_{r(k)-1}-\theta_{k-1}|\le c (k-r(k))\gamma_{r(k)}\le c(1+(\log k)^2)\gamma_{r(k)}=o(\sqrt {\gamma_{k-1}})} as $k\to\infty$, where we used \cref{eq:683461} of \cref{prop:7356} in the latter step.
Moreover, by the second statement of \cref{le:345143-2} one has that for $k=n^{**}+1,\dots$
\bas{
\E\bigl[ & \bigl(1+v^{*}(\bY^{(i)}(k))^{-1}\bigr)\bigl(\| \bY^{(i)}(k)\|^2_{\ell_{\varrho}^d}+1\bigr)\bigr]  \les 1.}
 This proves that the first term on the right-hand side of \cref{eq87241-3} is of order $o(\gamma_{k-1})$.
 
Next, we analyse the second term. One has 
\bas{\label{eq:2354674}
\E\Bigl[ &\1_{\cE_{n^*}}\1_{\tilde \cE'_{k}}\1_{\tilde \cE''_k}\1_{\{T_{n^*}\ge k\}} \sum_{i=1}^d \bigl(1+v^{*}(\bY^{(i)}(k))^{-1}\bigr)\bigl(\| \bY^{(i)}(k)\|^2_{\ell_{\varrho}}+1\bigr) \, |\theta_{r(k)-1}-\theta^*|^2  \Bigr]\\
&\le \E\Bigl[ \1_{\cE_{n^*}}\1_{\{T_{n^*}\ge r(k)-1\}} \1_{\tilde \cE''_k}\E\Bigl[ \1_{\tilde \cE'_{k}}\sum_{i=1}^d \bigl(1+v^{*}(\bY^{(i)}(k))^{-1}\bigr)\bigl(\| \bY^{(i)}(k)\|^2_{\ell_{\varrho}}+1\bigr) \Big|\cF_{r(k)-1}\Bigr]\, |\theta_{r(k)-1}-\theta^*|^2  \Bigr].
}
Now we write ${\tilde \bY}^{(i)}(k)$ for the  vector that is constituted by all entries of $\bY^{(i)}(k)$ which belong to $X$-values with $U$-index in $-\N_0\cup\{r(k),\dots,k\}$ with the remaining components being set to zero. Moreover, we write  $\tilde {\tilde \bY}^{(i)}(k)$ for the remaining entries so that we have 
\bas{
\bY^{(i)}(k)={\tilde \bY}^{(i)}(k)+\tilde {\tilde \bY}^{(i)}(k).
}
By definition, ${\tilde \bY}^{(i)}(k)$ is independent of $\cF_{r(k)-1}=\sigma(U_1,\dots,U_{r(k)-1})$ and $\tilde {\tilde \bY}^{(i)}(k)$ is $\cF_{r(k)-1}$-measurable. We thus can estimate the inner conditional expectation by
\bas{
\E\Bigl[ \1_{\tilde \cE'_{k}}\sum_{i=1}^d \bigl(1+v^{*}(\bY^{(i)}(k))^{-1}\bigr)\bigl(\| \bY^{(i)}(k)\|^2_{\ell_{\varrho}}+1\bigr) \Big|\cF_{r(k)-1}\Bigr]
\\
\le  2 \E\Bigl[ \1_{\tilde \cE'_{k}}\sum_{i=1}^d \bigl(1+v^{*}(\tilde \bY^{(i)}(k))^{-1}\bigr)\bigl(\| \tilde \bY^{(i)}(k)\|^2_{\ell_{\varrho}}+1\bigr) \Bigr]\\
+2 \E\Bigl[ \1_{\tilde \cE'_{k}}\sum_{i=1}^d \bigl(1+v^{*}(\tilde \bY^{(i)}(k))^{-1}\bigr)\Bigr]\, \| \tilde{ \tilde \bY}^{(i)}(k)\|^2_{\ell_{\varrho}}
}
By \cref{le:345143-2}, both expectations are uniformly bounded over all $k\in\{n^{**}+1,\dots\}$ so that we get with \cref{eq:2354674} that
\bas{
\E\Bigl[ &\1_{\cE_{n^*}}\1_{\tilde \cE'_{k}}\1_{\tilde \cE''_k}\1_{\{T_{n^*}\ge k\}} \sum_{i=1}^d \bigl(1+v^{*}(\bY^{(i)}(k))^{-1}\bigr)\bigl(\| \bY^{(i)}(k)\|^2_{\ell_{\varrho}}+1\bigr) \, |\theta_{r(k)-1}-\theta^*|^2  \Bigr]\\
&\les \E\Bigl[ \1_{\cE_{n^*}}\1_{\tilde \cE''_k}\1_{\{T_{n^*}\ge r(k)-1\}}(1+  \| \tilde{ \tilde \bY}(k)\|^2_{\ell^d_{\varrho}}) \, |\theta_{r(k)-1}-\theta^*|^2  \Bigr]  \\
&\les \E\Bigl[ \1_{\cE_{n^*}}\1_{\{T_{n^*}\ge r(k)-1\}} \, |\theta_{r(k)-1}-\theta^*|^2  \Bigr] \les \gamma_{r(k)-1}\les \gamma_{k-1} \text{ \ for }k\in\{n^{**}+1,\dots\} .
}
Thus we get with \cref{eq87241-3} that the second and fourth term on the right-hand side of \cref{eq87241} are also negligible.
This finishes the proof of the negligibility of the $R_k'$-term. Hence, the proof is complete.

\subsubsection*{Acknowledgements}
This work has been partially funded by the National Science Foundation of China (NSFC) under grant number W2531010.
Moreover, this work has been partially funded by the Deutsche Forschungsgemeinschaft (DFG, German Research Foundation)
under Germany's Excellence Strategy EXC 2044--390685587, Mathematics Münster: Dynamics--Geometry--Structure.
Furthermore, this work has been partially funded by the European Union (ERC, MONTECARLO, 101045811).
The views and the opinions expressed in this work are however those of the authors only and do
not necessarily reflect those of the European Union or the European Research Council (ERC).
Neither the European Union nor the granting authority can be held responsible for them.

\bibliographystyle{plain}
\bibliography{bibfile}

\end{document}